\numberwithin{equation}{section}
\theoremstyle{plain}
\newtheorem{theorem}[equation]{Theorem}
\newtheorem{proposition}[equation]{Proposition}
\newtheorem{lemma}[equation]{Lemma} 
\newtheorem{corollary}[equation]{Corollary}
\theoremstyle{definition}
\newtheorem{definition}[equation]{Definition}
\newtheorem{example}[equation]{Example}
\newtheorem{construction}[equation]{Construction}
\theoremstyle{remark}
\newtheorem{remark}[equation]{Remark} 
\newtheorem*{ack}{Acknowledgements}
\newcommand*{\intref}[2]{\def\tmp{#1}\ifx\tmp\empty\hyperref[#2]{\ref*{#2}}\else\hyperref[#2]{#1~\ref*{#2}}\fi}
\newcommand{\test}{A}
\newcommand{\cosupp}{\operatorname{cosupp}}
\renewcommand{\dim}{\operatorname{dim}}
\newcommand{\dcat}[1]{{\mathbf D}(#1)}
\newcommand{\dbcat}[1]{{\mathbf D}^{\mathrm b}(\mathrm{mod}\, #1)}
\newcommand{\Ext}{\operatorname{Ext}}
\newcommand{\fdim}{\operatorname{flat\,dim}}
\renewcommand{\ge}{\geqslant}
\newcommand{\GProj}{\operatorname{GProj}}
\newcommand{\uGProj}{\underline{\GProj}}
\newcommand{\uGproj}{\underline{\operatorname{Gproj}}}
\newcommand{\Hom}{\operatorname{Hom}}
\newcommand{\hysupp}{V^h}
\renewcommand{\Im}{\operatorname{Im}}
\newcommand{\Ker}{\operatorname{Ker}}
\renewcommand{\le}{\leqslant}
\renewcommand{\mod}{\operatorname{mod}}
\newcommand{\Mod}{\operatorname{Mod}}
\newcommand{\pos}[2][k]{#1\llbracket{#2}\rrbracket}
\newcommand{\Proj}{\operatorname{Proj}}
\newcommand{\rank}{\operatorname{rank}}
\newcommand{\RHom}{\operatorname{{\mathbf R}Hom}}
\newcommand{\StMod}{\operatorname{StMod}}
\newcommand{\stmod}{\operatorname{stmod}}
\newcommand{\supp}{\operatorname{supp}}
\newcommand{\Thick}{\operatorname{Thick}}
\newcommand{\Tor}{\operatorname{Tor}}
\newcommand{\col}{\colon}
\newcommand{\lotimes}{\otimes^{\mathbf L}}
\renewcommand{\setminus}{\smallsetminus}
\newcommand{\xra}{\xrightarrow}
\newcommand{\lra}{\longrightarrow}
\newcommand{\bikp}{Benson/Iyengar/Krause/Pevtsova}
\newcommand{\mcE}{E}
\newcommand{\mcV}{\mathcal{V}}
\newcommand{\bfC}{\mathbf C}
\newcommand{\bbA}{\mathbb A} 
\newcommand{\bbG}{\mathbb G}
\newcommand{\bbP}{\mathbb P}
\newcommand{\bbZ}{\mathbb Z} 
\newcommand{\bsa}{\boldsymbol{a}} 
\newcommand{\bsb}{\boldsymbol{b}} 
\newcommand{\bsf}{\boldsymbol{f}}
\newcommand{\bss}{\boldsymbol{s}} 
\newcommand{\bsx}{\boldsymbol{x}}
\newcommand{\fa}{\mathfrak{a}}
\newcommand{\fm}{\mathfrak{m}} 
\newcommand{\fn}{\mathfrak{n}} 
\newcommand{\fp}{\mathfrak{p}}
\newcommand{\fq}{\mathfrak{q}}
\newcommand{\si}{\sigma}
\newcommand{\pisupp}{\pi\text{-}\supp}
\newcommand{\picosupp}{\pi\text{-}\cosupp}
\newcommand{{\sgs}}{\mathrm{sgs}}
\newcommand{\ve}{\varepsilon}
\title[Rank varieties and $\pi$-points]
{Rank varieties and $\pi$-points for\\ elementary supergroup schemes}
\author[BIK$\Pi$]{Dave Benson, Srikanth B. Iyengar, \\ Henning Krause, and Julia Pevtsova}
\address{Dave Benson \\ 
Institute of Mathematics\\ 
University of Aberdeen\\ 
King's College\\ 
Aberdeen AB24 3UE\\ 
Scotland U.K.}
\address{Srikanth B. Iyengar\\ 
Department of Mathematics\\
University of Utah\\ 
Salt Lake City\\
UT 84112 \\
U.S.A.}
\address{Henning Krause\\ 
Fakult\"at f\"ur Mathematik\\ 
Universit\"at Bielefeld\\ 
33501 Bielefeld\\ 
Germany.}
\address{Julia Pevtsova\\
Department of Mathematics\\
University of Washington\\
Seattle, WA 98195\\
U.S.A.}
\begin{document}

\dedicatory{To Jon F. Carlson on his 80th birthday.}

\begin{abstract}
We develop a support theory for elementary supergroup schemes, over a field of positive characteristic $p\ge 3$, starting  with a definition of a $\pi$-point generalising cyclic shifted subgroups of  Carlson for elementary abelian groups and $\pi$-points of Friedlander and Pevtsova for finite group schemes. These are defined in terms of  maps from the graded algebra $k[t,\tau]/(t^p-\tau^2)$, where $t$ has even degree and $\tau$ has odd degree. The strength of the theory is demonstrated by classifying the parity change invariant localising subcategories  of the stable module category of an elementary supergroup scheme.
\end{abstract}

\keywords{complete intersection, $\pi$-point, support, stratification of module category,  elementary supergroup scheme} 

\subjclass[2020]{18G65 (primary); 18G80, 13E10, 16W55, 16T05}

\date{August 6, 2020}

\maketitle

\setcounter{tocdepth}{1}
\tableofcontents

\section*{Introduction}

Carlson~\cite{Carlson:1983a}  introduced two notions of variety for a finitely generated module over an elementary abelian $p$-group. One, the rank variety, is based on restrictions to cyclic shifted subgroups, while the other is a cohomological support variety. This theory was generalised to infinitely generated modules by  Benson, Carlson and Rickard~\cite{Benson/Carlson/Rickard:1996a} by using cyclic shifted subgroups defined over extension fields where enough generic points exist. 

The notion of rank variety was put in the more general context of a finite group scheme $G$ over a field $k$  by Friedlander and Pevtsova \cite{Friedlander/Pevtsova:2007a}, through the theory of $\pi$-points. A $\pi$-point is a flat algebra homomorphism from $K[t]/(t^p)$ to $KG_K$, where $K$ is an extension field of $k$. There is an equivalence relation on $\pi$-points, and in the case of an elementary abelian group  there is exactly one shifted subgroup in each equivalence class over a large enough field. 

In a parallel development a theory of support varieties based on cohomology, and applicable in a rather broad context, was developed in \cite{Benson/Iyengar/Krause:2009a, Benson/Iyengar/Krause:2011a}. Combining those ideas with the theory of $\pi$-points eventually led to a classification of the localising, and colocalising, subcategories of the stable module category of a finite group scheme by the current authors~\cite{Benson/Iyengar/Krause/Pevtsova:2018a}.

In~\cite{Benson/Iyengar/Krause/Pevtsova:bikp5} we began a program to extend all these results to the world of supergroup schemes. In that work we identified a family of elementary supergroups schemes and proved that the projectivity of modules over a unipotent supergroup scheme can be detected by its restrictions to the elementary ones, possibly defined over extensions fields. This is in  analogy with Chouinard's theorem for finite groups. 

In this paper we develop a theory of $\pi$-points for the elementary supergroup schemes $\mcE$ introduced  in~\cite{Benson/Iyengar/Krause/Pevtsova:bikp5}, and classify the localising subcategories of its stable module category. This feeds into the proof of a similar classification for finite unipotent supergroup schemes, presented in \cite{Benson/Iyengar/Krause/Pevtsova:bikp9}. It transpires that  rather than flat maps from $K[t]/(t^p)$, we have to consider the $K$-algebra
\[
\test_K=K[t,\tau]/(\tau^2-t^p)
\]
where $t$ is in even degree and $\tau$ is in odd degree, and maps of finite flat dimension 
\[ 
\alpha\colon\test_K\to K\mcE_K \,.
\] 
The basic new result in our work is that $\pi$-points detect projectivity:  a $k\mcE$-module $M$ is projective if, and only if, for each $\pi$-point $\alpha$ as above the restriction of the $K\mcE_K$-module $K\otimes_kM$ to $\test_K$ has finite flat dimension; a corresponding statement involving $\Hom_k(K,M)$, rather than $K\otimes_kM$, also holds. 

From the point of view of commutative algebra, the group algebra $k\mcE$ is a complete intersection, and for such rings  Avramov~\cite{Avramov:1989a} has developed a theory of support sets for modules, as yet another extension of Carlson's work. Our proof of the detection theorem for $\mcE$ goes by relating $\pi$-points to support sets over $k\mcE$. However, we need a version of the theory that applies also to infinite dimensional modules. This is presented in the Appendix.

With the detection theorem on hand, we put an equivalence relation on
$\pi$-points, analogous to the one
in~\cite{Friedlander/Pevtsova:2007a}, and exhibit an explicit set of
representatives of these equivalence classes, up to linear multiples,
analogous to the cyclic shifted subgroup approach in
\cite{Benson/Carlson/Rickard:1996a,Carlson:1983a}. These form a
projective space over $k$, and give rise to various notions of support
for modules over $k\mcE$. This allows us to classify the localising
subcategories of  the stable category of $k\mcE$-modules. For a finite
dimensional $k\mcE$-module $M$ one also has a rank variety, in exact analogy with Carlson's theory of rank varieties for elementary abelian groups. This variety is determined by a rank condition on an explicitly defined matrix, and in particular it is a closed subset.

\subsection*{Outline of the paper}
The basic definitions concerning supergroup schemes, including the structure of the elementary ones, is recalled in \intref{Section}{se:prelim}.
In \intref{Section}{se:supportsets} we record the desired statements concerning support sets of modules over $k\mcE$, by specialising results for general complete intersections established in \intref{Appendix}{se:appendix}. The link to $\pi$-points, and a proof of the detection theorem discussed above, is presented in \intref{Section}{se:pi-points}. The proofs again require quite substantial input from the homological theory of complete intersection rings, and also  basic facts about the representation theory of the algebra $\test$ recalled  in \intref{Section}{se:test}.  From this point on, the narrative unfolds in the expected way: Rank varieties for finite dimensional modules are introduced in \intref{Section}{se:rank}, leading to an explicit method for computing them. The equivalence relation on $\pi$-points is discussed in \intref{Section}{se:pi-and-cohomology}. \intref{Section}{se:support-and-cosupport} brings in the cohomological notions of support and cosupport, culminating with the classification results.

\begin{ack}
  It is a pleasure to acknowledge the support provided by the American
  Institute of Mathematics in San Jose, California, through their
  ``Research in Squares" program.  We also acknowledge the National Science
  Foundation under Grant No.\ DMS-1440140 which supported three of the
  authors (DB, SBI, JP) while they were in residence at the Mathematical Sciences
  Research Institute in Berkeley, California, during the Spring 2018
  semester.  Finally, two of the authors (DB, JP) are grateful for hospitality
  provided by City, University of London.

  SBI was partly supported by NSF grants DMS-1700985 and DMS-2001368.
  JP was partly supported by NSF grants DMS-1501146,
  DMS-1901854, and a Brian and Tiffinie Pang faculty fellowship.
\end{ack}

\section{Elementary supergroup schemes}
\label{se:prelim}
Throughout $k$ will be a field of positive characteristic $p\ge 3$. A \emph{superalgebra} will mean a $\bbZ/2$-graded algebra, and a \emph{graded} module over such an algebra will be assumed to be a $\bbZ/2$-graded left module. The category of graded modules over a superalgebra $A$ is denoted $\Mod(A)$; these are allowed to be infinitely generated. The full subcategory of finitely generated ones is denoted $\mod(A)$. When $M$ is a graded $A$-module, $\Pi M$ denotes the module with the zero and one components swapped. For $m\in M$, the action of an element $a$ on $\Pi m$ is given by 
\[
a\cdot \Pi m \colonequals (-1)^{|a|}\Pi(am) \quad\text{in $\Pi M$.}
\]
A superalgebra $A$ is \emph{commutative} if  $yx=(-1)^{|x||y|}xy$ for all $x,y$ in $A$.

An \emph{affine supergroup scheme} over $k$ is a covariant  functor from  commutative superalgebras   to groups, whose underlying functor to sets is representable.  If $G$ is a supergroup scheme  its  \emph{coordinate ring} $k[G]$ is the representing object. By applying Yoneda's lemma to the group multiplication and inverse maps, it is a commutative Hopf superalgebra. This gives a contravariant equivalence of categories between affine supergroup schemes and commutative Hopf superalgebras.

A \emph{finite} supergroup scheme $G$ is an affine supergroup scheme
whose coordinate ring is finite dimensional. In this case, the dual
$kG=\Hom_k(k[G],k)$ is a finite dimensional cocommutative Hopf
superalgebra called the \emph{group ring} of $G$. This gives a
covariant equivalence of categories between finite supergroup schemes
and finite dimensional cocommutative Hopf superalgebras. We denote by
$\StMod(kG)$ the \emph{stable module category} which is obtained from
$\Mod(kG)$ by annihilating all projective modules. Note that this
carries the structure of a triangulated category since $kG$ is a
self-injective algebra. We write $\stmod(kG)$ for the full subcategory
of finite dimensional modules.

Following~\cite{Benson/Iyengar/Krause/Pevtsova:bikp5} we say that a finite supergroup scheme $E$ over $k$ is \emph{elementary} if it is isomorphic to a quotient of $E_{m,n}^-\times(\bbZ/p)^r$, where $E_{m,n}^-$ is the Witt elementary, explicitly described in~\cite[Definition~3.3]{Benson/Iyengar/Krause/Pevtsova:bikp5}. The main theorem of that paper states that if $G$ is a unipotent finite supergroup scheme over $k$ then a $kG$-module $M$ is projective  if and only if, for all extension fields $K$, and all elementary sub-supergroup schemes $\mcE$ of $G_K$, the module $M_K=K\otimes_kM$ is a projective $K\mcE$-module. It also gives a similar condition for the nilpotence of an element of cohomology.

We will  be concerned \emph{only} with the algebra structure of $kE$
and the existence of a comultiplicaton; the explicit formula for the latter plays no role. Nevertheless, here is a brief description of the supergroup schemes $E_{m,n}^-$ and their finite quotients.

Let $\bbG_{a(n)}$ be the $n$th Frobenius kernel of the additive group scheme $\bbG_a$. We denote by $\bbG_a^-$ the supergroup scheme with the group algebra $\Lambda^*(\sigma) \cong k[\sigma]/\sigma^2$ with generator $\sigma$ in odd degree and primitive. Let $W_m$ be the affine group scheme of Witt vectors of length $m$, and let $W_{m,n}$ be the $n$th Frobenius kernel of $W_m$. Hence, $W_{m,n}$ is a finite connected group scheme of height $n$. Then the finite group scheme $E_{m,n}$ is defined as a quotient 
\[
E_{m,n} : = W_{m,n}/W_{m-1,n-1}\,.
\]  
The \emph{Witt elementary} super group $E_{m,n}^-$ is a finite supergroup scheme determined uniquely by the following two properties: 
\begin{enumerate} 
\item It fits into an extension $1 \to E_{m,n} \to E_{m,n}^- \to \bbG_a^- \to 1$,
\item $kE_{m,n}^- \cong k[s_1,\dots,s_{n-1},s_n,\si]/(s_1^p, \dots,s_{n-1}^p,s_n^{p^m},\sigma^2-s_n^{p})$. 
\end{enumerate} 
See \cite[\S 8]{Benson/Iyengar/Krause/Pevtsova:bikp5} for details.

The quotients of $E_{m,n}^-\times(\bbZ/p)^r$ can be completely classified using the theory of Dieudonn\'e modules and, up to isomorphism, fall into one of the following  classes:
\begin{enumerate}[{\quad\rm(i)}]
\item
$\bbG_{a(n)} \times (\bbZ/p)^{\times s}$ with $n,s\ge 0$,
\item
$\bbG_{a(n)} \times \bbG_a^- \times (\bbZ/p)^{\times s}$ with $n,s\ge 0$,
\item
$E_{m,n}^- \times (\bbZ/p)^{\times s}$ with $m\ge 1$, $n\ge 2$, $s\ge 0$, or
\item
$E_{m,n,\mu}^-\times (\bbZ/p)^{\times s}$ with $m,n\ge 1$, $0\ne\mu\in k$ and $s\ge 0$.
\end{enumerate}
The last family involves supergroups $E^-_{m,n,\mu}$ which are
quotients of $E_{m+1, n+1}^-$ but have the same group algebra structure
as $E_{m+1, n}^-$.  In the present work we are only concerned with the
algebra structure of $kE$, so we do not distinguish between cases
(iii) and (iv). Therefore, ignoring the comultiplication, the group
algebra of interest is one of the following:
\[
 kE \cong 
 \begin{cases} k[s_1,\dots,s_n]/(s_1^p,\dots,s_n^p) \\
k[s_1,\dots,s_n,\sigma]/(s_1^p,\dots,s_{n}^p, \si^2) \\
k[s_1,\dots,s_n,\sigma]/(s_1^p,\dots,s_{n-1}^p,s_n^{p^{m}},s_n^p-\si^2)  \quad \text{with $m\ge 2$.}
\end{cases} 
\]
where the $s_i$ have  degree $0$ and $\sigma$ has  degree $1$.  The first case occurs when the supergroup scheme is a group scheme. For these the representation theory has been analysed in detail in \cite{Benson/Carlson/Rickard:1996a, Benson/Iyengar/Krause:2011b,  Carlson:1983a,  Friedlander/Pevtsova:2007a}. 

Our goal is to write down suitable analogues of the main theorems of
these papers for elementary supergroup schemes  of the second and
third form. Among these, the third case is the one that presents and
most challenges and is the focus of the bulk of this work. The second case is discussed in the last \intref{Section}{se:others}.

Henceforth $k$ will be, as before, a field of positive characteristic $p\ge 3$ and $\mcE$ the supergroup scheme with group algebra
\begin{equation}
\label{eq:kE}
k\mcE\colonequals \frac{k[s_1,\dots,s_n, \sigma]}{(s_1^p, \dots,s_{n-1}^p, s_n^{p^m}, s_n^p-\sigma^2)}
\end{equation}
with $|s_i|=0$ and $|\sigma|=1$.

\section{Support sets}
\label{se:supportsets}
In this section we describe supports sets of  $k\mcE$-modules, following the general theory for complete intersections developed in the Appendix. 

We write $\bbA^n(k)$ for the $n$-tuples of elements, $
(a_1,\dots,a_n)$, of $k$, and $\bbP^{n-1}(k)$ for the non-zero
elements of $\bbA^n(k)$ modulo scalar multiplication. The image of
$(a_1,\dots,a_n)$ in $\bbP^{n-1}(k)$ is denoted $[a_1,\dots,a_n]$.

For a singly or doubly graded ring $R$ we write $\Proj R$ for the set
of homogeneous prime ideals other than the maximal ideal of non-zero
degree elements, topologised with the Zariski topology.

\begin{construction}
\label{con:powerseries}
Consider the  ring of formal power series
\[
P \colonequals \pos{\bss, \sigma} \colonequals \pos{s_1,\dots,s_n, \sigma}
\]
in  indeterminates $\bss, \sigma$. This is a noetherian local ring and $\fm\colonequals (\bss,\sigma)$ is its maximal ideal. In the ring $P$ consider the ideal
\begin{equation}
\label{eq:mI}
I=(s_1^p,\dots,s_{n-1}^{p},  s_n^{p^m}, s_n^p - \sigma^2)\,.
\end{equation}
As $k$-algebras one has  $k\mcE = P/I$.  Moreover $s_1^p,\dots,s_{n-1}^{p},  s_n^{p^m}, s_n^p - \sigma^2$ is a regular sequence in $P$.  This can be verified directly from the definition, recalled in \intref{Appendix}{se:appendix}. Alternatively, one observes that this is a sequence of length $n+1$ in the local ring $P$ that is regular of Krull dimension $n+1$, and that the quotient ring, namely $k\mcE$, has Krull dimension zero; see \cite[Theorem~2.1.2]{Bruns/Herzog:1998a}. 
 
The upshot is that $k\mcE$ is complete intersection. In particular the constructions and results presented in \intref{Appendix}{se:appendix} apply to $k\mcE$.  One remark is in order before we can march on.
\end{construction}

\begin{remark}
\label{re:projectives-ungraded}
Recall that $k\mcE$ is a superalgebra. A  graded $k\mcE$-module $M$ is projective if and only if it is projective when viewed as an (ungraded) $k\mcE$-module.

One way to verify this is to note that $k\mcE$ is an artinian local ring, and hence projective modules (graded or not) are free. It is clear that being free does not  depend on the grading.
\end{remark}

\begin{definition}
Fix  $\bsa \colonequals [a_1,\dots,a_{n+1}]$ in $\bbP^{n}(k)$ and consider the hypersurface  $P_{\bsa}\colonequals P/(h_{\bsa})$, defined by the polynomial
\begin{equation}
\label{eq:hypersurface}
h_{\bsa} (\bss,\sigma)\colonequals a_1s_1^p + \cdots a_{n-1}s_{n-1}^p + a_ns_n^{p^m} + a_{n+1}(s_n^p-\sigma^2)\,.
\end{equation}
Since $h_{\bsa}(\bss,\sigma)$ is in $I$ there is a surjection  $\beta_{\bsa}\colon P_{\bsa}  \to k\mcE$. Given any $k\mcE$-module $M$  write $\beta^*_{\bsa}(M)$ for its restriction to $P_{\bsa}$, and set
\begin{equation}
\label{eq:Av-variety}
\hysupp_\mcE(M)\colonequals \{\bsa\in \bbP^{n}(k) \mid \fdim \beta_{\bsa}^*(M) =\infty \}\,.
\end{equation}
This is the \emph{support set} of $M$ defined through hypersurfaces, following Avramov~\cite{Avramov:1989a}.
\end{definition}

\begin{definition}
\label{de:base-change}
Given a $k\mcE$-module $M$ and an extension field $K$ of $k$ set
\[
M_K \colonequals K\otimes_k M \quad\text{and}\quad M^K \colonequals \Hom_k(K,M)
\]
viewed as $K\mcE$-modules in the natural way. The tensor product and Hom are taken in the graded category,  with $K$ in degree $0$. When  $K$ is a finite extension of $k$,  the $K\mcE$-modules $M_K$ and $M^K$ are isomorphic.

Observe that $M_K$ and $M^K$ can also be realised as
\[
M_K = K\mcE\otimes_{k\mcE} M \quad\text{and} \quad M^K = \Hom_{k\mcE}(K\mcE,M)\,.
\]
This reconciles the notation here with that introduced in \intref{Definition}{de:extending-fields}.
\end{definition}

\begin{theorem}
\label{th:support-sets}
Let $M$ be a $k\mcE$-module. The following conditions are equivalent:
\begin{enumerate}[\quad\rm(1)]
\item
$M$ is projective;
\item
$\hysupp_{\mcE_K}(M_K)=\varnothing$ for any field extension $K$ of $k$;
\item
$\hysupp_{\mcE_K}(M^K)=\varnothing$ for any field extension $K$ of $k$.
\end{enumerate}
In \emph{(2)} and \emph{(3)} it suffices to take  $K=k$ if $\rank_kM$ is finite and $k$ is algebraically closed. In any case it suffices to take for $K$ an algebraically closed extension of $k$ of transcendence degree at least $n$.
\end{theorem}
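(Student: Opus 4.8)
The plan is to deduce the theorem from the general theory of support sets over complete intersection rings established in \intref{Appendix}{se:appendix}, specialised to the ring $R\colonequals k\mcE$. By \intref{Construction}{con:powerseries} we have $k\mcE = P/I$ with $P = \pos{\bss,\sigma}$ regular local of Krull dimension $n+1$ and $I$ generated by the regular sequence $s_1^p,\dots,s_{n-1}^p, s_n^{p^m}, s_n^p-\sigma^2$; thus $k\mcE$ is a complete intersection of codimension $c = n+1$, and by \intref{Remark}{re:projectives-ungraded} we may forget the $\bbZ/2$-grading throughout, so the superalgebra structure plays no role. Given $\bsa = [a_1,\dots,a_{n+1}]$ in $\bbP^{n}(k)$, the hypersurface $P_{\bsa} = P/(h_{\bsa})$ of \eqref{eq:hypersurface} is exactly the one attached to $\bsa$ by the general theory, so $\hysupp_{\mcE}(M)$ coincides with the hypersurface support set of the Appendix; in particular the parameter space $\bbP^{n}$ has dimension $c-1 = n$, which is where the transcendence-degree bound will come from.

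The implications \emph{(1)} $\Rightarrow$ \emph{(2)} and \emph{(1)} $\Rightarrow$ \emph{(3)} are immediate. As $k\mcE$ is artinian local, a projective module is free, so $M_K$ and $M^K$ are free over $K\mcE_K$. After a linear change of the generators of $I$ we may take $h_{\bsa}$ to be one of them; the remaining $n$ then form a regular sequence in $P_{\bsa}$ with quotient $k\mcE$, so $k\mcE$ admits a finite free resolution over $P_{\bsa}$. Hence $\fdim\beta^*_{\bsa}(N) < \infty$ for every free $k\mcE$-module $N$, in particular for $M_K$ and $M^K$; so both support sets are empty.

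For the converse implications we invoke the main detection theorem of \intref{Appendix}{se:appendix}: over a complete intersection, $M$ is projective if and only if the hypersurface support set of $M_K$, equivalently of $M^K$, is empty for every field extension $K$. Two sharpenings remain for the ring at hand. When $\rank_k M$ is finite and $k = \bar k$, the classical finitely generated theory of \cite{Avramov:1989a} identifies $\hysupp_{\mcE}(M)$ with the set of $k$-points of a Zariski-closed subvariety of $\bbP^{n}$ defined already over $k$; over an algebraically closed field this point set is empty only if the variety is, and then Avramov's theorem forces $M$ projective, so $K = k$ suffices. In general, if $M$ is not projective, then over $\bar k$ the support is a nonempty closed subset $V \subseteq \bbP^{n}_{\bar k}$, and the hypersurface through a generic point of a component of $V$ witnesses non-projectivity; such a generic point is rational over $K$ once $K$ is algebraically closed of transcendence degree at least $\dim V$ over $k$, and $\dim V \le n$. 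This is the generic-point argument of \cite{Benson/Carlson/Rickard:1996a}, and it produces a single $K$ of the stated kind.

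The step I expect to be the main obstacle is matching the Appendix's formulation --- typically phrased in terms of the cohomology operators of the complete intersection and $\Proj$ of the polynomial ring they generate --- with the concrete hypersurface-by-hypersurface set in \eqref{eq:Av-variety}, and carrying both base-change versions along. The $M^K$ version needs particular care, since $\Hom_k(K,-)$ does not commute with colimits: one first handles finite subextensions, where $M_K \cong M^K$ by \intref{Definition}{de:base-change}, and then passes to the limit using closedness of the cohomological support, exactly as in the Appendix.
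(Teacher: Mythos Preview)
Your core strategy is exactly the paper's: reduce to the Appendix results. The paper's proof is two sentences --- note that projective is equivalent to finite projective (hence flat) dimension because $k\mcE$ is finite-dimensional commutative, then invoke \intref{Theorem}{th:Av} for the finitely generated case over an algebraically closed field and \intref{Theorem}{th:appendix} for the general case including the transcendence-degree bound.

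Your elaborations are unnecessary and your anticipated obstacle does not exist. The Appendix in this paper defines $\hysupp_R$ directly in terms of hypersurfaces, not via cohomology operators, so there is nothing to match; and \intref{Theorem}{th:appendix} already handles both $M_K$ and $M^K$ with the transcendence bound $c-1=n$ built in, via \intref{Lemma}{le:ext-tor} and an induction on codimension using the cohomology-operator exact sequence \eqref{eq:chi-les} --- not by the limit-over-finite-subextensions argument you sketch. In particular there is no need to prove $(1)\Rightarrow(2),(3)$ by hand, nor to re-derive the generic-point bound from \cite{Benson/Carlson/Rickard:1996a}: all of this is packaged in \intref{Theorem}{th:appendix}. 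The only genuine input beyond citing the Appendix is the one-line observation that over the artinian local ring $k\mcE$ a module is projective iff it has finite flat dimension, and that every $k\mcE$-module has finite Loewy length so the hypothesis of \intref{Theorem}{th:appendix} is met.
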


\begin{proof}
Observe that $M$ is projective if and only if it has finite projective dimension; this holds, for example, because  $k\mcE$ is finite dimensional and commutative.  Thus putting together \intref{Theorem}{th:Av} and \intref{Theorem}{th:appendix} yields the desired result.
\end{proof}

\subsection*{Cohomology of $k\mcE$}
The cohomology ring of $k\mcE$ was recorded in  \cite[Theorems~5.10 and~5.11]{Benson/Iyengar/Krause/Pevtsova:bikp5}. It takes the form
\begin{equation}
\label{eq:Ecohomology}
H^{*,*}(\mcE,k)= \Lambda(u_1,\dots,u_n)   \otimes_k k[x_1,\dots,x_n,\zeta]
\end{equation}
where the degrees are $|u_i|=(1,0)$, and $|x_i|=(2,0)$, $|\zeta|=(1,1)$. Here, the first degree is cohomological and the second comes from the $\bbZ/2$-grading on $k\mcE$. The numbering is chosen so that $x_i$ and $u_i$ restrict to zero on the subalgebra generated by $s_j$ if $i\ne j$, and $\zeta$ has non-zero restriction to the subalgebra generated by $\sigma$.

The nil radical of $H^{*,*}(\mcE,k)$ is generated by $ u_1,\dots,u_n$. The quotient modulo this ideal is
\[
H^{*,*}(\mcE,k)/\sqrt{0} = k[\bsx,\zeta] \quad\text{where}\quad  \bsx=x_1,\dots,x_n\,.
\]
This ring  has the property that the internal degree of a non-zero element is congruent to its cohomological degree modulo two.

It follows that if we take the $\Proj$ of  this ring as a doubly graded ring or as a singly graded ring by ignoring the internal degree, we get the same homogeneous prime ideals, that is to say, the natural map is a homeomorphism:
\[
\Proj H^{*,*}(E,k) \xra{\ \cong\ } \Proj H^{*}(E,k)\,.
\]
 Moreover the inclusion $k[\bsx,\zeta^2]\subset k[\bsx,\zeta]$ induces an isomorphism
 \[
\Proj k[\bsx,\zeta]\xra{\ \cong \ } \Proj k[\bsx,\zeta^2]\,.
 \]
 Thus $\Proj H^{*,*}(\mcE,k)$ identifies with the standard projective space. 
 
\subsection*{Cohomology of $P_{\bsa}$}
Fix $\bsa=[a_1,\dots,a_{n+1}] \in \bbP^n(k)$.  The cohomology of the hypersurface $P_{\bsa}$ is
\begin{equation}
\label{eq:Pacohomology}
\Ext^*_{P_{\bsa}}(k,k) =
\begin{cases}
\Lambda(v_1,\dots,v_n)\otimes_k k[\eta]  & \text{if $a_{n+1}\ne 0$} \\
\Lambda(v_1,\dots,v_n)\otimes_k k[y] \otimes \Lambda(\eta) & \text{if $a_{n+1}=0$}
\end{cases}
\end{equation}
where the $v_i$ and $\eta$ have cohomological degree one, and $y$ is of degree two; see, for example, ~\cite[Theorem~5]{Sjodin:1976a}. The map $\beta_{\bsa}\colon P_{\bsa}\to k\mcE$ induces a map of $k$-algebras
\[
\beta^*_{\bsa}\colonequals \Ext^*_{\beta_{\bsa}}(k,k)\colon \Ext_{\mcE}^{*}(k,k)  \lra \Ext^{*}_{P_{\bsa}}(k,k)\,.
\]
In the notation of \eqref{eq:Ecohomology} and \eqref{eq:Pacohomology}, and for $1\le i\le n$, this map is given by
\begin{align*}
&\beta^*_{\bsa}(u_i) = v_i  \\
&\beta^*_{\bsa}(x_i) = 
\begin{cases}
\displaystyle{\frac{a_i}{a_{n+1}}} \eta^2 & \text{when $a_{n+1}\ne 0$} \\
a_i y & \text{when $a_{n+1}=0$} 
\end{cases} \\
&\beta^*_{\bsa}(\zeta) = \eta  
\end{align*}
Let $\fp(\bsa)$ denote the radical of $\Ker(\beta^*_{\bsa})$; it is a prime ideal because the target  modulo its radical is a domain. A direct computation reveals that
\begin{equation}
\label{eq:beta-prime}
\fp(\bsa) = 
\begin{cases}
\left(a_{n+1}x_i - a_i \zeta^2\mid 1\le i \le n\right) +(u_1,\dots,u_n) & \text{if $a_{n+1}\ne 0$}\\
(\zeta)+ \left(a_ix_j - a_j x_i\mid 1\le i < j\le n\right)  + (u_1,\dots,u_n) & \text{if $a_{n+1}= 0$.}
\end{cases}
\end{equation}
So one gets a map
\[
\bbP^{n}(k) \lra \Proj H^*(\mcE,k) \quad\text{where}\quad \bsa \mapsto \sqrt{\Ker(\beta^*_{\bsa})}\,.
\]
When $k$ is algebraically closed, this map is an isomorphism onto the closed points, and through this we can identify $\hysupp_{\mcE}(M)$ with a  subset of $\Proj H^*(\mcE,k)$.

\section{The algebra $\test$}
\label{se:test}
 In this section we discuss modules over the superalgebra
\begin{equation}
\label{eq:fT}
\test \colonequals \frac{k[t,\tau]}{(t^p-\tau^2)}
\end{equation}
where $t$ is of even degree and $\tau$ is of odd degree. This algebra is commutative but not graded-commutative  since the odd degree element $\tau$ does not square to zero.  As a ring $\test$ is a noetherian domain of Krull dimension one.

We are interested in the representation theory of $\test$, and this is governed by the maximal Cohen--Macaulay modules. We will be interested also in the infinitely generated analogues of the MCM  modules, namely the \emph{G-projective modules}, where ``G" stands for ``Gorenstein". In what follows, we speak of ``G-projectives" rather than MCM modules, for consistency.  

\subsection*{G-projective modules}
Recall that the ring $\test$ is Gorenstein of Krull dimension one.  A module $M$ is G-projective if and only if $\Ext_{\test}^1(M,\test)=0$; see \cite[\S4]{Christensen:2000a}.  for details, including the definition of G-projectives. In particular,  the first syzygy module of any  $\test$-module is G-projective. Moreover, the finitely generated G-projective modules are precisely the MCM; \cite[\S2.1]{Bruns/Herzog:1998a} and also \cite[Theorems~3.3.7 and 3.3.10(d)]{Bruns/Herzog:1998a}.  Since there are only finitely many (isomorphism classes of) indecomposable finitely generated $\test$-modules, each G-projective module is a direct sum of finitely generated modules; see Beligiannis~\cite[Theorem~4.20]{Beligiannis:2011a}. 

Let $\GProj(\test)$ be the full subcategory of $\Mod(\test)$ consisting of  G-projective modules. It is a Frobenius exact category, with projectives (and hence injectives) the  projective $\test$-modules. We write $\uGProj(\test)$ for its stable category, viewed as a triangulated category.  One has an approximation functor $\Mod(\test)\to\uGProj(\test)$, which induces a right adjoint to the inclusion $\uGProj(\test)\subseteq \underline{\Mod}(\test)$ into the category of all $A$-modules modulo projectives. Thus  an $\test$-module has finite projective dimension if and only if its image in $\uGProj(\test)$ under the approximation functor is zero; this is why we care about the latter category. We write $\uGproj(\test)$ for the subcategory $\uGProj(\test)$ consisting of finitely generated modules.

Ignoring  grading, the classification of the indecomposable finitely generated G-projective $\test$-modules appears in Jacobinski~\cite{Jacobinski:1967a}, and is incorporated in the ADE classification, for $\test$ is an $A_{p-1}$ singularity; see   \cite[Proposition~5.11]{Yoshino:1990a}.  Besides $A$ itself,  these are the ideals $M_i\colonequals (t^{i},\tau)$ for $1\le i\le (p-1)/2$; observe that $M_{p-i}\cong M_i$ as ungraded modules. 

Taking grading into account, the $\{M_{i}\}_{i=1}^{p-1}$ are all the isomorphism classes of graded, non-projective, G-projective modules, up to shift. Thus $M_i$ has generators $\alpha$ and $\beta$ of degrees zero and one, respectively, and satisfy $\tau\alpha=t^i\beta$, $t^{p-i}\alpha=\tau\beta$. The module $\Omega(M_i)$ looks  similar, but with $\alpha$ in degree one and $\beta$ in degree zero, so $\Omega(M_i)\cong M_{p-i}$. The minimal  free resolution of $M_i$ is the complex
\begin{equation}
\label{eq:resol}
\cdots \to 
\test \oplus \Pi\test \xrightarrow{\left(\begin{smallmatrix} \tau & t^i \\ -t^{p-i} &
    -\tau\end{smallmatrix}\right)} \Pi\test\oplus \test \xrightarrow{\left(\begin{smallmatrix} \tau & t^i \\
    -t^{p-i} & -\tau \end{smallmatrix}\right)} \test\oplus \Pi\test \to 0\,. 
    \end{equation}
Here $\Pi\test$ is a free $\test$-module on a single odd degree generator.   

The Auslander--Reiten quiver for ungraded finitely generated G-projective   $\test$-modules is
\[ 
\xymatrix{[\test] \ar@<.7ex>[r] & [M_1] \ar@<.7ex>[l]\ar@<.7ex>[r] & [M_2]
  \ar@<.7ex>[l] \ar@<.7ex>[r] & \cdots \ar@<.7ex>[l] \ar@<.7ex>[r]&
  *[]{\, [M_{\frac{(p-1)}{2}}]\qquad\quad }\ar@(ur,dr)[] \ar@<.7ex>[l]  } 
  \]
See \cite[(5.12)]{Yoshino:1990a}. For graded modules, this unfolds  to give
\[ 
\xymatrix@=6mm
{[\test]\ar@<.7ex>[r] & [M_1] \ar@<.7ex>[l] \ar@<.7ex>[r] & \cdots \ar@<.7ex>[l] \ar@<.7ex>[r] &
  [M_{\frac{(p-1)}{2}}] \ar@<.7ex>[l]\ar@<.7ex>[r] & [M_{\frac{(p+1)}{2}}]
  \ar@<.7ex>[l] \ar@<.7ex>[r] & \cdots \ar@<.7ex>[l] \ar@<.7ex>[r] &
  [M_{p-1}] \ar@<.7ex>[l] \ar@<.7ex>[r] & [\Pi\test] \ar@<.7ex>[l]} 
\]
Ignoring the projectives, this is the same as the Auslander--Reiten quiver for ungraded $k[t]/(t^p)$-modules; cf.~ \intref{Proposition}{pr:detect}.

\subsection*{Tensor structures}
The $k$-algebra $\test$ has a canonical structure of a cocommutative  Hopf superalgebra over $k$, where the  elements  $\tau$ and $t$ are primitives.  Its counit, or augmentation, is the map of $k$-algebras 
\[
\test\lra k\quad\text{where $t,\tau\mapsto 0 $.}
\]
The tensor product of any $\test$-module with a projective is projective, and hence the same is true of modules of finite projective dimension. Thus  $\uGProj(\test)$ is a tensor triangulated category and the approximation functor is a tensor functor. 

The ideal $(\tau)$ in $\test$ is primitively generated, so the quotient $k[t]/(t^p)$ inherits a Hopf structure compatible with the surjection $\test\to k[t]/(t^p)$. So $\StMod(k[t]/(t^p))$, the stable category of ungraded $k[t]/(t^p)$-modules is also tensor triangulated, and restriction followed by the approximation functor induces a tensor  functor
\begin{equation}
\label{eq:stable-TT}
\StMod(k[t]/(t^p)) \lra \uGProj(\test)\,.
\end{equation}
The result below is well-known and easy to verify.

\begin{proposition}
\label{pr:detect}
The functor \eqref{eq:stable-TT} is an equivalence of categories; it restricts to an equivalence $\stmod(k[t]/(t^p))\simeq \uGproj(\test)$. \qed
\end{proposition}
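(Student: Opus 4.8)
The plan is to deduce the statement from Eisenbud's theory of matrix factorizations, by reinterpreting $\mathbb{Z}/2$-graded $\test$-modules as factorizations of $t^p$ over $k[t]$.

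First I would check that \eqref{eq:stable-TT} is a well defined, triangulated, coproduct-preserving functor. Restriction along the surjection $\test\to k[t]/(t^p)$ sending $\tau\mapsto 0$ is exact and preserves coproducts, and it takes a free $k[t]/(t^p)$-module to a $\test$-module of projective dimension one: since $\tau$ is a nonzerodivisor on the domain $\test$ and $\test/(\tau)\cong k[t]/(t^p)$, the sequence $0\to\Pi\test\xrightarrow{\cdot\tau}\test\to\test/(\tau)\to 0$ is a free resolution. Composing with the approximation functor $\Mod(\test)\to\uGProj(\test)$, which annihilates the modules of finite projective dimension, therefore kills the projectives, so the composite factors through $\StMod(k[t]/(t^p))$; it is triangulated and coproduct-preserving because its two constituents are. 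Since $\StMod(k[t]/(t^p))$ is compactly generated by $\stmod(k[t]/(t^p))$, and $\uGProj(\test)$ is compactly generated by $\uGproj(\test)$ — here one uses that $\test$ has finite Cohen--Macaulay type, so each G-projective is a direct sum of finitely generated ones — it suffices to prove that \eqref{eq:stable-TT} restricts to an equivalence $\stmod(k[t]/(t^p))\simeq\uGproj(\test)$ on the subcategories of compact objects.

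For this I would use the following reinterpretation. As a $\mathbb{Z}/2$-graded ring, $\test=k[t]\oplus k[t]\tau$ with $\tau^2=t^p$; since $\test$ is commutative, a $\mathbb{Z}/2$-graded $\test$-module $M=M_0\oplus M_1$ is exactly a pair of $k[t]$-modules $M_0,M_1$ together with $k[t]$-linear maps $a\colon M_0\to M_1$ and $b\colon M_1\to M_0$ — the two halves of the action of $\tau$ — satisfying $ba=t^p\,\id_{M_0}$ and $ab=t^p\,\id_{M_1}$; that is, a factorization of $t^p$ over $k[t]$. Recalling that $\test$ is Gorenstein of Krull dimension one, a finitely generated $\test$-module is G-projective, i.e. maximal Cohen--Macaulay, if and only if it is torsion-free over $\test$, equivalently torsion-free over the subring $k[t]$ (as $\test$ is a domain, module-finite over $k[t]$), equivalently $M_0$ and $M_1$ are free over the principal ideal domain $k[t]$. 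Thus $\uGproj(\test)$ is equivalent to the homotopy category of matrix factorizations of $t^p$ over $k[t]$, the free $\test$-modules going over to the trivial matrix factorizations; by Eisenbud's theorem this homotopy category is $\underline{\MCM}(k[t]/(t^p))=\stmod(k[t]/(t^p))$. Unwinding the definitions, the composite equivalence so obtained sends a $k[t]/(t^p)$-module $N$, regarded over $\test$ via restriction, to the two-periodic tail of a free $k[t]/(t^p)$-resolution of $N$, which is precisely the matrix factorization computing the G-projective approximation of the restriction of $N$; so, up to the autoequivalence $\Omega$ of $\uGproj(\test)$, it agrees with \eqref{eq:stable-TT} (on indecomposables it matches $k[t]/(t^j)$ with $M_j$, compatibly with the coincidence of Auslander--Reiten quivers recorded above). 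Hence \eqref{eq:stable-TT} is an equivalence.

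The step I expect to require the most care is this last identification: checking that the equivalence furnished by the factorization reinterpretation together with Eisenbud's theorem really coincides, up to the shift $\Omega$, with the functor \eqref{eq:stable-TT} built from restriction and approximation. This is essentially a bookkeeping matter — relating the approximation functor over $\test$ to the periodic part of a free resolution over $k[t]/(t^p)$, and tracking the odd-degree convention for $\tau$ — but it is where the argument must be made precise. A more hands-on alternative, bypassing matrix factorizations, is to argue directly on compact objects: \eqref{eq:stable-TT} is essentially surjective because it carries the indecomposable $k[t]/(t^j)$, $1\le j\le p-1$, to the shift of its $\test$-syzygy $(t^j,\tau)=M_j$, and these exhaust $\uGproj(\test)$; and it is fully faithful because, being a triangulated functor between Krull--Schmidt categories with Auslander--Reiten triangles that is bijective on isomorphism classes of indecomposables and sends irreducible maps to irreducible maps, it induces an isomorphism of the two coinciding Auslander--Reiten quivers, and hence of all $\Hom$-spaces.
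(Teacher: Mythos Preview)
The paper does not prove this proposition: it is labeled ``well-known and easy to verify'' and closed with a \qed. Your proposal is correct and supplies the missing argument in full.

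Your main route --- identifying $\mathbb{Z}/2$-graded $\test$-modules with matrix factorizations of $t^p$ over $k[t]$ and invoking Eisenbud's equivalence with $\underline{\MCM}(k[t]/(t^p))=\stmod(k[t]/(t^p))$ --- is the natural conceptual explanation, and is in fact the standard way such statements are established. You correctly flag the one point requiring care, namely matching the Eisenbud equivalence with the specific functor \eqref{eq:stable-TT} up to a power of $\Omega$; the bookkeeping does go through (the restriction of $k[t]/(t^j)$ concentrated in degree zero has first $\test$-syzygy $(t^j,\tau)=M_j$, so its G-projective approximation is $\Omega^{-1}M_j\cong M_{p-j}$). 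Your hands-on alternative via the Auslander--Reiten quivers is closer in spirit to what the authors presumably intend by ``easy to verify'', since they display both quivers immediately before the proposition and note their coincidence; for categories this small and explicit the bijection on indecomposables together with a direct computation of $\sHom$ between them suffices. Either way, the reduction from the big categories to compacts via compact generation --- using that $\test$ has finite Cohen--Macaulay type, as the paper records citing Beligiannis --- is exactly right.
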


Next we present a criterion for detecting when certain $A$-modules
have finite flat dimension. This is used in \intref{Section}{se:rank}
to construct rank varieties for modules over elementary supergroup
schemes. It is also with this application in mind that we switch to
speaking of flat dimension of a module, rather than its projective
dimension. Observe that one is finite if and only if the other is
finite, for $\dim A=1$.

\begin{definition}
\label{de:maximal-rank}
Any $k$-linear map $f\colon V\to V$ of vector spaces with $f^2=0$ satisfies $\Im(f)\subseteq \Ker(f)$; when equality holds we say $f$ has \emph{maximal image}. When $\rank_kM$ is finite the preceding inclusion yields inequalities
\[
\rank_k f \colonequals \rank_k\Im(f) \le \frac{\rank_k V}2 \le \rank_k\Ker(f)\,.
\]
Evidently $f$ has maximal image precisely when $\rank_kf =  (\rank_kV)/2$. The result below is a super analogue of \cite{Avramov/Iyengar:2018a}. \end{definition}

\begin{theorem}
\label{th:rank}
Let $M$ be a graded $\test$-module on which $t$ (equivalently, $\tau$) is nilpotent. Then $M$ has finite flat dimension
if and only if the square zero map
\begin{equation}
\label{eq:2x2}
 \begin{pmatrix} \tau & t \\ -t^{p-1} & -\tau \end{pmatrix} \colon M\oplus M \lra M\oplus M
\end{equation}
has maximal image. If $\rank_kM$ is finite, this happens if and only if the rank of the map above is equal to $\rank_kM$; otherwise the rank is strictly less.
\end{theorem}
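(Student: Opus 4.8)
The plan is to recognise the cokernel of the displayed square-zero map as a Tor group over $\test$, and then to read off finiteness of flat dimension from it. Write $\phi$ for the matrix in \eqref{eq:2x2}; since $\tau^2=t^p$ in $\test$ one has $\phi^2=0$ on every $\test$-module, and $\phi$ is exactly the differential in the minimal free resolution \eqref{eq:resol} of $M_1=(t,\tau)$. The crucial observation is that $k=\test/(t,\tau)$ has first syzygy $\Omega_{\test}k=(t,\tau)\cong M_1$, so that the minimal free resolution of $k$ over $\test$ has, in each positive homological degree, a free module that is $\test\oplus\Pi\test$ or its parity shift, with differential $\phi$ in every homological degree $\ge2$. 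Consequently, tensoring with $M$ over $\test$ and taking homology gives
\[
\Tor^{\test}_i(k,M)\;\cong\;\Ker\bigl(\phi\colon M\oplus M\to M\oplus M\bigr)\big/\Im\bigl(\phi\colon M\oplus M\to M\oplus M\bigr)\qquad(i\ge2),
\]
independently of such $i$; and since $\Im\phi\subseteq\Ker\phi$, this group vanishes precisely when $\phi$ has maximal image on $M\oplus M$. So the theorem reduces to the claim that $M$ has finite flat dimension if and only if $\Tor^{\test}_i(k,M)=0$ for every $i\ge2$.

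One direction is immediate: if $M$ has finite flat dimension — equivalently finite projective dimension, as $\dim\test=1$ — then $\Tor^{\test}_i(k,M)=0$ for $i\gg0$, hence for all $i\ge2$ by the previous paragraph. For the converse I would choose a short exact sequence $0\to L\to F_0\to M\to0$ with $F_0$ free; then $L=\Omega_{\test}M$ is G-projective and $\Tor^{\test}_1(k,L)\cong\Tor^{\test}_2(k,M)=0$. By the structure theory recalled in \intref{Section}{se:test}, $L$ is a direct sum of finitely generated indecomposable G-projectives, each isomorphic to $\test$, to $\Pi\test$, or to a parity shift of some $M_i$ with $1\le i\le p-1$. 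Since the minimal free resolution \eqref{eq:resol} of $M_i$ has all its differentials with entries in $(t,\tau)$ and never terminates, $\Tor^{\test}_1(k,M_i)\ne0$; hence the vanishing of $\Tor^{\test}_1(k,L)$ rules out every summand of type $M_i$, so $L$ is free and $\fdim_{\test}M\le1<\infty$.

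Granting the equivalence ``$M$ has finite flat dimension $\iff$ $\phi$ has maximal image on $M\oplus M$'', the final assertion of the theorem is exactly \intref{Definition}{de:maximal-rank} applied to $f=\phi$ and $V=M\oplus M$: when $\rank_kM<\infty$, the inclusion $\Im\phi\subseteq\Ker\phi$ together with $\rank_k\Im\phi+\rank_k\Ker\phi=2\rank_kM$ shows that maximal image is equivalent to $\rank_k\phi=\rank_kM$, while otherwise $\rank_k\phi<\rank_kM$. I expect the one genuinely delicate step to be the converse in the second paragraph when $M$ is infinitely generated: one cannot simply invoke ``$\Tor_1(k,L)=0\Rightarrow L$ free'', which is false for infinitely generated modules over a singular (local) ring, and it is precisely the facts that first syzygies over $\test$ are G-projective and that every G-projective $\test$-module splits into finitely generated summands — that is, the finite representation type of $\test$ — that make the argument go through. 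The hypothesis that $t$ (equivalently $\tau$, as $\tau^2=t^p$) be nilpotent on $M$ serves mainly to single out the case relevant for the applications; if one prefers, it also permits passing to the complete local ring $\pos{t,\tau}/(\tau^2-t^p)$, though the argument sketched above applies to $\test$ directly.
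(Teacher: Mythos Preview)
Your argument is correct and takes a genuinely different route from the paper's. The paper works on the \emph{injective} side: it uses that $\test$ is one-dimensional Gorenstein to trade finite flat dimension for finite injective dimension, then invokes the torsion hypothesis on $M$ to ensure that the minimal injective resolution of $M$ is built from copies of the injective hull of $k$, so that $\injdim_{\test}M\le 1$ reduces to $\Ext^2_{\test}(k,M)=0$; this Ext group is then read off from the resolution \eqref{eq:resol} as the homology of $\phi$ on $M\oplus M$. Your proof stays on the \emph{Tor} side and instead leverages the structure theory of G-projectives over $\test$: you compute $\Tor^{\test}_i(k,M)\cong\Ker\phi/\Im\phi$ for $i\ge 2$ directly, and for the converse you pass to the syzygy $L=\Omega M$, decompose it as a direct sum of finitely generated G-projectives via Beligiannis, and eliminate the non-free summands $M_i$ using $\Tor_1^{\test}(k,M_i)\ne 0$.

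Each approach has its trade-offs. Yours is somewhat more robust: as you note, it does not actually use the nilpotence hypothesis on $t$ and $\tau$, so the equivalence ``finite flat dimension $\Leftrightarrow$ $\phi$ has maximal image'' holds for arbitrary $\test$-modules. On the other hand, the paper's argument is lighter on prerequisites: it avoids any appeal to the decomposition of infinitely generated G-projectives, relying only on Gorenstein duality and the explicit resolution. Since Beligiannis's theorem is already quoted in \intref{Section}{se:test}, this is not a serious cost in context, but it is worth being aware that your converse direction hinges essentially on finite Cohen--Macaulay type, whereas the paper's does not.
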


\begin{proof}
 Given that $\tau^2=t^p$ in $\test$, when $t$ or $\tau$ is nilpotent on $M$, both are nilpotent and then $M$ is $(\tau,t)$-torsion.  Observe that  $M$ has finite flat dimension if and only if it has finite injective dimension, and since $\dim \test=1$, the latter condition is equivalent to the injective dimension of $M$ being at most one; this follows, for example, from \intref{Lemma}{le:pd-test}, for $A$ is a hypersurface.
 
Since $M$ is $(\tau,t)$-torsion, the minimal injective resolution of
$M$ consists only of copies of the injective hull of $k$. It follows that $M$ has injective dimension at most one if and only if  $\Ext^2_{\test}(k,M)=0$. Using the resolution \eqref{eq:resol} with $i=1$ for $J_1=k$, we see that $\Ext^2_{\test}(k,M)$ is isomorphic to the middle homology in the sequence
\[ 
\Pi M \oplus M
  \xrightarrow{\left(\begin{smallmatrix}\tau & t^i \\ -t^{p-i} & -\tau \end{smallmatrix}\right)}
M \oplus \Pi M 
  \xrightarrow{\left(\begin{smallmatrix}\tau & t^i \\ -t^{p-i} & -\tau \end{smallmatrix}\right)}
\Pi M \oplus M. 
\]
This justifies the first claim in the statement of the theorem. It also follows that when $\rank_kM$ is finite,  $\Ext^2_{\test}(k,M)=0$  if and only if the rank of the matrix \eqref{eq:2x2} on $M \oplus M$ is equal to $\rank_kM$, and otherwise the rank is strictly less. 
\end{proof}

\subsection*{Cohomology}
The ring $\test$ is a hypersurface and its cohomology ring
\[
  H^{*,*}(\test,k)\colonequals \Ext^{*,*}_{\test}(k,k)
\]
is well-known---see \cite[Theorem~5]{Sjodin:1976a}---and easy to compute using the minimal resolution of $k$ given in \eqref{eq:resol}, for $i=1$. Namely, there is an isomorphism of superalgebras
 \begin{equation}
 \label{eq:Acohomology}
 H^{*,*}(\test,k) \cong \Lambda(u)   \otimes_k k[\eta]
 \end{equation}
with $|u|=(1,0)$  and $|\eta|=(1,1)$.

\section{$\pi$-points}
\label{se:pi-points}
As before $k$ is a field of positive characteristic $p\ge 3$. We refocus on elementary supergroup schemes $\mcE$ over $k$ whose group algebra is of the form \eqref{eq:kE}.  In this section we introduce $\pi$-points for $\mcE$, based on maps from the $k$-algebra $\test$ discussed in \intref{Section}{se:test}. The main result is \intref{Theorem}{th:factorisation} that relates support defined via $\pi$-points to support sets introduced in \intref{Section}{se:supportsets}. As a consequence, we deduce that $\pi$-points detect projectivity; see \intref{Theorem}{th:detection}.

\begin{definition}
\label{defn:pi-point}
A \emph{$\pi$-point} of $\mcE$  consists of an extension field $K$ of $k$ together with a map of graded $K$-algebras
\[ 
\alpha \colon \test_K  \to K\mcE_K \quad\text{where $\test_K\colonequals K\otimes_k\test$},
\]
such that $K\mcE_K$ has finite flat dimension as an $\test_K$-module via $\alpha$. Since $\alpha$ is a map of $K$-algebras and $\rank_K K\mcE_K$ is finite, the latter is a finitely generated $\test_K$-module. In particular its flat dimension is the same as its projective dimension. 

Of particular interest are maps $\alpha_{\bsa}\colon \test_K \to K\mcE_K$ defined by the assignment
\begin{align}
\label{eq:alpha}
  \begin{split}
     \alpha_{\bsa}(t) &=  a_1 s_1 + \cdots + a_{n-1}s_{n-1} +  a_n s_n^{p^{m-1}} + a_{n+1}^2s_n\,, \\ 
      \alpha_{\bsa}(\tau)&=a_{n+1}^p\sigma \\
  \end{split}
\end{align}
for  $\bsa\colonequals (a_1,\dots,a_{n+1})\in \bbA^{n+1}(K)$. As we shall see in \intref{Proposition}{pr:reps} such a map is a $\pi$-point, that is to say, has finite flat dimension, if and only if  $\bsa\ne 0$.
\end{definition}

Our first task is to describe criteria that detect when a given map of $k$-algebras 
\[
\alpha\colon \test \to k\mcE
\]
has finite flat dimension.

\begin{construction}
\label{con:factorisation}
We keep the notation from \intref{Construction}{con:powerseries}. In particular, $k\mcE = P/I$ where $P=\pos{\bss,\sigma}$ is the power series over $k$ in indeterminates $\bss = s_1,\dots,s_n$ and $\sigma$, and $I$ is the ideal generated by the regular sequence 
\[
s_1^p,\dots,s_{n-1}^p, s_n^{p^m}, s_n^p-\sigma^2\,.
\]
It is helpful also to consider the following ideal and the $k$-algebra it determines
\begin{equation}
\label{eq:J}
J\colonequals (s_1^{p},\dots, s_{n-1}^p, s_n^{p^m}) \subset \pos{\bss} \quad\text{and}\quad S \colonequals \pos{\bss}/J\,.
\end{equation}
Observe that $S$ is a $k$-subalgebra of $k\mcE$ and the latter is free as an $S$-module, with basis $\{1,\sigma\}$.  A map of $k$-algebras $\alpha \colon \test \to k\mcE$ is thus determined by polynomials $f(\bss), g(\bss)$  in $k[\bss]$, uniquely defined only modulo $J$,  such that 
\begin{equation}
\label{eq:fg-condition}
f(\bss)^p   \equiv  g(\bss)^2 s_n^p      \quad \text{modulo} \quad J 
\end{equation}
where  $\alpha(t)= f(\bss)$ and $\alpha(\tau)= g(\bss)\sigma$.  Set
\[
P_{\alpha} \colonequals \frac{\pos{\bss,\sigma}}{(f(\bss)^p - g(\bss)^2\sigma^2)}\,.
\]
Observe that the element $(f(\bss)^p - g(\bss)^2\sigma^2)$ is in the ideal $I$ defining $k\mcE$; see \eqref{eq:fg-condition}. Thus $\alpha$ can be factored as
\begin{equation}
\label{eq:factor-alpha}
\test \xra{\ \dot\alpha\ } P_{\alpha} \xra{\ \bar{\alpha}\ } k\mcE
\end{equation}
where $\dot\alpha$ maps $(t,\tau)$ to $(f(\bss),g(\bss)\sigma)$ and $\bar\alpha$ is the canonical surjection.
\end{construction}

The result below contains criteria for detecting when a given map of $k$-algebras from $\test$ to $k\mcE$ is a $\pi$-point.

\begin{theorem}
\label{th:factorisation}
In the notation of \intref{Construction}{con:factorisation} assume that
$\alpha$ does not factor through the surjection $\test\to k$. Then the
map $\dot\alpha$ has finite flat dimension, and for any $k\mcE$-module
$M$ the module $\alpha^*(M)$ has finite flat dimension if and only if
${\bar\alpha}^*(M)$ has finite flat dimension.
\end{theorem}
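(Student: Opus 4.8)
The plan is to reduce the statement to the behaviour of torsion modules, to deduce the finite flat dimension of $\dot\alpha$ by lifting it to a map of regular local rings, and then to prove the equivalence, whose non-trivial half is a descent along $\dot\alpha$.

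\emph{Reductions.} First I would note that every $k\mcE$-module $M$ is torsion over $k\mcE$, so $\bar\alpha^{*}(M)$ is $\fm_{P_\alpha}$-torsion, and --- since $\alpha(t)=f(\bss)$ lies in the maximal ideal of $k\mcE$ and is hence nilpotent --- $\alpha^{*}(M)$ is torsion over $\test$ at its augmentation ideal. For a torsion module over the Gorenstein ring $P_\alpha$ (of Krull dimension $n$), or over $\test$ at its augmentation ideal (dimension one), finite flat dimension is equivalent to finite injective dimension; and the minimal injective resolution then being built only from copies of the injective hull of $k$, this holds if and only if $\Ext^{i}(k,-)$ vanishes for $i\gg 0$. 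Over $\test$ this is moreover equivalent, by \intref{Theorem}{th:rank}, to the maximal-image condition on the square-zero endomorphism of $M\oplus M$ obtained from $\left(\begin{smallmatrix}\tau & t\\ -t^{p-1} & -\tau\end{smallmatrix}\right)$ by letting $t,\tau$ act through $\alpha$.

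\emph{Finite flat dimension of $\dot\alpha$.} The rule $t\mapsto f(\bss)$, $\tau\mapsto g(\bss)\sigma$ extends to a homomorphism $\varphi\colon\pos{t,\tau}\to P$ of power series rings, and \eqref{eq:fg-condition} gives $\varphi(t^{p}-\tau^{2})=f(\bss)^{p}-g(\bss)^{2}\sigma^{2}$, so passing to the hypersurface quotients $\pos{t,\tau}/(t^{p}-\tau^{2})$ and $P/(f^{p}-g^{2}\sigma^{2})=P_\alpha$ recovers $\dot\alpha$. The hypothesis that $\alpha$ does not factor through $\test\to k$ says precisely that $f^{p}-g^{2}\sigma^{2}\ne 0$ in $P$; being a non-zero element of the domain $P$ it is a non-zerodivisor, so $P$ and $\pos{t,\tau}/(t^{p}-\tau^{2})$ are Tor-independent over $\pos{t,\tau}$ and $P_\alpha\simeq P\lotimes_{\pos{t,\tau}}\pos{t,\tau}/(t^{p}-\tau^{2})$. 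As $\pos{t,\tau}$ is regular of Krull dimension two, $P$ has flat dimension at most two over it; tensoring a bounded flat $\pos{t,\tau}$-resolution of $P$ with $\pos{t,\tau}/(t^{p}-\tau^{2})$ --- permitted by Tor-independence --- exhibits $P_\alpha$ as of flat dimension at most two over the completion of $\test$ at its augmentation ideal, and a localisation argument, using that $\test$ is regular away from that ideal, upgrades this to $\fdim_{\test}P_\alpha<\infty$.

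\emph{The equivalence.} If $\bar\alpha^{*}(M)$ has finite flat dimension over $P_\alpha$, I restrict a bounded flat $P_\alpha$-resolution of it along $\dot\alpha$; each term then has $\test$-flat dimension at most $\fdim_{\test}P_\alpha$, so the total complex displays $\alpha^{*}(M)=\dot\alpha^{*}\bar\alpha^{*}(M)$ as of finite flat dimension over $\test$. For the converse --- the substantive direction --- I would use the change-of-rings isomorphism $\RHom_{\test}(k,\alpha^{*}(M))\simeq\RHom_{P_\alpha}(k\lotimes_{\test}P_\alpha,\bar\alpha^{*}(M))$ together with the compatibility of Eisenbud cohomology operators forced by $f^{p}-g^{2}\sigma^{2}=\varphi(t^{p}-\tau^{2})$: the matrix factorisation $\left(\begin{smallmatrix}\tau & t\\ -t^{p-1} & -\tau\end{smallmatrix}\right)$ of \eqref{eq:resol} is carried by $\dot\alpha$ to a matrix factorisation of $f^{p}-g^{2}\sigma^{2}$ over $P$, so the degree-two cohomology operator of the hypersurface $\test$ is taken to that of the hypersurface $P_\alpha$. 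Consequently boundedness of $\Ext^{*}_{\test}(k,\alpha^{*}(M))$ forces the operator of $P_\alpha$ to act nilpotently on the hyper-$\Ext$ module $\Ext^{*}_{P_\alpha}(k\lotimes_{\test}P_\alpha,\bar\alpha^{*}(M))$, and transporting this through the operator-equivariant morphism $k\lotimes_{\test}P_\alpha\to P_\alpha/\fm_{P_\alpha}=k$ gives boundedness of $\Ext^{*}_{P_\alpha}(k,\bar\alpha^{*}(M))$, i.e.\ $\fdim_{P_\alpha}\bar\alpha^{*}(M)<\infty$. I expect this last transfer to be the main obstacle: the complex $k\lotimes_{\test}P_\alpha$ has positive-dimensional homology, so $k$ is not finitely built from it in $\dcat{P_\alpha}$ and no formal d\'evissage is available; one genuinely needs the cohomology-operator and support-set machinery for complete intersections developed in \intref{Appendix}{se:appendix} to relate nilpotence of the operator on $\Ext^{*}_{P_\alpha}(k\lotimes_{\test}P_\alpha,-)$ and on $\Ext^{*}_{P_\alpha}(k,-)$.
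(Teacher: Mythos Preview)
Your treatment of the finite flat dimension of $\dot\alpha$ and of the easy implication is essentially the paper's argument: lift to the regular ring $\pos{t,\tau}$, verify Tor-independence, and base change. The detour through the completion of $\test$ is unnecessary---once $P_\alpha$ has finite flat dimension over $\hat\test$ it has finite flat dimension over $\test$ simply because $\test\to\hat\test$ is flat---but this is harmless.

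The genuine gap is in the converse direction, and it lies exactly where you flag it. Your assertion that ``$k$ is not finitely built from $k\lotimes_{\test}P_\alpha$ in $\dcat{P_\alpha}$'' is incorrect, and the reasoning you give for it is a non-sequitur: the fact that $F\colonequals P_\alpha\lotimes_{\test}k$ has homology of positive Krull dimension shows $F\notin\Thick(k)$, not the reverse containment. In fact the paper's proof rests on precisely the opposite conclusion: since $P_\alpha$ is a hypersurface, a theorem of Takahashi~\cite[Corollary~6.9]{Takahashi:2010a} says that $k\in\Thick(F)$ in $\dbcat{P_\alpha}$ for \emph{any} non-perfect $F$. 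One checks that $F$ lies in $\dbcat{P_\alpha}$ via the Tor-independent square (it identifies with $\pos{\bss,\sigma}\lotimes_{\pos{t,\tau}}k$, which is bounded since $\pos{t,\tau}$ is regular) and is non-perfect because $k\lotimes_{P_\alpha}F\cong k\lotimes_{\test}k$ has unbounded homology. With $k\in\Thick(F)$ in hand, the converse is a formal d\'evissage on $\Tor$: the isomorphism $M\lotimes_{\test}k\cong M\lotimes_{P_\alpha}F$ transports the vanishing of $\Tor^{\test}_i(M,k)$ for $i\gg 0$ to vanishing of $\Tor^{P_\alpha}_i(M,F)$, and thick closure passes this to $\Tor^{P_\alpha}_i(M,k)$.

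Your proposed route through Eisenbud operators is not wrong in spirit---the compatibility you describe is real---but the step ``nilpotence on $\Ext^*_{P_\alpha}(F,-)$ implies nilpotence on $\Ext^*_{P_\alpha}(k,-)$'' is exactly the content of $k\in\Thick(F)$ for a hypersurface, and nothing in \intref{Appendix}{se:appendix} supplies it. The Appendix handles support sets for modules, not thick-subcategory generation by complexes; the input you need is Takahashi's classification, not the support machinery.
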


\begin{proof}
As $\alpha$ does not factor through the surjection $\test\to k$, at
least one $f(\bss)$ or $g(\bss)$ is non-zero in $k\mcE$. This implies that $f(\bss)^p-g(\bss)^2\sigma^2$ is non-zero in $\pos{\bss,\sigma}$.

Consider the map of $k$-algebras
\[
\tilde\alpha \colon \pos{t,\tau} \lra \pos{\bss,\sigma} \quad\text{where}\quad t\mapsto f(\bss) \quad \text{and}\quad \tau\mapsto g(\bss)\sigma\,.
\]
This is  a lift of the map $\dot\alpha$, meaning that the following square commutes
\[
\begin{tikzcd}
\pos{t,\tau} \arrow[r, "{\tilde\alpha}"] \arrow[d,twoheadrightarrow] & \pos{\bss,\sigma} \arrow[d,twoheadrightarrow] \\
\test \arrow[r,"{\dot\alpha}"] & P_\alpha
\end{tikzcd}
\]
Here the vertical maps are the canonical surjections.  We claim that this diagram of $k$-algebras is a Tor-independent fibre square:
\[
\Tor^{\pos {t,\tau}}_i(\pos{\bss,\sigma},\test) = 
\begin{cases}
P_\alpha & \text{for $i=0$}\\
0  & \text{for $i\ne 0$.}
\end{cases}
\]
Indeed, the complex $0\to \pos{t,\tau}\xra{t^p-\tau^2} \pos{t,\tau}\to 0$ is a free resolution of $\test$ over $\pos{t,\tau}$, so the Tor-modules in question are the homologies of the complex
\[
\begin{tikzcd}
0 \arrow[r] & \pos{\bss,\sigma} \arrow[rr,"{f(\bss)^p-g(\bss)^2\sigma^2}"] && \pos{\bss,\sigma} \arrow[r] & 0\,.
\end{tikzcd}
\]
In degree zero the homology is evidently $P_\alpha$.  Since $f(\bss)^p-g(\bss)^2\sigma^2$ is non-zero the complex above has no homology in other degrees. This justifies the claim.

The ring $\pos{t,\tau}$ is regular, of Krull dimension two, so any map out of it, in particular $\tilde\alpha$, has finite flat dimension. And since the square above is a Tor-independent fibre square, we deduce that the flat dimension of $\dot\alpha$ is finite as well.

As to the second part of the statement: Since $\dot\alpha$ has finite flat dimension, when ${\bar{\alpha}}^*(M)$ has finite flat dimension, so does the $\test$-module $\alpha^*(M)={\dot\alpha}^*({\bar{\alpha}}^*(M))$. The proof of the converse exploits the following statement about the bounded derived category, $\dbcat{P_\alpha}$, of the hypersurface $P_\alpha$: 
\[
k \in \Thick(F) \quad \text{for $F\in\dbcat{P_\alpha}$  not perfect.}
\]
See \cite[Corollary~6.9]{Takahashi:2010a}. We will need this result  for the $P_\alpha$-complex
\[
F\colonequals P_\alpha\lotimes_{\test} k \,,
\]
where $P_\alpha$ acts through the left-hand factor of the tensor product. Observe that this $P_\alpha$-complex is in $\dbcat{P_\alpha}$ for one has isomorphisms
\begin{align*}
P_\alpha \lotimes_{\test} k 
	&\cong ( \pos{\bss,\sigma}  \lotimes_{\pos{t,\tau}} \test ) \lotimes_{\test} k \\
	&\cong  \pos{\bss,\sigma}  \lotimes_{\test} k
\end{align*}
in $\dbcat{ \pos{\bss,\sigma}}$. Moreover $F$ is not perfect because
\[
k\lotimes_{P_\alpha} F  = k\lotimes_{P_\alpha} (P_\alpha \lotimes_{\test} k) \cong k\lotimes_{\test} k
\]
which has non-zero homology in all non-negative degrees.

Now suppose that $\alpha^*(M)$ has finite flat dimension, so  $\Tor^{\test}_i(M,k)=0$ for $i\gg 0$. Since $\dot\alpha$ is flat, associativity of tensor products   yields isomorphisms
\[
M\lotimes_{\test} k \cong  M \lotimes_{P_{\alpha}} (P_\alpha \lotimes_{\test} k)   \cong M\lotimes_{P_{\alpha}} F\,.
\]
We deduce that
\[
\Tor^{P_{\alpha}}_i(M,F)=0 \quad \text{for $i\gg 0$.}
\]
As noted above, the $P_\alpha$-module $k$ is in $\Thick(F)$ in $\dcat{P_\alpha}$. It follows that
\[
\Tor^{P_{\alpha}}_i(M,k)=0 \quad \text{for $i\gg 0$}
\]
as well. Since $M$ has finite Loewy length, we conclude that the flat dimension of $\bar{\alpha}^*(M)$ is finite, as desired; see \intref{Lemma}{le:pd-test}.
\end{proof}

\begin{corollary}
\label{co:factorisation}
For any map of $k$-algebras $\alpha\colon A\to k\mcE$ the  following conditions are equivalent.
\begin{enumerate}[\quad\rm(1)]
\item
 $\alpha$ is a $\pi$-point;
 \item
 $\dot\alpha$ and $\bar{\alpha}$ are of finite flat dimension;
 \item
 $\bar{\alpha}$ is complete intersection of codimension $n$;
 \item
 $f(\bss)^p - g(\bss)^2\sigma^2$ is not in $\fm I$.
\end{enumerate}
\end{corollary}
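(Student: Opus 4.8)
The plan is to establish the cycle of implications (1) $\Rightarrow$ (2) $\Rightarrow$ (3) $\Rightarrow$ (4) $\Rightarrow$ (1), leaning on \intref{Theorem}{th:factorisation} for the hard equivalences and on standard complete intersection theory for the rest. First, observe that if $\alpha$ factors through the surjection $\test \to k$, then $\alpha^*(M)$ is a restriction of scalars along $k \to k\mcE$, and taking $M = k\mcE$ shows $K\mcE$ cannot have finite flat dimension over $\test$ (it would force $k$ to have finite flat dimension over $\test$, which is false as $\test$ is a one-dimensional hypersurface, not regular); so under any of (1)--(4) we may assume $\alpha$ does not so factor, and \intref{Theorem}{th:factorisation} is available.

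For (1) $\Rightarrow$ (2): if $\alpha$ is a $\pi$-point, then $\alpha^*(k\mcE) = k\mcE$ has finite flat dimension over $\test$; applying \intref{Theorem}{th:factorisation} with $M = k\mcE$ gives that $\bar\alpha^*(k\mcE)$ — which is $k\mcE$ viewed over $P_\alpha$ via $\bar\alpha$ — has finite flat dimension, i.e.\ $\bar\alpha$ is of finite flat dimension; and $\dot\alpha$ always has finite flat dimension by the theorem. For (2) $\Rightarrow$ (3): $\bar\alpha\colon P_\alpha \to k\mcE$ is a surjection of the hypersurface $P_\alpha = \pos{\bss,\sigma}/(f^p - g^2\sigma^2)$ onto $k\mcE = P/I$; since $P_\alpha$ and $k\mcE$ are both quotients of the regular local ring $P = \pos{\bss,\sigma}$, the map $\bar\alpha$ is of finite flat dimension precisely when its kernel in $P_\alpha$ is generated by a regular sequence, i.e.\ $\bar\alpha$ is a complete intersection homomorphism; counting Krull dimensions ($\dim P_\alpha = n$, $\dim k\mcE = 0$) forces the codimension to be exactly $n$. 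For (3) $\Rightarrow$ (4): a complete intersection surjection $\bar\alpha$ of codimension $n$ means $I/(f^p - g^2\sigma^2)$ is generated by a regular sequence of length $n$ inside the hypersurface $P_\alpha$; equivalently $f^p - g^2\sigma^2$ can be taken as the first element of a minimal regular sequence of length $n+1$ generating $I$ in $P$, which is possible exactly when $f^p - g^2\sigma^2 \notin \fm I$ (its image in $I/\fm I$ is a non-zero vector that extends to a basis, by Nakayama). Finally (4) $\Rightarrow$ (1): if $f^p - g^2\sigma^2 \notin \fm I$, then this element is part of a minimal generating set of $I$, hence part of a minimal — therefore regular — generating sequence of $I$ in $P$; quotienting $P$ by it yields $P_\alpha$, and $k\mcE = P_\alpha/(\text{regular sequence of length }n)$, so $\bar\alpha$ has finite flat dimension, and combining with the finite flat dimension of $\dot\alpha$ from \intref{Theorem}{th:factorisation} we get that $\alpha^*(k\mcE) = \dot\alpha^*(\bar\alpha^*(k\mcE))$ has finite flat dimension over $\test$, i.e.\ $\alpha$ is a $\pi$-point.

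The main obstacle will be the bookkeeping at the step (2) $\Leftrightarrow$ (3) $\Leftrightarrow$ (4): one must be careful that ``complete intersection homomorphism'' for a surjection of local rings is being characterized correctly (via the kernel being generated by a regular sequence modulo a minimal generator), and that the Tor-independent fibre square from \intref{Theorem}{th:factorisation} together with the hypersurface structure of $P_\alpha$ correctly transports finite flat dimension of $\bar\alpha$ over $P_\alpha$ to and from finite flat dimension of $\alpha$ over $\test$. The condition $f^p - g^2\sigma^2 \notin \fm I$ is the concrete translation of ``$I$ admits a minimal generating system with $f^p - g^2\sigma^2$ as one member,'' and since $I$ is generated by the regular sequence \eqref{eq:mI} of length $n+1$ in the regular ring $P$, any minimal generating system of $I$ is automatically a regular sequence — this is the key algebraic fact that makes (4) $\Rightarrow$ (1) go through without extra work.
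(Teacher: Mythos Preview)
Your overall structure matches the paper's: you use \intref{Theorem}{th:factorisation} with $M = k\mcE$ for the link between (1) and (2), and the Nakayama/minimal-generating-set argument for (3)$\Leftrightarrow$(4) and (4)$\Rightarrow$(1). These parts are fine and essentially identical to what the paper does.

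The gap is in your (2)$\Rightarrow$(3). You assert that ``since $P_\alpha$ and $k\mcE$ are both quotients of the regular local ring $P$, the map $\bar\alpha$ is of finite flat dimension \emph{precisely when} its kernel in $P_\alpha$ is generated by a regular sequence.'' The easy direction (regular sequence $\Rightarrow$ finite flat dimension) is clear, but the converse is not: over a regular local ring every cyclic module has finite projective dimension, yet most ideals are not generated by regular sequences, so merely being quotients of a common regular ring is not enough. What you need is the nontrivial fact that if $h \in \fm I$ then $\fdim_{P/(h)}(P/I) = \infty$; this is precisely Shamash's theorem~\cite[Theorem~1]{Shamash:1969a} (see also \cite[Theorem~2.1(3)]{Avramov/Iyengar:2018a}), and it is exactly what the paper cites to go from (2) to (4). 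Your closing paragraph flags this step as ``bookkeeping,'' but it is a genuine theorem, not an unwinding of definitions---the Eisenbud operator (or the Shamash construction of the $P_\alpha$-resolution from the $P$-resolution) is what makes it work, and there is no elementary substitute. Either cite Shamash directly for (2)$\Rightarrow$(4), as the paper does, or invoke Avramov's result that a finite-flat-dimension surjection between complete intersection local rings is itself a complete intersection homomorphism.
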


\begin{proof}
(1)$\Leftrightarrow$(2) In either case, since the flat dimension of the $\test$-module $k$ is infinite, $\alpha$ cannot factor through $k$. Thus the desired equivalence holds by \intref{Theorem}{th:factorisation} applied to $M=k\mcE$.

\smallskip

(2)$\Rightarrow$(4) If $f(\bss)^p - g(\bss)^2\sigma^2$ is in $\fm I$,  the flat dimension of $\bar{\alpha}$ is infinite; this is by a result of Shamash~\cite[Theorem~1]{Shamash:1969a}. See also \cite[Theorem~2.1(3)]{Avramov/Iyengar:2018a}.

\smallskip

(3)$\Leftrightarrow$(4) The ideal $I$ is evidently a complete intersection and minimally generated by $n+1$ elements. Thus the hypothesis  in (3) is equivalent to the condition that the element $f(\bss)^p - g(\bss)^2\sigma^2$ can be extended to a minimal generating set for the ideal $I$; that is to say, it is not in $\fm I$.

\smallskip

(4)$\Rightarrow$(1) Any complete intersection map has finite flat dimension; this applies in particular to $\bar{\alpha}$. By the equivalence of (4) and (3), one has that $f(\bss)^p - g(\bss)^2\sigma^2$ is not in $\fm I$, and in particular, $\alpha$ does not factor through $k$. As already discussed above, this implies $\dot\alpha$ has finite flat dimension.
\end{proof}

\begin{remark}
In the factorisation \eqref{eq:factor-alpha} of a $\pi$-point, the map $\dot\alpha$ need not be flat. Indeed, it is easy to verify that that the map is flat if and only if the polynomials $f(\bss),g(\bss)$ can be chosen without non-trivial common factors; equivalently, that they form a regular sequence in the ring $\pos{\bss,\sigma}$.

Here is an example where this property does not hold: Fix an integer $m\ge 3$, an element $a\in k$ such that $a^2\ne 1$, and consider the $\pi$-point
\[
\alpha\colon \test \lra \frac{\pos{s,\sigma}}{(s^{p^m},s^p-\sigma^2)}  \quad
\text{where} \quad t\mapsto s^{p^{m-1}} \quad \text{and}\quad \tau \mapsto a s^{n}
\]
 for $n=(p^m-p)/2$.  That this is indeed a $\pi$-point can be easily checked using criterion (4) in \intref{Corollary}{co:factorisation}. 

On the other hand, for the ``standard" $\pi$-points $\alpha_{\bsa}$ described in \eqref{eq:alpha}, the maps $\dot\alpha_{\bsa}$ are flat, because $(f(\bss),g(\bss))=1$. This is noteworthy because, for our purposes, any $\pi$ point is equivalent to a standard one; see \intref{Proposition}{pr:reps}.
\end{remark}

The following lemma is intended for use in the proof of  \intref{Proposition}{pr:reps}.

\begin{lemma}
\label{le:reps}
Let $f(\bss),g(\bss)$ be polynomials in $k[\bss]$ satisfying equation \eqref{eq:fg-condition}. Then there exist uniquely defined elements $b_1,\dots,b_{n+1}$ in $k$ such that
\[ 
f(\bss)^p - g(\bss)^2\sigma^2  \equiv b_{1}s_1^{p} + \dots + b_{n-1} s_{n-1}^p + b_n s_n^{p^m} + b_{n+1} (s_n^p-\sigma^2)
\]
modulo the ideal $\fm I$  in  $k[\bss,\sigma]$, where $\fm$ and $I$ are as in \eqref{eq:mI}.
\end{lemma}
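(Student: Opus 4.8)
The plan is to show that $I/\fm I$ is an $(n+1)$-dimensional $k$-vector space with basis the residue classes of $g_1\colonequals s_1^p,\dots,g_{n-1}\colonequals s_{n-1}^p,\ g_n\colonequals s_n^{p^m},\ g_{n+1}\colonequals s_n^p-\sigma^2$, and that $h\colonequals f(\bss)^p-g(\bss)^2\sigma^2$ lies in $I$; granting these, existence and uniqueness of the $b_i$ are immediate, since one is merely expressing the class of $h$ in that basis. That $h\in I$ is exactly the point where hypothesis \eqref{eq:fg-condition} enters: writing
\[
h=\bigl(f(\bss)^p-g(\bss)^2s_n^p\bigr)+g(\bss)^2\,(s_n^p-\sigma^2),
\]
the first summand lies in $J$ by \eqref{eq:fg-condition}, hence in $I$, and the second lies in the principal ideal $(s_n^p-\sigma^2)\subseteq I$.

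It remains to identify $I/\fm I$. Put $R\colonequals k[\bss,\sigma]$, so $\fm=(\bss,\sigma)$ and $R=k\oplus\fm$ as $k$-vector spaces. The classes of $g_1,\dots,g_{n+1}$ span $I/\fm I$ over $k$: any element of $I$ is $\sum_i r_ig_i$ with $r_i\in R$, and writing $r_i=c_i+m_i$ with $c_i\in k$ and $m_i\in\fm$ gives $\sum_i r_ig_i\equiv\sum_i c_ig_i$ modulo $\fm I$. For the independence of these classes I would pass to the localisation $R_\fm$, using the canonical isomorphism $I/\fm I\xra{\ \cong\ }IR_\fm/\fm R_\fm\cdot IR_\fm$. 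Now $R_\fm$ is a regular local ring of Krull dimension $n+1$, and $IR_\fm$ is $\fm R_\fm$-primary because $R_\fm/IR_\fm\cong k\mcE$ is artinian; hence $\height(IR_\fm)=n+1$, so by Krull's height theorem $IR_\fm$ cannot be generated by fewer than $n+1$ elements. Since $g_1,\dots,g_{n+1}$ generate $IR_\fm$ and their classes span $IR_\fm/\fm R_\fm IR_\fm$, these $n+1$ classes must form a basis of it. (Equivalently, one may cite directly that the regular sequence $s_1^p,\dots,s_{n-1}^p,s_n^{p^m},s_n^p-\sigma^2$ of \intref{Construction}{con:powerseries} is a minimal generating set of $IR_\fm$.)

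I do not expect a genuine obstacle here. The only point that requires a little care is that the congruence is asserted in the polynomial ring $R$ while Nakayama's lemma and the height count are applied in $R_\fm$; these transfer cleanly via the canonical isomorphism $I/\fm I\cong IR_\fm/\fm R_\fm IR_\fm$. With $\dim_k I/\fm I=n+1$ and the stated basis in hand, the lemma reduces to reading off coordinates.
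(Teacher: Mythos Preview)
Your proof is correct and takes a somewhat different route from the paper's. You argue structurally: show $h\in I$, then prove that $I/\fm I$ has the $g_i$ as a $k$-basis by localising at $\fm$ and invoking Krull's height theorem together with Nakayama. The paper instead works in two stages inside the smaller ring $k[\bss]$: it first expresses $f(\bss)^p-g(\bss)^2s_n^p\in J$ in the basis of $J/\bss J$ given by the monomials $s_1^p,\dots,s_{n-1}^p,s_n^{p^m}$ (asserted without a detailed independence argument), then passes to $k[\bss,\sigma]$ and reduces $g(\bss)^2(s_n^p-\sigma^2)$ modulo $\fm I$ to $g(0)^2(s_n^p-\sigma^2)$, arriving at the explicit value $b_{n+1}=g(0)^2$.

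Your approach is cleaner and actually justifies the linear independence, at the cost of a mild appeal to the height theorem. The paper's approach is more hands-on and has the minor bonus of exhibiting $b_{n+1}$ explicitly, though this explicit value is not used later. One small point: your parenthetical alternative cites the regular sequence of \intref{Construction}{con:powerseries}, which lives in the power series ring $P$ rather than in $R_\fm$; the transfer is routine (completion or direct verification), but worth a word if you keep that sentence.
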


\begin{proof}
With $J$ as in \eqref{eq:J},  the action of $k[\bss]$ on $J/{\bss}J$ factors through the quotient $k[\bss]/(\bss)\cong k$, so $J/{\bss}J$  is a $k$-vector space with basis the residue classes of elements
\[
s_1^{p},\dots, s_{n-1}^p, s_n^{p^m}\,.
\]
Thus each element of $J$  is equivalent modulo ${\bss}J$ to an element of the form
\[
b_1 s_1^{p} +\cdots + b_{n-1}s_{n-1}^p + b_n s_n^{p^m}
\]
for uniquely defined elements $b_1,\dots,b_n$ in $k^n$. This applies in particular to the element $f(\bss)^p - g(\bss)^2s_n^p $, which is in $J$ by our hypothesis. Observe that in the ring $k[\bss,\sigma]$ the extended ideal ${\bss}Jk[\bss, \sigma]$ is contained in $\fm I$, for $\bss\subset \fm$ and $J\subset I$.  Therefore in the ring $k[\bss,\sigma]$ we get that
\begin{align*}
f(\bss)^p  - g(\bss)^2\sigma^2 
	&= f(\bss)^p - g(\bss)^2s_n^p  + g(\bss)^2(s_n^p- \sigma^2 )   \\
	&\equiv b_1 s_1^{p} + \cdots + b_{n-1} s_{n-1}^p +b_n s_n^{p^m} + g(\bss)^2(s_n^p- \sigma^2)  \\
	&\equiv b_1 s_1^{p} + \cdots + b_{n-1} s_{n-1}^p +b_n s_n^{p^m} + g(0)^2(s_n^p- \sigma^2) 
\end{align*}
where the equivalence is modulo $\fm I$; for the last step observe that ${\bss}(s_n^p-\sigma^2)\subset \fm I$.  Setting $b_{n+1}= g(0)^2$ completes the proof. 
\end{proof}

\begin{proposition}
\label{pr:reps}
Assume $k$ is closed under taking $p$th roots and square roots.  Let $\alpha\colon \test\to k\mcE_k$ be a map of  $k$-algebras and $(b_1,\dots,b_{n+1})\in \bbA^{n+1}(k)$  associated to $\alpha$  by \intref{Lemma}{le:reps}.   Set $a_i = b_i^{1/p}$ for $1\le i\le n$ and $a_{n+1} = b_{n+1}^{1/2p}$, and  $\alpha_{\bsa}\colon \test\to k\mcE$   the morphism of $k$-algebras defined in \eqref{eq:alpha}. The following statements hold.
\begin{enumerate}[\quad\rm(1)]
\item
The map $\alpha$ is a $\pi$-point if and only if $(a_1,\dots,a_{n+1})\ne 0$.
\item
When $\alpha$ is a $\pi$-point, and $M$ is a $k\mcE$-module, the following are equivalent:
\begin{enumerate}[\quad\rm(a)]
\item
$\alpha^*(M)$ has finite flat dimension;
\item
$\alpha_{\bsa}^*(M)$ has finite flat dimension.
\end{enumerate}
\end{enumerate}
\end{proposition}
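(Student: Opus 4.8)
The plan is to pass through the ``normal form'' associated to a $k$-algebra map $\alpha\colon\test\to k\mcE$ by \intref{Lemma}{le:reps}, to identify it with the hypersurface polynomial \eqref{eq:hypersurface}, and then to feed this into \intref{Corollary}{co:factorisation} and the support-set theory of \intref{Section}{se:supportsets}. The first step is a direct computation. Represent $\alpha_{\bsa}$, in the sense of \intref{Construction}{con:factorisation}, by the polynomials $f_{\bsa}(\bss)=a_1s_1+\cdots+a_{n-1}s_{n-1}+a_ns_n^{p^{m-1}}+a_{n+1}^2s_n$ and $g_{\bsa}(\bss)=a_{n+1}^p$. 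Since the Frobenius is additive in characteristic $p$,
\begin{align*}
f_{\bsa}(\bss)^p-g_{\bsa}(\bss)^2\sigma^2
 &= a_1^ps_1^p+\cdots+a_{n-1}^ps_{n-1}^p+a_n^ps_n^{p^m}+a_{n+1}^{2p}(s_n^p-\sigma^2)\\
 &= h_{\bsb}
\end{align*}
on the nose, where $b_i=a_i^p$ for $1\le i\le n$ and $b_{n+1}=a_{n+1}^{2p}$, and $h_{\bsb}$ is the polynomial of \eqref{eq:hypersurface}. Hence $P_{\alpha_{\bsa}}=P/(h_{\bsb})=P_{\bsb}$ and $\bar\alpha_{\bsa}=\beta_{\bsb}$, the very maps used to define $\hysupp_\mcE$. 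For an arbitrary $\alpha$ with attached polynomials $f(\bss),g(\bss)$, the tuple produced by \intref{Lemma}{le:reps} is this same $(b_1,\dots,b_{n+1})$, and $f(\bss)^p-g(\bss)^2\sigma^2\equiv h_{\bsb}$ modulo $\fm I$.

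For part (1): by \intref{Corollary}{co:factorisation} the map $\alpha$ is a $\pi$-point if and only if $f(\bss)^p-g(\bss)^2\sigma^2\notin\fm I$, equivalently $h_{\bsb}\notin\fm I$ by the congruence above. Since $I$ is minimally generated by the $n+1$ elements $s_1^p,\dots,s_{n-1}^p,s_n^{p^m},s_n^p-\sigma^2$ (\intref{Construction}{con:powerseries}), their residues form a basis of the $k$-vector space $I/\fm I$, in which the residue of $h_{\bsb}$ is $(b_1,\dots,b_{n+1})$. So $\alpha$ is a $\pi$-point if and only if $(b_1,\dots,b_{n+1})\ne0$, which holds if and only if $(a_1,\dots,a_{n+1})\ne0$ since $a_i=0\Leftrightarrow b_i=0$. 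The same argument applied to $\alpha_{\bsa}$, whose attached polynomial is $h_{\bsb}$ itself, shows that $\alpha_{\bsa}$ is a $\pi$-point precisely when $\bsa\ne0$.

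For part (2): assume $\alpha$ is a $\pi$-point, so $\bsa\ne0$ by (1) and hence $\alpha_{\bsa}$ is a $\pi$-point too, and neither map factors through $\test\to k$. Applying \intref{Theorem}{th:factorisation} to $\alpha$ and to $\alpha_{\bsa}$ in turn, and noting that $\bar\alpha^*(M)$ and $\bar\alpha_{\bsa}^*(M)$ are just $M$ regarded as a module over $P_\alpha$ and over $P_{\alpha_{\bsa}}=P_{\bsb}$ respectively, condition (a) is equivalent to $\fdim_{P_\alpha}M<\infty$ while condition (b) is equivalent to $\fdim_{P_{\bsb}}M<\infty$, that is, to $\bsb\notin\hysupp_\mcE(M)$ by \eqref{eq:Av-variety}. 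Thus it remains to show $\fdim_{P_\alpha}M<\infty$ if and only if $\fdim_{P_{\bsb}}M<\infty$. Here $P_\alpha=P/(w)$ and $P_{\bsb}=P/(h_{\bsb})$ are hypersurface quotients of $P$ cut out by elements $w\colonequals f(\bss)^p-g(\bss)^2\sigma^2$ and $h_{\bsb}$ of $I$ that are minimal generators of $I$ and that agree modulo $\fm I$. The point is that, by the support-set theory for complete intersections developed in \intref{Appendix}{se:appendix} (see \intref{Theorem}{th:Av}), for a $k\mcE$-module $M$ and an element $w\in I\setminus\fm I$ the finiteness of $\fdim_{P/(w)}M$ depends only on the class of $w$ in $I/\fm I$. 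As $w$ and $h_{\bsb}$ have the same class, the two flat dimensions are finite simultaneously, giving (a)$\Leftrightarrow$(b).

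The step I expect to be the crux is the last invariance statement: that finiteness of $\fdim_{P/(w)}M$ is unchanged under altering $w$ within $\fm I$. This is the hypersurface-section form of Avramov--Buchweitz support, in the version that applies to arbitrary modules of finite Loewy length; it is exactly what makes $\hysupp_\mcE$ well defined and is provided by \intref{Appendix}{se:appendix}. Everything else is the Frobenius identity above together with bookkeeping using \intref{Theorem}{th:factorisation} and \intref{Corollary}{co:factorisation}.
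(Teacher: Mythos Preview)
Your argument for part~(1) and the reduction in part~(2) via \intref{Theorem}{th:factorisation} match the paper's proof. The congruence $f(\bss)^p-g(\bss)^2\sigma^2\equiv f_{\bsa}(\bss)^p-g_{\bsa}(\bss)^2\sigma^2\pmod{\fm I}$ is exactly what the paper establishes, and you have correctly isolated the crux: that for $w,w'\in I\setminus\fm I$ with $w\equiv w'\pmod{\fm I}$, one has $\fdim_{P/(w)}M<\infty$ if and only if $\fdim_{P/(w')}M<\infty$, for an arbitrary $k\mcE$-module $M$.

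The gap is in how you justify this last step. You appeal to \intref{Appendix}{se:appendix} and \intref{Theorem}{th:Av}, claiming this invariance ``is exactly what makes $\hysupp_\mcE$ well defined.'' That is not so. The support set $\hysupp_\mcE(M)$ is defined using the \emph{specific} representatives $h_{\bsa}$ of \eqref{eq:hypersurface}; the passage from $\bbA^{n+1}\setminus\{0\}$ to $\bbP^n$ only uses that $(h_{\lambda\bsa})=(h_{\bsa})$ as ideals, which is trivial and says nothing about perturbing $h_{\bsa}$ by an element of $\fm I$. Neither \intref{Theorem}{th:Av} nor \intref{Theorem}{th:appendix} addresses the finiteness of $\fdim_{P/(w)}M$ for a single $w$; they only characterise when the \emph{entire} support set is empty. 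So the Appendix does not supply the invariance you need, and your sketch does not indicate an independent proof of it (especially for infinitely generated $M$, which is part of the statement).

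The paper closes this gap by invoking an external result, \cite[Theorem~2.1(2)]{Avramov/Iyengar:2018a}, which is precisely the statement that $\fdim_{P/(w)}M$ depends, for finiteness, only on the class of $w$ in $I/\fm I$, valid for arbitrary modules. Once you cite that, your proof is complete and coincides with the paper's.
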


\begin{proof}
\intref{Lemma}{le:reps} implies that $(b_1,\dots,b_{n+1})\ne 0$ holds if and only if $f(\bss)^p-g(\bss)^2\sigma^2$ is not in $\fm I$. Thus (1) is a consequence of \intref{Corollary}{co:factorisation}. 

Let $(f_{\bsa}(\bss),g_{\bsa}(\bss))$ be the polynomials defining the map $\alpha_{\bsa}$; thus
\begin{align*}
f_{\bsa}(\bss) &=  a_1 s_1  + a_{n-1}s_{n-1} +a_n  s_n^{p^{m-1}}  + \bsa^2_ns_n \\
g_{\bsa}(\bss) &= a_{n+1}^p\,.
\end{align*}
Therefore by the choice of the $a_i$ and \intref{Lemma}{le:reps} one has that
\[
 f(\bss)^p-g(\bss)^2\sigma^2 \equiv f_{\bsa}(\bss)^p-g_{\bsa}(\bss)^2\sigma^2  \pmod {\fm I}\,.
 \]
The desired result is thus a special case of \cite[Theorem~2.1(2)]{Avramov/Iyengar:2018a}.
\end{proof}

Next we record the following consequence of the preceding results; this is the main outcome of this section, as far as the sequel is concerned.

\begin{theorem}
\label{th:detection}
Let $M$ be a $k\mcE$-module. The following conditions are equivalent:
\begin{enumerate}[\quad\rm(1)]
\item
$M$ is projective;
\item
$\alpha^*(M_K)$ has finite flat dimension for every $\pi$-point $\alpha\colon \test_K\to K\mcE_K$;
\item
$\alpha^*(M^K)$ has finite flat dimension for every $\pi$-point $\alpha\colon \test_K\to K\mcE_K$.
\end{enumerate}
\end{theorem}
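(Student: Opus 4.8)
The plan is to deduce this from the support-set criterion \intref{Theorem}{th:support-sets} together with the factorisation result \intref{Theorem}{th:factorisation}, using the standard $\pi$-points $\alpha_{\bsa}$ of \eqref{eq:alpha} as the bridge between the two: after taking $p$th powers, the hypersurface $\bar\alpha_{\bsa}$ attached to $\alpha_{\bsa}$ in the factorisation \eqref{eq:factor-alpha} is exactly one of the hypersurfaces $\beta_{\bsb}$ entering the definition of $\hysupp_{\mcE}(-)$.

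For the implications $(1)\Rightarrow(2)$ and $(1)\Rightarrow(3)$ I would argue directly, without reference to support sets. As $k\mcE$ is local artinian, a projective module $M$ is free, so $M_K$ is a free $K\mcE_K$-module; and $M^K=\Hom_{k\mcE}(K\mcE_K,M)$ is injective over $K\mcE_K$, because $\Hom_{k\mcE}(K\mcE_K,-)$ is right adjoint to the exact restriction functor and therefore preserves injectives, while $M$ is injective over the self-injective algebra $k\mcE$; since $K\mcE_K$ is self-injective as well, $M^K$ is then projective over $K\mcE_K$. Thus $M_K$, respectively $M^K$, is a direct summand of a free $K\mcE_K$-module. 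For any $\pi$-point $\alpha\colon\test_K\to K\mcE_K$ the module $K\mcE_K$ has finite flat dimension over $\test_K$ by definition, and flat dimension does not increase under forming direct sums or passing to direct summands; hence $\alpha^*(M_K)$ and $\alpha^*(M^K)$ have finite flat dimension.

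For $(2)\Rightarrow(1)$ I would argue contrapositively, the implication $(3)\Rightarrow(1)$ being identical, with $M^K$ in place of $M_K$ and part $(3)$ of \intref{Theorem}{th:support-sets} in place of part $(2)$. So assume $M$ is not projective. By \intref{Theorem}{th:support-sets} there is an algebraically closed field extension $K$ of $k$ with $\hysupp_{\mcE_K}(M_K)\ne\varnothing$, containing say the point $[b_1,\dots,b_{n+1}]$; equivalently, the restriction $\beta_{\bsb}^*(M_K)$ of $M_K$ along the hypersurface $\beta_{\bsb}\colon P/(h_{\bsb})\to K\mcE_K$ of \eqref{eq:hypersurface}, formed over $K$, has infinite flat dimension. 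Since $K$ is algebraically closed I may set $a_i\colonequals b_i^{1/p}$ for $1\le i\le n$ and $a_{n+1}\colonequals b_{n+1}^{1/2p}$, so that $\bsa\colonequals(a_1,\dots,a_{n+1})\ne 0$, and form the standard $\pi$-point $\alpha_{\bsa}\colon\test_K\to K\mcE_K$ of \eqref{eq:alpha}, which is indeed a $\pi$-point by \intref{Proposition}{pr:reps} since $\bsa\ne 0$. The one computation to carry out is short, being just additivity of $p$th powers in characteristic $p$: in the notation of \intref{Construction}{con:factorisation}, the polynomials $f_{\bsa}(\bss)=a_1s_1+\cdots+a_{n-1}s_{n-1}+a_ns_n^{p^{m-1}}+a_{n+1}^2s_n$ and $g_{\bsa}(\bss)=a_{n+1}^p$ defining $\alpha_{\bsa}$ satisfy
\[
f_{\bsa}(\bss)^p-g_{\bsa}(\bss)^2\sigma^2 = b_1s_1^p+\cdots+b_{n-1}s_{n-1}^p+b_ns_n^{p^m}+b_{n+1}(s_n^p-\sigma^2) = h_{\bsb}(\bss,\sigma)\,.
\]
Hence $P_{\alpha_{\bsa}}=P/(h_{\bsb})$ and $\bar\alpha_{\bsa}=\beta_{\bsb}$ in \eqref{eq:factor-alpha}. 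As $\alpha_{\bsa}$ is a $\pi$-point it does not factor through $\test_K\to K$, so \intref{Theorem}{th:factorisation} applies and shows that $\alpha_{\bsa}^*(M_K)$ has finite flat dimension if and only if $\beta_{\bsb}^*(M_K)$ does; the latter is infinite, so $\alpha_{\bsa}$ witnesses the failure of $(2)$, as desired.

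The only real point of care is not so much an obstacle as a calibration: $\pi$-points defined over $k$ itself do \emph{not} detect projectivity in general, and one must pass to an extension containing the necessary $p$th and $2p$th roots --- which is precisely why \intref{Theorem}{th:support-sets} is phrased over algebraically closed extensions. Once one sees that the exponents appearing in \eqref{eq:alpha} are chosen exactly so that $f_{\bsa}(\bss)^p-g_{\bsa}(\bss)^2\sigma^2$ sweeps out all of the hypersurfaces $h_{\bsb}$ after extraction of roots, the theorem becomes a formal consequence of the two cited results together with the elementary bookkeeping of flat dimensions in the first two paragraphs.
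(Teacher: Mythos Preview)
Your proof is correct and follows essentially the same route as the paper's: reduce to standard $\pi$-points $\alpha_{\bsa}$, compute that the associated hypersurface $P_{\alpha_{\bsa}}$ is $P/(h_{\bsb})$ with $\bsb=(a_1^p,\dots,a_n^p,a_{n+1}^{2p})$, and invoke \intref{Theorem}{th:support-sets}. The paper compresses this into a single appeal to \intref{Proposition}{pr:reps}(2), whose proof contains the same Frobenius computation you carry out explicitly (there phrased as a congruence modulo $\fm I$ and justified via \cite[Theorem~2.1(2)]{Avramov/Iyengar:2018a}, rather than via \intref{Theorem}{th:factorisation} directly as you do).

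The one genuine, if minor, difference is your treatment of $(1)\Rightarrow(2),(3)$: you argue directly that projectivity of $M$ forces $M_K$ free and $M^K$ injective-hence-projective over $K\mcE_K$, so that restriction along any $\pi$-point has finite flat dimension by definition. The paper instead leans on the equivalence with support sets in both directions, which tacitly requires knowing that \emph{arbitrary} $\pi$-points are controlled by standard ones (\intref{Proposition}{pr:reps}); your direct argument sidesteps that reduction for the easy implication and is a bit cleaner for it.
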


\begin{proof}
  For any extension field $K$ of $k$ and
  $\bsa = (a_1,\dots,a_{n+1}) \in \bbA^{n+1}(K)$, and for the
  $\pi$-point $\alpha_{\bsa}\colon \test_K\to K\mcE$ described by
  \eqref{eq:alpha}, one has that $\alpha_{\bsa}^*(M_K)$, respectively
  $\alpha_{\bsa}^*(M^K)$, has finite flat dimension if and only if
  $(a_1^p,\dots,a_n^p,a_{n+1}^{2p})$ is in $\hysupp_{\mcE_K}(M_K)$,
  respectively, in $\hysupp_{\mcE_K}(M^K)$. This claim is a direct
  consequence of \intref{Proposition}{pr:reps}(2).  Thus the desired
  equivalence follows from \intref{Theorem}{th:support-sets}.
\end{proof}

\section{The rank variety}
\label{se:rank}
In this section, we restrict our attention to finitely generated $k\mcE$-modules, and describe the analogue of Carlson's rank variety~\cite{Carlson:1983a}, relating it to support sets introduced in \intref{Section}{se:supportsets}. The field $k$, always of positive characteristic $p\ge 3$, will be algebraically closed.

\begin{definition}
For a point $\bsa = (a_1,\dots,a_{n+1})$ in $\bbA^{n+1}(k)$  let $\alpha_{\bsa} \colon \test\to k\mcE$ be the $\pi$-point defined in \eqref{eq:alpha}. 

Let $M$ be a finitely generated $k\mcE$-module and consider the square
zero  map
 \begin{equation}
\label{eq:Mmatrix}
  \begin{pmatrix} 
  \alpha_{\bsa}(\tau) & \alpha_{\bsa}(t) \\ 
  -\alpha_{\bsa}(t)^{p-1} &    -\alpha_{\bsa}(\tau) 
  \end{pmatrix} \colon M\oplus M \lra M\oplus M\,.
 \end{equation} 
The rank of this map is at most that of $M$; see \intref{Theorem}{th:rank}. The \emph{rank variety} of $M$, denoted $V^r_\mcE(M)$, is the subset of $\bbA^{n+1}(k)$ consisting those $\bsa$ for which the map in \eqref{eq:Mmatrix} fails to have maximal rank, in the sense of \intref{Definition}{de:maximal-rank}. In particular,  $0$ is in the rank variety of any $M$.
\end{definition}

\begin{remark}
In contrast with support sets the rank varieties are introduced as subsets of affine space rather than projective space. Observe that swapping the diagonal entries in the matrix \eqref{eq:Mmatrix} yields the matrix corresponding to the point $(a_1,\dots, -a_{n+1})$. In other words, the rank variety on $M$ is invariant under the $\bbZ/2$ action on the last coordinate that sends $a_{n+1}$ to $-a_{n+1}$.  The intervention of this automorphism is not surprising. It plays a role in Deligne's work on Tannakian categories~\cite[Section~8]{Deligne:1990a}, where it is the generator for the fundamental group of the category of super vector spaces.  In his description~\cite{Deligne:2002a} of symmetric tensor abelian categories of moderate growth in characteristic zero, this central involution is an essential ingredient.

There is more: Consider the action of $k^\times$ on $\bbA^{n+1}(k)$ given by
\begin{equation}
\label{eq:k-action}
\lambda(a_1,\dots,a_{n+1})\colonequals (\lambda^2 a_1,\dots, \lambda^2 a_n, \lambda a_{n+1})
\end{equation}
and the map $F\colon \bbA^{n+1}(k)\to \bbP^{n}(k)$ where 
\[
(a_1,\dots,a_{n+1}) \mapsto [a_1^p,\dots, a_n^p,a_{n+1}^{2p}]\,.
\]
Observe that $F(\lambda \bsa) = \lambda^{2p} F(\bsa)$. This observation is used in the proof of the result below, in which part (1) compares the rank variety of $M$ to its support set~\eqref{eq:Av-variety}.
\end{remark}

\begin{theorem}
\label{th:carlson}
  Let $M$ be a finitely generated $k\mcE$-module.
  \begin{enumerate}[\quad\rm(1)]
  \item
   The rank variety $V^r_\mcE(M)$ is a closed subset of $\bbA^{n+1}(k)$, homogeneous for the action of $k^\times$ on $\bbA^{n+1}$ defined in \eqref{eq:k-action}.
  \item
  A point $\bsa\in \bbA^{n+1}(k)\setminus\{0\}$ is in $V^r_{\mcE}(M)$ if and only if $F(\bsa)$ is in $\hysupp_{\mcE}(M)$. 
   \item
  The $k\mcE$-module $M$ is projective if and only if $V^r_\mcE(M)=\{0\}$.
     \end{enumerate}
\end{theorem}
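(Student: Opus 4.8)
The plan is to prove the three parts in order, using part~(2) as the bridge: part~(1) is a direct matrix computation, and part~(3) follows from part~(2) together with \intref{Theorem}{th:support-sets}.

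For part~(1), fix a $k$-basis of $M$ and observe that the entries of the map \eqref{eq:Mmatrix}, written as a matrix, are polynomial in $\bsa=(a_1,\dots,a_{n+1})$: indeed $\alpha_{\bsa}(t)=a_1s_1+\dots+a_{n-1}s_{n-1}+a_ns_n^{p^{m-1}}+a_{n+1}^2s_n$ is a linear combination, with coefficients of degree at most two in $\bsa$, of fixed operators on $M$, and $\alpha_{\bsa}(\tau)=a_{n+1}^p\sigma$; hence $\alpha_{\bsa}(t)^{p-1}$ and $\alpha_{\bsa}(\tau)$ have polynomial entries too. Then $V^r_\mcE(M)$, being the set of $\bsa$ where this $2\rank_kM\times 2\rank_kM$ matrix has rank at most $\rank_kM-1$, is the common zero locus of its $(\rank_kM)$-minors, so it is Zariski closed. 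For homogeneity under \eqref{eq:k-action}: replacing $\bsa$ by $\lambda\bsa$ multiplies $\alpha_{\bsa}(t)$ by $\lambda^2$ and $\alpha_{\bsa}(\tau)$ by $\lambda^p$, so the map \eqref{eq:Mmatrix} for $\lambda\bsa$ is $\lambda^p$ times the conjugate of the one for $\bsa$ by the automorphism $\mathrm{diag}(\lambda^{p-2}\id_M,\id_M)$ of $M\oplus M$; neither operation changes the rank, so $\bsa\in V^r_\mcE(M)\Leftrightarrow\lambda\bsa\in V^r_\mcE(M)$.

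For part~(2): since $\alpha_{\bsa}(t)$ lies in the maximal ideal of the artinian local ring $k\mcE$, the element $t$ is nilpotent on $\alpha_{\bsa}^*(M)$, and \eqref{eq:Mmatrix} is exactly the map \eqref{eq:2x2} for this (graded, finite rank) $\test$-module; by \intref{Theorem}{th:rank}, a nonzero $\bsa$ lies in $V^r_\mcE(M)$ if and only if $\alpha_{\bsa}^*(M)$ has infinite flat dimension. Now for $\bsa\ne 0$ the map $\alpha_{\bsa}$ does not factor through $\test\to k$, and a direct computation using $(x+y)^p=x^p+y^p$ shows the defining polynomial $f_{\bsa}(\bss)^p-g_{\bsa}(\bss)^2\sigma^2$ of the hypersurface $P_{\alpha_{\bsa}}$ from \intref{Construction}{con:factorisation} is literally $h_{F(\bsa)}$, where $F(\bsa)=[a_1^p,\dots,a_n^p,a_{n+1}^{2p}]$, so that $\bar\alpha_{\bsa}=\beta_{F(\bsa)}$. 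Hence \intref{Theorem}{th:factorisation} (or \intref{Proposition}{pr:reps}) gives that $\alpha_{\bsa}^*(M)$ has finite flat dimension if and only if $\beta_{F(\bsa)}^*(M)$ does, i.e.\ if and only if $F(\bsa)\notin\hysupp_\mcE(M)$. Combining the two equivalences, $\bsa\in V^r_\mcE(M)\Leftrightarrow F(\bsa)\in\hysupp_\mcE(M)$.

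For part~(3): if $M$ is projective, then $\hysupp_\mcE(M)=\varnothing$ by \intref{Theorem}{th:support-sets} (taking $K=k$, legitimate since $k$ is algebraically closed and $\rank_kM<\infty$), so by part~(2) no nonzero $\bsa$ lies in $V^r_\mcE(M)$, whence $V^r_\mcE(M)=\{0\}$. Conversely, as $k$ is algebraically closed it is closed under $p$th and square roots, so $F$ maps $\bbA^{n+1}(k)\setminus\{0\}$ onto $\bbP^n(k)$; thus $V^r_\mcE(M)=\{0\}$ forces $\hysupp_\mcE(M)=\varnothing$ by part~(2), and \intref{Theorem}{th:support-sets} again yields that $M$ is projective. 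The step I expect to be most delicate is the identification in part~(2) of $P_{\alpha_{\bsa}}$ with the hypersurface $P_{F(\bsa)}$ and the resulting alignment of \intref{Theorem}{th:rank} with the support set \eqref{eq:Av-variety}, since one must carefully track the Frobenius-and-squaring twist encoded in $F$; the remaining arguments are either formal or the elementary closedness-and-homogeneity computation of part~(1).
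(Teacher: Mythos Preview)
Your proof is correct and broadly follows the paper's strategy, but with one notable divergence worth recording. For the closedness part of~(1) and for part~(3) you argue exactly as the paper does. For part~(2) you are slightly more explicit than the paper: rather than invoking \intref{Proposition}{pr:reps}(2), you compute directly that
\[
f_{\bsa}(\bss)^p - g_{\bsa}(\bss)^2\sigma^2 = a_1^p s_1^p + \cdots + a_{n-1}^p s_{n-1}^p + a_n^p s_n^{p^m} + a_{n+1}^{2p}(s_n^p-\sigma^2) = h_{F(\bsa)}(\bss,\sigma),
\]
so that $\bar\alpha_{\bsa}=\beta_{F(\bsa)}$ and \intref{Theorem}{th:factorisation} applies immediately. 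This is the same content, just unpacked.

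The genuine difference is in the homogeneity claim of part~(1). The paper proves~(2) first and then deduces homogeneity from the identity $F(\lambda\bsa)=\lambda^{2p}F(\bsa)$ in $\bbP^n(k)$; indeed, in the remark following the theorem the authors write that a direct proof ``seems complicated''. You supply exactly such a direct proof: replacing $\bsa$ by $\lambda\bsa$ sends the matrix \eqref{eq:Mmatrix} to $\lambda^p$ times its conjugate by $\mathrm{diag}(\lambda^{p-2}\id_M,\id_M)$, and neither operation affects rank. (One checks that with $D=\mathrm{diag}(\lambda^{p-2}\id_M,\id_M)$ the map $\lambda^p\, D^{-1}(\,\cdot\,)D$ produces the correct powers $\lambda^p,\lambda^2,\lambda^{2p-2},\lambda^p$ in the four blocks.) This elementary observation makes part~(1) logically independent of part~(2), whereas the paper's order of proof is~(2), then~(1), then~(3). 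Your route is more self-contained for~(1); the paper's route explains the homogeneity conceptually via the map $F$.
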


\begin{proof}
The first part of (2) is immediate from the description of the two
varieties and \intref{Proposition}{pr:reps}(2).

(1) The condition for a point $\bsa$ to be in $V^r_\mcE(M)$ is given by the vanishing of the minors of the matrix \eqref{eq:Mmatrix} of size equal to $\rank_kM$. These are polynomial relations in the coordinates, so $V^r_{\mcE}(M)$ is closed under the Zariski topology on $\bbA^{n+1}(k)$.  

Fix $\bsa$ in $V^r_{\mcE}(M)\setminus\{0\}$, so that $F(\bsa)$ is in $\hysupp_{\mcE}(M)$, by (2).  We deduce that the point $F(\lambda\bsa) = \lambda^{2p}F(\bsa)$ is in $\hysupp_{\mcE}(M)$ as well. Thus $\lambda\bsa$ is in $V^r_{\mcE}(M)$, again by (2). This justifies the assertion that the rank variety is homogeneous.

(3) The map $F$ is onto because the field $k$ is algebraically
closed. Thus (3) is immediate from (2) and the fact that $\hysupp_{\mcE}(M)=\varnothing$ if and only if $M$ is projective; see \intref{Theorem}{th:support-sets}.
\end{proof}

\begin{remark}
The matrix \eqref{eq:Mmatrix} corresponding to $\lambda \bsa$ is 
\[
  \begin{pmatrix} 
  \lambda^p\alpha_{\bsa}(\tau) & \lambda^2\alpha_{\bsa}(t) \\ 
  -\lambda^{2p-2}\alpha_{\bsa}(t)^{p-1} &    -\lambda^p\alpha_{\bsa}(\tau) 
  \end{pmatrix} 
\]
It follows from \intref{Theorem}{th:carlson}(2) that this matrix has
maximal rank if and only if the one in \eqref{eq:Mmatrix} does. Thus
the subset $V^r_{\mcE}(M)$ of $\bbA^{n+1}(k)$ is homogenous for the
$k^{\times}$-action in \eqref{eq:k-action}. A direct proof of this
observation seems complicated.
\end{remark}

\section{$\pi$-points and cohomology}
\label{se:pi-and-cohomology}
In this section we set up the equivalence relation on $\pi$-points for $k\mcE$ and the bijection between their equivalence classes and points in the weighted projective space $\Proj H^{*,*}(\mcE,k)$. The development is modelled on the one for group schemes due to Friedlander and Pevtsova~\cite{Friedlander/Pevtsova:2007a}.

Given  a $\pi$-point $\alpha\colon \test_K\to K\mcE$ the restriction functor 
\[
\alpha^*\colon \Mod(K\mcE_K) \to \Mod(\test_K)
\]
takes projective modules to modules of finite flat dimension. Composed with the approximation functor, it induces an exact functor 
\begin{equation}
\label{eq:alpha*}
\alpha^*\colon \StMod(K\mcE_K)\lra \uGProj(\test_K). 
\end{equation}
This need not be compatible with the tensor structures, for $\alpha$ need not be a morphism of Hopf algebras.

\begin{definition} 
If $\alpha\colon \test_K \to K\mcE_K$ is a $\pi$-point of $\mcE$, we write $H^{*,*}(\alpha)$ for the composition of homomorphism of $k$-algebras
\[ 
H^{*,*}(\mcE,k) = \Ext^{*,*}_{k\mcE}(k,k) \xrightarrow{K \otimes_k -}
\Ext^{*,*}_{K\mcE_K}(K,K) \xrightarrow{\alpha^*} \Ext^{*,*}_{\test_K}(K,K) 
\]
The cohomology of the algebra $\test$ was described in \eqref{eq:Acohomology}; modulo its radical it is a domain. Thus the radical of the kernel of $H^{*,*}(\alpha)$ is a prime ideal, denoted $\fp(\alpha)$. 
\end{definition}

\begin{example}
Fix $\bsa \in \bbA^{n+1}(k)$ and let $\alpha_{\bsa}$ be the $\pi$-point described in \eqref{eq:alpha}.  In the notation of  \eqref{eq:Ecohomology} and \eqref{eq:Acohomology}, the  map induced in cohomology by $\alpha_{\bsa}$ is given by 
\begin{alignat*}{4}
&\alpha^*_{\bsa}(u_i) = a_i v&  \quad &\text{for $1\le i\le n-1$}& \quad\text{and}\quad & \alpha^*_{\bsa}(u_n) = a_{n+1}^2 v \\
&\alpha^*_{\bsa}(x_i) = a_i^p \eta^2& \quad &\text{for $1\le i\le n$}&\quad\text{and}\quad   &  \alpha^*_{\bsa}(\zeta) = a_{n+1}^p \eta.
\end{alignat*}
Therefore the associated point is $\Proj H^*(\mcE,k)$ is
\begin{equation}
\label{eq:alpha-prime}
\fp(\alpha_{\bsa}) = 
\begin{cases}
\left(a^{2p}_{n+1}x_i - a^p_i \zeta^2\mid 1\le i \le n\right)+(u_1,\cdots,u_n) & \text{if $a_{n+1}\ne 0$}\\
(\zeta)+ \left(a^p_i x_j -a^p_j x_i\mid 1\le i < j\le n\right)+(u_1,\cdots,u_n)   & \text{if $a_{n+1}= 0$.}
\end{cases}
\end{equation}
Compare this with \eqref{eq:beta-prime}. In particular, when $k$ is closed under taking  square roots and $p$th roots (for example, if $k$ is algebraically closed) each rational  point in $\Proj H^*(\mcE,k)$ occurs as $\fp(\alpha)$, for some $\pi$-point $\alpha$. Here is the general statement.
\end{example}

\begin{proposition}
Given a point $\fp$ in $\Proj H^{*,*}(\mcE,k)$, there exists a field extension $K$ of $k$ and a $\pi$-point $\alpha\colon \test_K\to K\mcE_K$ of the
form~\eqref{eq:alpha} such that $\fp(\alpha)=\fp$.
\end{proposition}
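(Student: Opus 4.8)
The plan is to reduce to the case where $k$ is closed under square roots and $p$th roots, apply the explicit computation of $\fp(\alpha_{\bsa})$ in \eqref{eq:alpha-prime}, and then adjust the base field so that the resulting prime is the prescribed one.

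First I would extend $k$ to a field $K$ that is closed under square roots and $p$th roots; any algebraically closed extension, for instance $\overline{k(\fp)}$, will do, but in fact it suffices to pick $K$ large enough that the prime $\fp$, after extension, has a $K$-rational point lying over it in $\Proj H^{*,*}(\mcE_K, K)$. Concretely, $\fp$ is a homogeneous prime of $H^{*,*}(\mcE,k)/\sqrt 0 = k[\bsx,\zeta]$ (the nilpotents $u_i$ are automatically in $\fp$), and since that ring is a polynomial ring, after passing to an algebraically closed extension $K$ of $k$ we can choose a closed point of $\Proj K[\bsx,\zeta]$ lying over the generic point of $V(\fp)$; call its homogeneous coordinates $[c_1,\dots,c_n,d]$ with $c_i$ the $x_i$-coordinate and $d$ the $\zeta$-coordinate. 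The defining prime of this closed point in $K[\bsx,\zeta]$ is then either $(d^2 x_i - c_i\zeta^2 \mid 1\le i\le n) + (u_1,\dots,u_n)$ when $d\ne 0$, or $(\zeta) + (c_i x_j - c_j x_i \mid i<j) + (u_1,\dots,u_n)$ when $d=0$ — matching exactly the two cases of \eqref{eq:alpha-prime}.

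Next I would solve for the parameters of the standard $\pi$-point. Comparing with \eqref{eq:alpha-prime}, I want $\bsa = (a_1,\dots,a_{n+1}) \in \bbA^{n+1}(K)$ with $a_i^p = c_i$ for $1\le i\le n$ and $a_{n+1}^{2p} = d$ (in the case $d\ne 0$; in the case $d=0$ one takes $a_{n+1}=0$ and $a_i^p = c_i$). Since $K$ is closed under $p$th roots and square roots, such $a_i$ exist: set $a_i = c_i^{1/p}$ and $a_{n+1} = d^{1/2p}$. Because $(c_1,\dots,c_n,d)$ represents a point of projective space it is nonzero, hence $\bsa\ne 0$, so by \intref{Proposition}{pr:reps}(1) (or by \eqref{eq:alpha}'s flatness criterion discussed after it) the map $\alpha_{\bsa}\colon \test_K \to K\mcE_K$ is genuinely a $\pi$-point. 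The computation recorded in \eqref{eq:alpha-prime} then gives $\fp(\alpha_{\bsa})$ equal to the chosen closed point's prime, which restricts to (lies over) the extension $\fp K[\bsx,\zeta]$ of $\fp$, and one checks this recovers $\fp$ itself under the identification of $\Proj H^{*,*}(\mcE,k)$ with its points.

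The one subtlety — and the part I would be most careful about — is the bookkeeping of what "$\fp(\alpha) = \fp$" means when $\alpha$ is defined over an extension field $K\ne k$: strictly $\fp(\alpha)$ is a prime of $\Ext^{*,*}_{\test_K}(K,K)$ pulled back to a prime of $\Ext^{*,*}_{K\mcE_K}(K,K) = K\otimes_k H^{*,*}(\mcE,k)$, and one must contract along $H^{*,*}(\mcE,k)\to K\otimes_k H^{*,*}(\mcE,k)$ to land back in $\Proj H^{*,*}(\mcE,k)$. So the precise claim is that the contraction of $\fp(\alpha_{\bsa})$ equals $\fp$, which is exactly the statement that the $K$-point we chose lies over $\fp$; this is arranged in the first step by construction. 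This homeomorphism $\Proj H^{*,*}\cong \Proj H^*$ and the reduction modulo nilpotents, both already established in the excerpt, make all these identifications legitimate, so the argument is essentially a matching of the formula \eqref{eq:alpha-prime} against the coordinates of a suitably chosen point over $\fp$.
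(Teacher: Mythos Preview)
Your argument is correct and follows essentially the same route as the paper's own proof: pass to a large enough extension $K$ of $k$ (the paper takes a perfect extension containing the function field of $\fp$), choose a closed point $[b_1,\dots,b_{n+1}]$ of $\Proj H^{*,*}(\mcE_K,K)$ lying over $\fp$, and extract the parameters $a_i$ by taking $p$th (and, for the last coordinate, $2p$th) roots so that \eqref{eq:alpha-prime} reproduces this point. Your treatment is in fact a bit more careful than the paper's about the $2p$th root in the last coordinate and about the contraction along $H^{*,*}(\mcE,k)\to K\otimes_k H^{*,*}(\mcE,k)$, but the underlying idea is identical.
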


\begin{proof}
 Choose a perfect field extension $K$ of $k$  containing the function field for $\fp$, that is, the degree $(0,0)$ elements of the graded field of fractions of $H^{*,*}(\mcE,k)/\fp$. Then there is a closed point $\fm\in \Proj H^{*,*}(\mcE_K,K)$ lying over $\fp$. In coordinates, this is given by $[b_1,\cdots,b_{n+1}]$  with not all the $b_i$ equal to zero. Setting $a_i=b_i^{1/p}$, the map $\alpha_{\bsa}$ defined by \eqref{eq:alpha} has $\fp(\alpha_{\bsa})=\fp$.
\end{proof}

The following definition is the analogue of~\cite[2.1]{Friedlander/Pevtsova:2007a}.

\begin{definition}
We say that $\pi$-points $\alpha\colon \test_K\to K\mcE_K$ and $\beta\colon \test_L\to L\mcE_L$  are \emph{equivalent}, and write  $\alpha\sim\beta$, if for each finitely generated $k\mcE$-module $M$, the $\test_K$-module $\alpha^*(M_K)$ has finite flat dimension if and only if the $\test_L$-module $\beta^*(M_L)$ does. 
\end{definition}

Other ways of expressing this equivalence relation are given in \intref{Theorem}{th:equiv}. In preparation for this, we recall the definition of Carlson's modules. For any non-zero element $\xi\in H^{2n,0}(\mcE,k)$ let $L_\xi$ be the kernel of a representative cocycle $\hat \xi\colon \Omega^{2n}(k) \to k$. Thus we have an exact sequence of $k\mcE$-modules
\begin{equation}
\label{eq:Carlson}
0 \to L_\xi \to \Omega^{2n}(k) \xrightarrow{\hat\xi} k \to 0 
\end{equation}
and a corresponding triangle $L_\xi \to \Omega^{2n}(k) \to k\to$ in $\StMod(k\mcE)$. The following result is analogous to \cite[Proposition~2.3]{Friedlander/Pevtsova:2005a} and \cite[Proposition~2.9]{Friedlander/Pevtsova:2007a}.

\begin{proposition}
\label{pr:Lxi}
Let $\alpha\colon \test_K\to K\mcE_K$ be a $\pi$-point. An element $\xi\in H^{2n,0}(\mcE,k)$ is not in $\fp(\alpha)$ if and only if $\alpha^*(L_\xi)_K$ has finite flat dimension. If $\alpha$, $\beta$ are $\pi$-points with  $\fp(\alpha)\ne \fp(\beta)$ then $\alpha$ and $\beta$ are inequivalent.
\end{proposition}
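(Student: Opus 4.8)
The plan is to prove the two assertions in sequence, the first being the heart of the matter and the second being a quick formal consequence.

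For the first statement, I would analyse the behaviour of the Carlson module $L_\xi$ under restriction along a $\pi$-point $\alpha\colon\test_K\to K\mcE_K$. Base-changing the defining exact sequence \eqref{eq:Carlson} along $K$ gives an exact sequence $0\to (L_\xi)_K\to \Omega^{2n}(k)_K\xrightarrow{\widehat\xi_K} K\to 0$ of $K\mcE_K$-modules, and $\Omega^{2n}(k)_K$ is a syzygy of $K$ over $K\mcE_K$, hence has finite flat dimension as an $\test_K$-module precisely when $K$ does — which never happens — so the interesting object really is the cocycle map. The key point is that $\alpha^*$ composed with the approximation functor sends $\Omega^{2n}(K)$ to $\Omega^{2n}$ of $K$ in $\uGProj(\test_K)$ (up to finite flat dimension), and under the identification of cohomology, the restricted cocycle $\alpha^*(\widehat\xi)\colon\Omega^{2n}(K)\to K$ represents the class $H^{*,*}(\alpha)(\xi)\in\Ext^{2n,0}_{\test_K}(K,K)$. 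Now $\Ext^{*,*}_{\test_K}(K,K)$ modulo its radical is $k[\eta]$ (after base change, still a polynomial ring in one variable), so a homogeneous class in positive degree is nilpotent if and only if it is zero, i.e. $\xi\in\fp(\alpha)$ if and only if $H^{*,*}(\alpha)(\xi)$ is nilpotent if and only if $\alpha^*(\widehat\xi)$ is zero in $\StMod(\test_K)$ (equivalently in $\uGProj(\test_K)$), modulo the subtlety that one should argue via a suitable power of $\xi$. When $\alpha^*(\widehat\xi)$ is nonzero, the triangle $\alpha^*(L_\xi)_K\to\Omega^{2n}\to K\to$ in $\uGProj(\test_K)$ forces $\alpha^*(L_\xi)_K\cong \Omega(\text{fibre of a nonzero self-map})$; because $\test_K$ has tame/finite representation type with the explicit AR-quiver recalled in \intref{Section}{se:test}, a nonzero map to $K$ out of $\Omega^{2n}(K)$ must be (stably) surjective enough that its kernel is projective, giving $\alpha^*(L_\xi)_K$ of finite flat dimension. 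Conversely, if $\alpha^*(\widehat\xi)=0$ in the stable category, the triangle splits and $\alpha^*(L_\xi)_K\cong \Omega^{2n}(K)\oplus\Omega K$ stably, which does not have finite flat dimension. This is the step I expect to be the main obstacle: one has to be careful that ``$\xi\notin\fp(\alpha)$'' is about the \emph{radical} of the kernel, so a priori one only controls some power $\xi^N$, and one must check that $L_{\xi^N}$ and $L_\xi$ have the same finite-flat-dimension behaviour under $\alpha^*$ — this follows from the standard octahedral argument relating $L_{\xi^N}$ to an iterated extension of $\Omega$-twists of $L_\xi$, together with the detection theorem \intref{Theorem}{th:detection} or directly from the thick-subcategory reasoning as in \intref{Theorem}{th:factorisation}.

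For the second statement, suppose $\fp(\alpha)\ne\fp(\beta)$. Then, without loss of generality, there is a homogeneous element $\xi\in H^{2n,0}(\mcE,k)$ — after replacing by a power and a coordinate to land in even cohomological degree, which the ring \eqref{eq:Ecohomology} permits since $\Proj H^{*,*}(\mcE,k)$ is generated in those degrees — with $\xi\in\fp(\beta)$ but $\xi\notin\fp(\alpha)$. Apply the first part: $\alpha^*(L_\xi)_K$ has finite flat dimension while $\beta^*(L_\xi)_L$ does not. Since $L_\xi$ is a finitely generated $k\mcE$-module, this exhibits a module witnessing $\alpha\not\sim\beta$, as required.

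A couple of technical remarks on what I would need to pin down before writing this cleanly: first, that the assignment $\xi\mapsto L_\xi$ is compatible with base change, i.e. $(L_\xi)_K\simeq L_{\xi_K}$ up to projectives in $\StMod(K\mcE_K)$, which is immediate from right-exactness of $K\otimes_k-$ and the fact that $\Omega$ commutes with flat base change here; second, that under $\alpha^*\colon\StMod(K\mcE_K)\to\uGProj(\test_K)$ the object $\alpha^*(\Omega^{2n}(K))$ is $\Omega^{2n}$ of the simple, which holds because $\alpha^*$ is exact and sends $K\mcE_K$ to a module of finite flat dimension, hence $\alpha^*$ sends syzygies to syzygies in $\uGProj(\test_K)$. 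With these in hand the cohomological identification $\alpha^*(\widehat\xi)\in\Ext^{2n,0}_{\test_K}(K,K)$ equals $H^{*,*}(\alpha)(\xi)$ is just functoriality of $\Ext$, and the rest is the representation theory of $\test_K$ recalled earlier.
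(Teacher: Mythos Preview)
Your approach is essentially the paper's, and it works, but you introduce two detours that the paper avoids. First, the worry about the radical and powers $\xi^N$ is unnecessary: in bidegree $(2n,0)$ the cohomology $H^{2n,0}(\test_K,K)$ modulo its nilradical is one-dimensional, spanned by $\eta^{2n}$, so $\xi\in\fp(\alpha)$ is simply equivalent to $\alpha^*(\xi)$ being zero modulo $\sqrt{0}$, with no need to pass to $L_{\xi^N}$ or invoke an octahedral argument. Second, rather than appealing to the AR-quiver to argue that a nonzero stable map $\Omega^{2n}(K)\to K$ has zero cone, the paper observes directly that any such nonzero element is a nonzero multiple of $\eta^{2n}$, and $\eta$ is an isomorphism in $\uGProj(\test_K)$; this gives the iso/zero dichotomy immediately. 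With that simplification your argument and the paper's coincide. One small terminological point: you write $\StMod(\test_K)$ in a couple of places, but $\test_K$ has Krull dimension one and is not self-injective, so the correct ambient category throughout is $\uGProj(\test_K)$, as you also say elsewhere.
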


\begin{proof}
Applying $(-)_K$ to the exact triangle defining $L_\xi$ yields a triangle 
\[
(L_\xi)_K \lra \Omega^{2n}(K) \lra K\lra
\]
in $\StMod(K\mcE_K)$. Apply $\alpha^*$ to this triangle, and keep in mind ~\eqref{eq:alpha*},  to get a triangle 
\[
\alpha^*(L_\xi)_K \lra \Omega^{2n}(\alpha^*(K)) \lra \alpha^*(K)\lra
\]
 in $\uGProj(\test)$. Now $H^{2n,0}(\test,K)$ is two dimensional, but its image in $H^{*,*}(\test,K)$ modulo $\sqrt{0}$ is  one dimensional. Elements outside $\sqrt{0}$ induce an isomorphism  from $\Omega^{2n}(\alpha^*(K))$  to $\alpha^*(K)$ in $\uGProj(\test)$.  So  $\alpha^*(L_\xi)_K$ has finite flat dimension if and only if it  is zero in $\uGProj(\test)$, which happens if and only if  $\alpha^*(\xi)$ is non-zero in $H^{2n,0}(\test,K)$ modulo $\sqrt{0}$, and this happens  if and only if $\xi\not\in\fp(\alpha)$.

If $\fp(\alpha)\ne \fp(\beta)$,  choose $\xi\in H^{2n,0}(\mcE,k)$ in $\fp(\beta)$ but not in $\fp(\alpha)$. Then $\beta^*(L_\xi)_L$ has infinite flat dimension while $\alpha^*(L_\xi)_K$ has finite flat dimension. Since $L_\xi$ is finitely generated,  it follows that $\alpha$ and $\beta$ are inequivalent.
\end{proof}

\begin{theorem}
\label{th:equiv}
Let $\mcE$ be an elementary supergroup scheme. Let $\alpha\colon \test_K\to K\mcE_K$, $\beta\colon \test_L\to L\mcE_L$ be $\pi$-points of $\mcE$. Then the following are  equivalent:
\begin{enumerate}[\quad\rm(1)]
\item 
$\alpha$ and $\beta$ are equivalent.
\item 
For all $k\mcE$-modules $M$,  the $\test_K$-module $\alpha^*(M_K)$ has finite flat dimension if and only if the $\test_L$-module  $\beta^*(M_L)$ has finite flat dimension.
\item 
For all $k\mcE$-modules $M$,  the $\test_K$-module $\alpha^*(M^K)$ has finite flat dimension if and only if the $\test_L$-module  $\beta^*(M^L)$ has finite flat dimension.
\item 
$\fp(\alpha)=\fp(\beta)$.
\end{enumerate}
Thus the map sending $\alpha$ to $\fp(\alpha)$ induces a bijection between the set of equivalence classes of $\pi$-points and the set $\Proj H^{*,*}(G,k)$.
\end{theorem}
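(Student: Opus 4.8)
The plan is to prove the implications $(1)\Rightarrow(2)$, $(2)\Rightarrow(4)$, $(4)\Rightarrow(1)$, and then handle $(3)$ separately by reducing it to the others; finally the bijection statement follows formally. The implication $(1)\Rightarrow(2)$ is the only subtle one among the first three, since $(1)$ only refers to \emph{finitely generated} $M$ while $(2)$ quantifies over all $M$; here I would use the fact that $\alpha^*$ and $\beta^*$ commute with filtered colimits and that a module has finite flat dimension if and only if all its finitely generated submodules (or, better, all finitely generated pieces of a suitable presentation) do — more precisely, I would invoke \intref{Theorem}{th:detection} together with a localisation/support argument, or appeal directly to the fact that over the artinian ring $k\mcE$ every module is a filtered colimit of its finite-dimensional submodules and flat dimension over $\test_K$ is detected by $\Tor$, which commutes with filtered colimits. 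The implication $(2)\Rightarrow(4)$ is the contrapositive of the second assertion in \intref{Proposition}{pr:Lxi}: if $\fp(\alpha)\ne\fp(\beta)$ then the finitely generated Carlson module $L_\xi$ for a suitable $\xi$ separates $\alpha$ and $\beta$, so $(2)$ fails already on finitely generated modules.

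For $(4)\Rightarrow(1)$ I would argue as follows: suppose $\fp(\alpha)=\fp(\beta)=:\fp$, and let $M$ be a finitely generated $k\mcE$-module. I want to show $\alpha^*(M_K)$ has finite flat dimension iff $\beta^*(M_L)$ does. The strategy is to compare both sides to a condition depending only on $\fp$ and $M$. Passing to an algebraically closed extension field $\Omega$ containing both $K$ and $L$ (using \intref{Theorem}{th:support-sets} and \intref{Theorem}{th:detection} to see that extending the field does not change whether flat dimension is finite, after base change), I reduce to $\pi$-points of the standard form $\alpha_{\bsa}$ of \eqref{eq:alpha} via \intref{Proposition}{pr:reps}. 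For standard $\pi$-points, \intref{Theorem}{th:detection} and its proof identify finiteness of flat dimension of $\alpha_{\bsa}^*(M_\Omega)$ with membership of $F(\bsa)=[a_1^p,\dots,a_n^p,a_{n+1}^{2p}]$ in $\hysupp_{\mcE_\Omega}(M_\Omega)$, a subset of $\bbP^n(\Omega)=\Proj H^{*,*}(\mcE_\Omega,\Omega)$; and the formula \eqref{eq:alpha-prime} shows $\fp(\alpha_{\bsa})$ is exactly the point $F(\bsa)$. So "$\alpha^*(M_K)$ has finite flat dimension" is equivalent to "the point $\fp\in\Proj H^{*,*}(\mcE,k)$ (after extending scalars to $\Omega$, the corresponding closed point lying over it) lies in $\hysupp_{\mcE_\Omega}(M_\Omega)$" — a condition manifestly independent of the particular $\pi$-point representing $\fp$. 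This gives $(4)\Rightarrow(1)$ and in fact $(4)\Rightarrow(2)$; combined with the trivial $(2)\Rightarrow(1)$ and $(1)\Rightarrow(2)$ above, and $(2)\Rightarrow(4)$, the equivalence of $(1),(2),(4)$ is complete.

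For $(3)$: the modules $M_K$ and $M^K$ are isomorphic over finite extensions (\intref{Definition}{de:base-change}), and since whether a $\pi$-point sees finite flat dimension is unaffected by further finite base change, one can replace $M_K$ by $M^K$ throughout the argument above; alternatively, \intref{Theorem}{th:detection} already packages the equivalence of the $M_K$- and $M^K$-formulations of projectivity, and the same Carlson-module and support-set machinery applies verbatim with $M^K$ in place of $M_K$. Thus $(3)$ joins the circle. Finally, the last sentence: the assignment $\alpha\mapsto\fp(\alpha)$ is well-defined on equivalence classes by $(1)\Leftrightarrow(4)$, it is injective by $(4)\Rightarrow(1)$, and it is surjective by \intref{Proposition}{} (the existence proposition preceding \intref{Definition}{} of the equivalence relation, which realises every $\fp\in\Proj H^{*,*}(\mcE,k)$ as $\fp(\alpha_{\bsa})$ for a standard $\pi$-point over a suitable field extension). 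Hence it is a bijection onto $\Proj H^{*,*}(\mcE,k)$.

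\textbf{Main obstacle.} The one step requiring genuine care is $(4)\Rightarrow(1)$ — specifically the field-change bookkeeping: one must check that if $\alpha$ is defined over $K$ and $\beta$ over $L$ with $\fp(\alpha)=\fp(\beta)$ as points of $\Proj H^{*,*}(\mcE,k)$, then after base change to a common large algebraically closed $\Omega$ the induced $\pi$-points are equivalent to standard ones landing in the \emph{same} point of $\Proj H^{*,*}(\mcE_\Omega,\Omega)$ lying over $\fp$, and that "finite flat dimension" is stable under this base change in both directions. The stability is supplied by \intref{Theorem}{th:support-sets} (which explicitly allows enlarging $K$ to an algebraically closed field of large transcendence degree) and \intref{Theorem}{th:detection}; assembling these correctly, rather than any new idea, is the crux.
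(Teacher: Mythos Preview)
Your overall strategy matches the paper's: the cycle is $(2),(3)\Rightarrow(1)$ trivially, $(1)\Rightarrow(4)$ via the Carlson module (\intref{Proposition}{pr:Lxi}), and $(4)\Rightarrow(2),(3)$ by passing to a common algebraically closed field, reducing to standard $\pi$-points via \intref{Proposition}{pr:reps}, and then invoking \intref{Theorem}{th:factorisation}.

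One genuine gap: your proposed direct argument for $(1)\Rightarrow(2)$ via filtered colimits does not work. You suggest that $\alpha^*(M_K)$ has finite flat dimension if and only if all its finitely generated pieces do, but the ``only if'' direction fails---for instance $k$ sits as a submodule of $A_K/(\tau)$, the latter has projective dimension one while the former has infinite projective dimension. What is true is that $\Tor_2^{A_K}(K,\alpha^*(M_K))=\colim_j \Tor_2^{A_K}(K,\alpha^*((M_j)_K))$, but vanishing of the colimit does not force vanishing of each term, so the biconditional in $(2)$ does not reduce to the finitely generated case this way. Fortunately you do not need this implication: as you yourself note, the Carlson-module argument already gives $(1)\Rightarrow(4)$ (the witness $L_\xi$ is finitely generated), and you prove $(4)\Rightarrow(2)$, so the cycle closes without $(1)\Rightarrow(2)$. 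Simply delete that paragraph.

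For the key step $(4)\Rightarrow(2),(3)$ the paper is slightly more direct than your support-set route. After reducing to standard $\pi$-points $\alpha_{\bsa},\alpha_{\bsb}$ over the same algebraically closed field, the paper reads off from \eqref{eq:alpha-prime} that $\fp(\alpha_{\bsa})=\fp(\alpha_{\bsb})$ forces $[a_1^p,\dots,a_n^p,a_{n+1}^{2p}]=[b_1^p,\dots,b_n^p,b_{n+1}^{2p}]$, hence the associated hypersurfaces coincide, so $\bar\alpha_{\bsa}=\bar\alpha_{\bsb}$ as maps, and \intref{Theorem}{th:factorisation} delivers the conclusion for \emph{every} $k\mcE$-module $M$---giving $(2)$ and $(3)$ at once. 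Your route through $\hysupp_{\mcE}$ lands in the same place via the same ingredients, just with an extra translation layer; and your separate treatment of $(3)$ via $M_K\cong M^K$ for finite extensions is unnecessary once you see that \intref{Theorem}{th:factorisation} is agnostic about which module it is fed.
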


\begin{proof}
It is clear that (2) and (3), even limited  to finitely generated
modules, imply (1). \intref{Proposition}{pr:Lxi} shows that (1)
implies (4).  In the rest of the proof we suppose that (4) holds, and
verify that (2) and (3) do. By extending fields if necessary, we may assume that $k$ is algebraically closed, and that $L=k=K$.  

We can also assume that $\alpha=\alpha_{\bsa}$ and $\beta=\alpha_{\bsb}$, as in \eqref{eq:alpha}, for some $\bsa,\bsb\in \bbA^{n+1}(k)$. 

Indeed, \intref{Proposition}{pr:reps} yields  that $\alpha$ and $\beta$ are equivalent as $\pi$-points to $\alpha_{\bsa}$ and $\alpha_{\bsb}$, respectively, for some $\bsa,\bsb$.  Moreover $\fp(\alpha) = \fp(\alpha_{\bsa})$ and $\fp(\beta)=\fp(\alpha_{\bsb})$, for we already know (1)$\Rightarrow$(4). This justifies the assumption.

It is  clear from \eqref{eq:alpha-prime} that $\fp(\alpha_{\bsa})=\fp(\alpha_{\bsb})$ implies
\[
(a_1^p,\dots,a_n^p,a_{n+1}^{2p})= (b_1^p,\dots,b_n^p,b_{n+1}^{2p})\,.
\]
Then, in the notation of \eqref{eq:factor-alpha} one gets that $\bar{\alpha}_{\bsa} = \bar{\alpha}_{\bsb}$, and  so \intref{Theorem}{th:factorisation} implies that (2) and (3) hold.
\end{proof}

\section{Support and cosupport}
\label{se:support-and-cosupport}
In this section we define the $\pi$-support and $\pi$-cosupport of a $k\mcE$-module $M$, and the cohomological support and cosupport,
and prove that these two notions of support, and of cosupport, coincide. Once this is done \cite{Benson/Iyengar/Krause/Pevtsova:2017a} provides a path to the classification of the localising subcategories of $\StMod k\mcE$.

\begin{definition}
Let $M$ be a $k\mcE$-module. The \emph{$\pi$-support} of $M$, denoted $\pisupp_\mcE(M)$, is the subset of  $\Proj H^*(\mcE,k)$ consisting of the primes $\fp(\alpha)$ where  $\alpha\colon\test_K\to K\mcE_K$ is a $\pi$-point such that  the flat dimension of $\alpha^*(M_K)$ is infinite. Replacing 
$\alpha^*(M_K)$ by $\alpha^*(M^K)$ gives the \emph{$\pi$-cosupport} of $M$, denoted $\picosupp_\mcE(M)$.
\end{definition}

\begin{remark}
For a finitely generated $k\mcE$-module $M$, we have 
\[
\pisupp_\mcE(M)=\picosupp_\mcE(M)\,.
\]
This is usually not true for infinitely generated modules; see \cite[Example~4.7]{Benson/Iyengar/Krause/Pevtsova:2017a}.
\end{remark}

The result below is now only a reformulation of \intref{Theorem}{th:detection}.

\begin{theorem}
\label{th:pi-proj}
 Let $M$ be a $k\mcE$-module. The following conditions are equivalent.
  \begin{enumerate}[\quad\rm(1)]
  \item $M$ is projective,
  \item $\pisupp_\mcE(M)=\varnothing$,
  \item $\picosupp_\mcE(M)=\varnothing$. \qed
  \end{enumerate}
\end{theorem}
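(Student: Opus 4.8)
The statement to prove is \intref{Theorem}{th:pi-proj}, which asserts the equivalence of projectivity with emptiness of $\pi$-support and of $\pi$-cosupport. As the sentence preceding it says, this is ``only a reformulation of \intref{Theorem}{th:detection}'', so the proof is short and amounts to unwinding the definitions.

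The plan is as follows. First I would recall that, by definition, $\pisupp_\mcE(M)$ is the set of primes $\fp(\alpha)$ over all $\pi$-points $\alpha\colon \test_K\to K\mcE_K$ for which $\alpha^*(M_K)$ has infinite flat dimension; hence $\pisupp_\mcE(M)=\varnothing$ if and only if $\alpha^*(M_K)$ has finite flat dimension for \emph{every} $\pi$-point $\alpha$. This is verbatim condition (2) of \intref{Theorem}{th:detection}. Dually, $\picosupp_\mcE(M)=\varnothing$ if and only if $\alpha^*(M^K)$ has finite flat dimension for every $\pi$-point $\alpha$, which is condition (3) of \intref{Theorem}{th:detection}. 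So the present (2) $\Leftrightarrow$ \intref{Theorem}{th:detection}(2) and the present (3) $\Leftrightarrow$ \intref{Theorem}{th:detection}(3), while the present (1) is literally \intref{Theorem}{th:detection}(1). Invoking that theorem then gives all three equivalences at once, and the proof is complete with a \qed.

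There is essentially no obstacle here: the only thing to be careful about is the bookkeeping with the empty-set quantifier, namely that ``no $\pi$-point witnesses infinite flat dimension'' is the contrapositive-flavoured restatement of ``every $\pi$-point gives finite flat dimension'', and similarly on the cosupport side with $M^K$ in place of $M_K$. Since \intref{Theorem}{th:pi-proj} is already flagged in the text as a reformulation, I would keep the proof to a single short paragraph, as below.

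\begin{proof}
By definition $\pisupp_\mcE(M)$ consists of the primes $\fp(\alpha)$ ranging over $\pi$-points $\alpha\colon\test_K\to K\mcE_K$ for which $\alpha^*(M_K)$ has infinite flat dimension. Therefore $\pisupp_\mcE(M)=\varnothing$ precisely when $\alpha^*(M_K)$ has finite flat dimension for every $\pi$-point $\alpha$, which is condition~(2) of \intref{Theorem}{th:detection}. Likewise $\picosupp_\mcE(M)=\varnothing$ precisely when $\alpha^*(M^K)$ has finite flat dimension for every $\pi$-point $\alpha$, which is condition~(3) of \intref{Theorem}{th:detection}. Since condition~(1) above coincides with condition~(1) of \intref{Theorem}{th:detection}, the asserted equivalences follow at once from that theorem.
\end{proof}
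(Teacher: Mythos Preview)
Your proposal is correct and matches the paper's approach exactly: the paper gives no separate proof at all, merely stating that the result ``is now only a reformulation of \intref{Theorem}{th:detection}'' and appending a \qed. Your paragraph simply spells out that reformulation by unwinding the definitions of $\pisupp$ and $\picosupp$, which is precisely what is intended.
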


The following proposition exhibits the effect of changing the Hopf
structure, in order to prove formulas for the support of tensor product
and Hom of two representations.

\begin{proposition}
\label{pr:superFP}
Let $M$ be a $k$-vector space and let $\beta, \gamma,\delta,\ve \colon M\to M$ be commuting $k$-linear maps such that 
\[
\gamma^{p}=0= \delta^{p^m}   \quad \text{and}\quad \delta^p = \ve^2
\]
for some integer $m\ge 1$. Then the map 
\[
 \begin{pmatrix} 
 \ve & \delta \\ 
 -\delta^{p-1} &    -\ve 
\end{pmatrix} \colon M\oplus M \lra M \oplus M 
 \]
is square zero. It has maximal image if and only if the map
\[ 
\begin{pmatrix} 
\ve & \delta+ \beta \gamma  \\
   -(\delta+ \beta \gamma)^{p-1} &    -\ve 
\end{pmatrix}
 \col M\oplus M \lra M \oplus M
 \]
 does.
\end{proposition}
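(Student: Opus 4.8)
The plan is to reduce the statement to a comparison of two $\pi$-points of a small elementary supergroup scheme, and then to exploit the special form of the data. First one checks that both maps are square zero: writing $c\colonequals\beta\gamma$, commutativity together with $\gamma^p=0$ and the vanishing of $\binom{p}{i}$ for $0<i<p$ give $(\delta+c)^p=\delta^p+c^p=\delta^p+\beta^p\gamma^p=\delta^p=\varepsilon^2$, so the square of either matrix is $(\varepsilon^2-(\delta+c)^p)$, respectively $(\varepsilon^2-\delta^p)$, times the identity, hence zero. Next, view $M$ as a module over $k\mcE$ for the elementary supergroup scheme $\mcE$ with $n=2$, that is, over $k[s_1,s_2,\sigma]/(s_1^p,s_2^{p^m},s_2^p-\sigma^2)$, via $s_1\mapsto c$, $s_2\mapsto\delta$, $\sigma\mapsto\varepsilon$; the displayed relations hold by hypothesis. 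Since $\delta$ and $\delta+c$ are nilpotent, \intref{Theorem}{th:rank} (in its evident ungraded form, proved the same way) identifies ``the first matrix has maximal image'' with ``$\alpha_{(0,0,1)}^*(M)$ has finite flat dimension'', and ``the second matrix has maximal image'' with ``$\alpha_{(1,0,1)}^*(M)$ has finite flat dimension'', where $\alpha_{(0,0,1)}$ and $\alpha_{(1,0,1)}$ are the $\pi$-points of \eqref{eq:alpha}; note that $\alpha_{(0,0,1)}(t)=s_2$, $\alpha_{(1,0,1)}(t)=s_1+s_2$, and both send $\tau$ to $\sigma$. So it suffices to prove that these two restrictions of $M$ are simultaneously of finite flat dimension.

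The reason this holds for our $M$, and not for a general $k\mcE$-module, is the extra structure: $\gamma$ commutes with $s_1,s_2,\sigma$, so $\gamma\in\End_{k\mcE}(M)$; it satisfies $\gamma^p=0$; and $s_1=\beta\gamma$ annihilates every subquotient $\gamma^iM/\gamma^{i+1}M$. Filtering $M$ by the $k\mcE$-submodules $\gamma^iM$ — a filtration of length at most $p$ — each associated graded piece is a module over $k\mcE/(s_1)\cong\test/(t^{p^m})$, and on such a module $\alpha_{(0,0,1)}$ and $\alpha_{(1,0,1)}$ restrict identically, because $\alpha_{(1,0,1)}(t)-\alpha_{(0,0,1)}(t)=s_1$ acts as zero. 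The same submodules are $\test$-submodules of both $\alpha_{(0,0,1)}^*(M)$ and $\alpha_{(1,0,1)}^*(M)$, with identical associated graded; since finite flat dimension over $\test$ is stable under extensions, one gets \emph{at once} that if all the graded pieces have finite flat dimension then both restrictions do.

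To obtain the full equivalence one interpolates: let $z$ be an indeterminate and give $M\otimes_kk[z]$ the $\test_{k[z]}$-module structure with $t$ acting as $\delta+z\beta\gamma$ and $\tau$ as $\varepsilon$ (well defined by the same characteristic-$p$ computation). This family is free over $k[z]$, is $(t,\tau)$-torsion, has associated graded (with respect to $\gamma$) independent of $z$, and reduces modulo $z$, respectively $z-1$, to $\alpha_{(0,0,1)}^*(M)$, respectively $\alpha_{(1,0,1)}^*(M)$. As $z$ and $z-1$ are non-zerodivisors on $\test_{k[z]}$ and on the family, base change identifies the relevant $\Tor$ (equivalently $\Ext$) modules of the two fibres with those of the family; one then argues, using the semicontinuity of these in a flat family in the spirit of \intref{Section}{se:appendix}, that the locus of points of the $z$-line at which the family fails to have finite flat dimension is empty or everything. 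The main obstacle is precisely this last point: that locus is a priori only Zariski closed, so could be a nonempty finite set, and ruling this out is where the real work lies — the filtration above controls the ``everything'' alternative, and the family/semicontinuity argument excludes a nonempty finite locus; the infinite-dimensional case needs the base-change description rather than the naive minors argument. Combining these with \intref{Theorem}{th:rank} then yields the proposition.
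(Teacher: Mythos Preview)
Your reduction to $\pi$-points is sound, but the argument you offer to close the gap is incomplete, and the gap is real. Once you set $s_1=\beta\gamma$ and work over $k\mcE$ with $n=2$, the two $\pi$-points $\alpha_{(0,0,1)}$ and $\alpha_{(1,0,1)}$ are \emph{inequivalent}: the associated hypersurface polynomials $s_2^p-\sigma^2$ and $(s_1+s_2)^p-\sigma^2$ differ by $s_1^p$, which is a minimal generator of $I$ and hence not in $\fm I$. So the comparison you want genuinely fails for arbitrary $k\mcE$-modules, and you must exploit the extra $\gamma$. Your $\gamma$-filtration gives only one direction (if every subquotient has finite flat dimension then both restrictions do), and your deformation sketch does not deliver the converse: for infinite-dimensional $M$ there is no semicontinuity of the rank locus to invoke, and even the $k[z]$-family argument you outline does not produce a statement of the form ``the bad locus is empty or all of $\mathbb{A}^1$'' without an additional input you have not supplied. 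Your own parenthetical (``ruling this out is where the real work lies'') is an accurate self-assessment.

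The paper's proof avoids this entirely by \emph{not} collapsing $\beta\gamma$ to a single variable. It makes $M$ a module over
\[
R=\pos{b,c,d,e}/(c^p,\,d^{p^m},\,d^p-e^2)
\]
via $b\mapsto\beta$, $c\mapsto\gamma$, $d\mapsto\delta$, $e\mapsto\ve$, and considers the two maps $\alpha_1,\alpha_2\colon\test\to R$ with $\alpha_i(\tau)=e$ and $\alpha_1(t)=d$, $\alpha_2(t)=d+bc$. Now the hypersurface polynomials are $d^p-e^2$ and $(d+bc)^p-e^2$, which differ by $b^pc^p$; since $c^p\in I$ and $b^p\in\fm$, this difference \emph{does} lie in $\fm I$. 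Hence $\bar\alpha_1=\bar\alpha_2$ up to $\fm I$, and \intref{Theorem}{th:factorisation} together with \cite[Theorem~2.1(2)]{Avramov/Iyengar:2018a} give $\fdim\alpha_1^*(M)<\infty\Leftrightarrow\fdim\alpha_2^*(M)<\infty$ directly, for arbitrary $M$. The point is that keeping $b$ and $c$ separate turns the obstruction $s_1^p$ into $b^pc^p$, pushing it from $I\setminus\fm I$ into $\fm I$; your choice of ring throws this away.
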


\begin{proof}
We managed, not without some difficulty, to adapt the proof of~\cite[2.2]{Friedlander/Pevtsova:2005a} to this context. Here is an alternative approach, suggested by \cite[\S5.4]{Avramov/Iyengar:2018a} that makes it clear that what we seek is a variation on \intref{Proposition}{pr:reps}. Akin to the proof of \cite[5.6]{Avramov/Iyengar:2018a} we consider the $k$-algebra
\[
R \colonequals \frac{\pos{b,c,d,e}}{(c^{p}, d^{p^m}, d^p-e^2)}
\]
and maps of $k$-algebras
\[
\begin{tikzcd}
\displaystyle{\frac{k[t,\tau]}{(t^p-\tau^2)}} \arrow[r,yshift=0.7ex,"{\alpha_1}"] \arrow[r,yshift=-0.7ex,swap,"{\alpha_2}"] 
	& R  
\end{tikzcd} \quad\text{where} \quad
\begin{aligned}
&\alpha_1(t) = d & \text{ and } \alpha_1(\tau) = e \\
&\alpha_2(t) = d + bc & \text{ and } \alpha_2(\tau) = e. 
\end{aligned}
\]
Given \intref{Theorem}{th:rank}, the desired result is then that $\alpha_1^*(M)$ has finite flat dimension if and only if $\alpha_2^*(M)$ does. In the notation of \intref{Theorem}{th:factorisation} the pertinent hypersurfaces in $\pos{b,c,d,e}$ are defined by polynomials 
\[
d^p -e^2 \quad\text{and} \quad (d+bc)^p - e^2\,.
\]
Evidently these polynomials are congruent modulo $(b,c,d,e)(c^p,d^{p^m},d^p-e^2)$. This justifies the second equivalence below:
\begin{align*}
\alpha_1^*(M) \text{ has finite flat dimension } 
	& \iff \bar{\alpha}_1^*(M)  \text{ has finite flat dimension } \\
       & \iff \bar{\alpha}_2^*(M)  \text{ has finite flat dimension } \\
       &\iff \alpha_2^*(M) \text{ has finite flat dimension } 
\end{align*}
The first and the last one are by \intref{Theorem}{th:factorisation}.
\end{proof}

The result below is the analogue of~\cite[Lemma~3.9]{Friedlander/Pevtsova:2005a} and ~\cite[Lemma~4.3]{Benson/Iyengar/Krause/Pevtsova:2017a}.

\begin{lemma}
\label{le:tensor-hom-fpd}
Let $\alpha\colon \test\to K\mcE_K$ be a $\pi$-point and let $M,N$ be $K\mcE_K$-modules. The following conditions are equivalent:
\begin{enumerate}[\quad\rm(1)]
\item $\alpha^*(M\otimes_K N)$ has finite flat dimension.
\item $\alpha^*(\Hom_K(M,N))$ has finite flat dimension.
\item $\alpha^*(M)$ or $\alpha^*(N)$ has finite flat  dimension.    
\end{enumerate}
\end{lemma}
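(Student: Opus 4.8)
The plan is to establish (1)$\Leftrightarrow$(3) in detail and to obtain (2)$\Leftrightarrow$(3) by the parallel argument with $\Hom_K$ in place of $\otimes_K$. First I would reduce to a standard $\pi$-point. Since finite flat dimension is unaffected by faithfully flat base change, we may enlarge $K$ and assume it is closed under $p$th and square roots; then \intref{Proposition}{pr:reps}---whose proof, resting on \intref{Theorem}{th:factorisation} and \cite[Theorem~2.1(2)]{Avramov/Iyengar:2018a}, applies to arbitrary $K\mcE_K$-modules and not only to $k\mcE$-modules---lets us replace $\alpha$ by $\alpha_{\bsa}$ for a suitable $\bsa\in\bbA^{n+1}(K)\setminus\{0\}$, simultaneously for the modules $M\otimes_K N$, $M$, $N$, and $\Hom_K(M,N)$. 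So from now on $\alpha=\alpha_{\bsa}$.

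Next I would compute the action of $\test_K$ on $\alpha^{*}(M\otimes_K N)$: here $t$ and $\tau$ act through $\Delta(\alpha(t))$ and $\Delta(\alpha(\tau))$, where $\Delta$ is the comultiplication of $K\mcE_K$. Since $\sigma$ is primitive and $\alpha(\tau)=a_{n+1}^{p}\sigma$, one has $\ve\colonequals\Delta(\alpha(\tau))=\alpha(\tau)\otimes 1+1\otimes\alpha(\tau)$, and hence $\ve^{2}=\alpha(t)^{p}\otimes 1+1\otimes\alpha(t)^{p}$. Setting $\delta_{0}\colonequals\alpha(t)\otimes 1+1\otimes\alpha(t)$ one gets $\delta_{0}^{p}=\ve^{2}$ and $\delta_{0}^{p^{m}}=0$, as $\alpha(t)^{p^{m}}=0$ in $K\mcE_K$. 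The crucial structural input, read off from the comultiplication on $E^{-}_{m,n}$ recorded in \cite[\S8]{Benson/Iyengar/Krause/Pevtsova:bikp5}, is that the non-primitive correction
\[
D\colonequals\Delta(\alpha(t))-\delta_{0}
\]
can be written as a sum $\sum_{\ell}\beta_{\ell}\gamma_{\ell}$ of products of commuting operators on $M\otimes_K N$ with each $\gamma_{\ell}$ of the form $1\otimes w_{\ell}$, where $w_{\ell}$ is a positive-degree monomial in $s_{1},\dots,s_{n-1}$, so that $\gamma_{\ell}^{p}=0$. This uses that $s_{1}$ and $\sigma$ are primitive, that $\Delta(s_{i})-s_{i}\otimes 1-1\otimes s_{i}$ involves only $s_{1},\dots,s_{i-1}$ with every monomial touching both tensor factors, and---crucially---that the correction to $\Delta(s_{n}^{p^{m-1}})$ vanishes, because raising it to the power $p^{m-1}$ (Frobenius) kills all such monomials. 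I would then apply \intref{Proposition}{pr:superFP} repeatedly, stripping off the summands $\beta_{\ell}\gamma_{\ell}$ one at a time (the hypotheses $\gamma^{p}=0$, $\delta^{p^{m}}=0$, $\delta^{p}=\ve^{2}$ being preserved at each step), and combine with \intref{Theorem}{th:rank}: $\alpha^{*}(M\otimes_K N)$ has finite flat dimension if and only if the square-zero operator $\left(\begin{smallmatrix}\ve&\delta_{0}\\-\delta_{0}^{p-1}&-\ve\end{smallmatrix}\right)$ on $(M\otimes_K N)^{\oplus 2}$ has maximal image, i.e. if and only if $M\otimes_K N$, regarded as a $\test_K$-module with $t,\tau$ acting by $\delta_{0},\ve$, has finite flat dimension. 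But that $\test_K$-module is precisely $\alpha^{*}(M)\otimes_K\alpha^{*}(N)$ equipped with the tensor structure coming from the cocommutative Hopf algebra $\test_K$, in which $t$ and $\tau$ are primitive.

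It then remains to prove the purely $\test$-theoretic statement: for $\test_K$-modules $P,Q$ on which $t$ is nilpotent, $P\otimes_K Q$ has finite flat dimension if and only if $P$ or $Q$ does. Here I would pass through \intref{Proposition}{pr:detect}, which supplies a tensor-triangulated equivalence $\uGProj(\test_K)\simeq\StMod(K[t]/(t^{p}))$, noting that the approximation functor is a tensor functor sending modules of finite flat dimension to $0$. This reduces the claim to the classical fact that over $K[t]/(t^{p})$, the group algebra of $\bbZ/p$, a tensor product $X\otimes_K Y$ is stably trivial exactly when $X$ or $Y$ is. This holds because $\{Y:X\otimes_K Y\simeq 0\}$ is a localizing tensor-ideal, hence, once it contains any nonzero object, all of $\StMod(K[t]/(t^{p}))$ (whose localizing subcategories are classified, e.g.\ via \cite{Benson/Carlson/Rickard:1996a}), so it contains the unit $K$ and forces $X\simeq 0$. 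This yields (1)$\Leftrightarrow$(3). For (2)$\Leftrightarrow$(3) I would rerun the same argument with $\Hom_K(M,N)$: the $\test_K$-action is computed through the antipode ($S(\sigma)=-\sigma$, $S(s_{i})=-s_{i}+(\text{decomposable in }s_{1},\dots,s_{i-1})$), \intref{Proposition}{pr:superFP} again removes the decomposable corrections, and one is reduced to showing that over $K[t]/(t^{p})$ the module $\Hom_K(X,Y)$ has finite flat dimension if and only if $X$ or $Y$ does, which I would obtain following \cite[Lemma~4.3]{Benson/Iyengar/Krause/Pevtsova:2017a}.

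I expect the main obstacle to be the middle step---pinning down $\Delta(\alpha(t))$ precisely enough to write its non-primitive part as $\sum_{\ell}\beta_{\ell}\gamma_{\ell}$ with $\gamma_{\ell}^{p}=0$, the exact shape \intref{Proposition}{pr:superFP} requires. This is precisely what forces us to work with the standard $\pi$-points $\alpha_{\bsa}$: for a general $\pi$-point the correction could contain a summand such as $s_{n}\otimes s_{n}$, which is only $p^{m}$-nilpotent, so that \intref{Proposition}{pr:superFP} would not apply. A secondary, more routine difficulty is the usual care with infinitely generated modules in the $\Hom$ half of the statement.
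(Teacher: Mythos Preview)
Your proposal is correct and follows the same overall strategy as the paper: reduce to a standard $\pi$-point $\alpha_{\bsa}$ via \intref{Proposition}{pr:reps}, invoke \intref{Proposition}{pr:superFP} together with \intref{Theorem}{th:rank} to pass between the given tensor/Hom module and the one on which $t,\tau$ act primitively, and finish via \intref{Proposition}{pr:detect} and the known fact for $K[t]/(t^p)$.

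The one difference is in how the middle step is packaged. You keep the original Hopf structure on $K\mcE_K$ and compute $\Delta(\alpha(t))$ using the explicit coproduct formulas from \cite[\S8]{Benson/Iyengar/Krause/Pevtsova:bikp5}, then peel off the non-primitive correction terms one at a time with \intref{Proposition}{pr:superFP}. The paper instead \emph{changes} the Hopf structure on $K\mcE_K$, declaring all the $s_i$ and $\sigma$ primitive; this makes $\alpha_{\bsa}$ a Hopf algebra map, so the restriction is a tensor functor and the result is immediate for the \emph{new} tensor product. One is then left comparing the old and new actions of $\alpha(t)$ on $M\otimes_K N$, which differ by an element of $I\otimes I$, and this is where \intref{Proposition}{pr:superFP} enters. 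The paper's framing has the advantage of not invoking the specific coproduct---consistent with its stated policy of using only the algebra structure of $k\mcE$ and the mere existence of a comultiplication---though it is just as brief as you are about why the correction terms satisfy the $p$-nilpotence hypothesis of \intref{Proposition}{pr:superFP}, the very point you flag as the main obstacle.
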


\begin{proof}
If $\alpha$ were a homomorphism of Hopf algebras, then the induced restriction functor $\alpha^*$ from \eqref{eq:alpha*} is a tensor functor, so combining it with  \intref{Proposition}{pr:detect} one gets tensor functors
\[ 
\StMod(K\mcE_K)\xrightarrow{\alpha^*} \uGProj(\test) \simeq  \StMod(K[t]/(t^p)). 
\]
In $\StMod(K[t]/(t^p))$, the tensor product or Hom of two modules is projective (i.e., zero) if and only if one of them is projective, which proves the lemma in this case.

The general case is tackled as in the proof of \cite[3.9]{Friedlander/Pevtsova:2005a}: By \intref{Proposition}{pr:reps}, we may assume that $\alpha$ has the
form~\eqref{eq:alpha}. We may therefore change the Hopf algebra structure on $K\mcE_K$ so that $\alpha$ is a homomorphism of Hopf algebras,
by making all the $s_i$ and $\sigma$ primitive:
\[
\Delta(s_i)=s_i\otimes 1 + 1 \otimes s_i \quad\text{and}\quad
\Delta(\sigma)=\sigma\otimes 1 + 1 \otimes \sigma\,.
\]
For any element $x$ of the augmentation ideal $I$ of $K\mcE_K$ this changes $\Delta(x)$ by an element of $I \otimes I$.

Now consider the action on $(M \otimes_K N) \oplus (M \otimes_K N)$ of the matrix 
\[
\begin{pmatrix} \tau & t \\
-t^{p-1} & -\tau
\end{pmatrix}
\]
via $\alpha$. The effect of changing from the old to the new diagonal
on this action  is a sequence of changes where $t\otimes 1 + 1 \otimes
t$ is replaced by $t\otimes 1 + 1 \otimes t + u \otimes v$ with
$u^p=0=v^p$. Applying \intref{Proposition}{pr:superFP}, we see that
this does not affect the maximal image property. So by \intref{Theorem}{th:rank}, it does not change whether $M \otimes_K N$ has finite projective
dimension. The argument for $\Hom_K(M,N)$ is similar.
\end{proof}

\begin{theorem}
\label{th:pi-tensor-hom}
Let $M$ and $N$ be $k\mcE$-modules. Then the following holds.
\begin{enumerate}[\quad\rm(1)]
\item
$\pisupp_\mcE(M\otimes_k N) =  \pisupp_\mcE(M)\cap \pisupp_\mcE(N)$.
\item
$\picosupp_\mcE(\Hom_k(M,N)) =
\pisupp_\mcE(M)\cap \picosupp_\mcE(N)$.
\end{enumerate}
\end{theorem}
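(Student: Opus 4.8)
The plan is to reduce both identities, one $\pi$-point at a time, to \intref{Lemma}{le:tensor-hom-fpd}, and then to promote the resulting pointwise statements to statements about $\pi$-(co)support by using \intref{Theorem}{th:equiv} to move between equivalent $\pi$-points. I would begin by recording the base-change compatibilities: for any extension field $K$ of $k$ there are natural isomorphisms of $K\mcE_K$-modules $(M\otimes_k N)_K\cong M_K\otimes_K N_K$ and $(\Hom_k(M,N))^K\cong\Hom_K(M_K,N^K)$, the second by Hom-tensor adjunction along $k\to K$ (restriction of scalars has left adjoint $K\otimes_k-$ and right adjoint $\Hom_k(K,-)$, and one checks the identifications respect the $K\mcE_K$-action); these are routine.

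For the inclusion ``$\subseteq$'' in (1): if $\fp\in\pisupp_\mcE(M\otimes_k N)$, choose a $\pi$-point $\alpha\colon\test_K\to K\mcE_K$ with $\fp(\alpha)=\fp$ such that $\alpha^*((M\otimes_k N)_K)$ has infinite flat dimension. By the first base-change isomorphism this module is $\alpha^*(M_K\otimes_K N_K)$, so the equivalence (1)$\Leftrightarrow$(3) of \intref{Lemma}{le:tensor-hom-fpd}, applied to the $K\mcE_K$-modules $M_K$ and $N_K$, forces \emph{both} $\alpha^*(M_K)$ and $\alpha^*(N_K)$ to have infinite flat dimension; hence $\fp\in\pisupp_\mcE(M)\cap\pisupp_\mcE(N)$. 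Inclusion ``$\subseteq$'' in (2) is identical, using the second base-change isomorphism and the equivalence (2)$\Leftrightarrow$(3) of \intref{Lemma}{le:tensor-hom-fpd} for $M_K$ and $N^K$.

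For the reverse inclusions one must produce a \emph{single} $\pi$-point that detects both modules at once, and this is exactly what \intref{Theorem}{th:equiv} supplies. Suppose $\fp\in\pisupp_\mcE(M)\cap\pisupp_\mcE(N)$; choose $\alpha\colon\test_K\to K\mcE_K$ with $\fp(\alpha)=\fp$ and $\alpha^*(M_K)$ of infinite flat dimension, and $\beta\colon\test_L\to L\mcE_L$ with $\fp(\beta)=\fp$ and $\beta^*(N_L)$ of infinite flat dimension. Since $\fp(\alpha)=\fp(\beta)$, condition (4) of \intref{Theorem}{th:equiv} holds, so by condition (2) the $\test_K$-module $\alpha^*(N_K)$ also has infinite flat dimension. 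Now both $\alpha^*(M_K)$ and $\alpha^*(N_K)$ have infinite flat dimension, and \intref{Lemma}{le:tensor-hom-fpd} (the contrapositive of (1)$\Leftrightarrow$(3)) gives that $\alpha^*(M_K\otimes_K N_K)\cong\alpha^*((M\otimes_k N)_K)$ has infinite flat dimension, i.e.\ $\fp\in\pisupp_\mcE(M\otimes_k N)$. For (2), given $\fp\in\pisupp_\mcE(M)\cap\picosupp_\mcE(N)$, the same recipe applies, now transporting infinite flat dimension of $\beta^*(N^L)$ to $\alpha^*(N^K)$ via condition (3) of \intref{Theorem}{th:equiv}, and then invoking (2)$\Leftrightarrow$(3) of \intref{Lemma}{le:tensor-hom-fpd} for $M_K$ and $N^K$.

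The one genuinely delicate point is this passage from pointwise data to $\pi$-(co)support, namely arranging that a single $\pi$-point representing $\fp$ serves $M$ and $N$ simultaneously; \intref{Theorem}{th:equiv} was designed for precisely this, so once it is in hand the remainder is bookkeeping with the two base-change isomorphisms and \intref{Lemma}{le:tensor-hom-fpd}, together with the observation that throughout ``infinite flat dimension'' is simply the negation of ``finite flat dimension''.
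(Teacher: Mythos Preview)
Your proof is correct and follows the same approach as the paper: reduce to \intref{Lemma}{le:tensor-hom-fpd} via the two base-change isomorphisms $(M\otimes_k N)_K\cong M_K\otimes_K N_K$ and $\Hom_k(M,N)^K\cong\Hom_K(M_K,N^K)$. The paper's proof is terse and does not explicitly invoke \intref{Theorem}{th:equiv}; you are simply making explicit what the paper leaves implicit, namely that once \intref{Theorem}{th:equiv} is in hand, membership of $\fp$ in $\pisupp_\mcE(M)$ is witnessed by \emph{any} $\pi$-point over $\fp$, so a single representative serves all modules at once and the reverse inclusions follow immediately from the lemma.
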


\begin{proof}
  For (i), we use the isomorphism
  \[ 
  M_K \otimes_K N_K \cong (M \otimes_k N)_K. 
\]
  For (ii), we use the isomorphism
  \begin{equation*}
    \Hom_k(M,N)^K \cong \Hom_K(M_K,N^K).
  \end{equation*}
  In both cases, we then use \intref{Lemma}{le:tensor-hom-fpd}.
\end{proof}

Next we introduce cohomological support and cosupport, following \cite{Benson/Iyengar/Krause:2009a,Benson/Iyengar/Krause:2012b}. Recall that the stable category $\StMod(k\mcE)$ has two gradings: an internal one with shift $\Pi$ and a cohomological one with shift $\Omega^{-1}$, and a natural isomorphism $\Pi\Omega\cong\Omega\Pi$. Its
centre $Z\StMod(k\mcE)$ is doubly graded, and $Z^{i,j}\StMod(k\mcE)$ consists of those natural transformations $\gamma$ from the identity to $\Omega^{-i}\Pi^{j}$ which satisfy $\gamma\Omega=-\Omega\gamma$ and $\gamma\Pi=-\Pi\gamma$. This is a doubly graded commutative ring, in the sense that if $x\in Z^{i,j}$ and $y\in Z^{i',j'}$ then
\[ 
yx = (-1)^{(i+j)(i'+j')}xy. 
\]
The category $\StMod(k\mcE)$ is $H^{*,*}(\mcE,k)$-linear, in the sense of \cite{Benson/Iyengar/Krause:2009a}. Namely, there is a homomorphism of doubly graded $k$-algebras $H^{*,*}(\mcE,k) \to Z\StMod(k\mcE)$, given in the obvious way. For each $\fp\in\Proj H^{*,*}(\mcE,k)$ there is a local cohomology functor 
\[
\Gamma_\fp\colon\StMod(k\mcE)\to\StMod(k\mcE)
\]
and a local homology functor 
\[
\Lambda_\fp\colon\StMod(k\mcE)\to\StMod(k\mcE)
\]
defined in terms of this action; see \cite{Benson/Iyengar/Krause:2009a,Benson/Iyengar/Krause:2012b}.

\begin{definition}
The \emph{support} and \emph{cosupport} of a $k\mcE$-module $M$ are the subsets
  \begin{align*}
    \supp_\mcE(M)&\colonequals \{\fp\in\Proj H^{*,*}(\mcE,k) \mid \Gamma_\fp(M)\ne  0 \} \\
    \cosupp_\mcE(M)& \colonequals  \{\fp\in\Proj H^{*,*}(\mcE,k) \mid \Lambda_\fp(M)  \ne 0 \}. 
  \end{align*}
See \cite[\S5]{Benson/Iyengar/Krause:2009a} and  \cite[\S4]{Benson/Iyengar/Krause:2012b}. 
\end{definition}

\begin{proposition}
\label{pr:pi-Gamma-p}
We have $\pisupp(\Gamma_\fp(k))=\{\fp\}$.
\end{proposition}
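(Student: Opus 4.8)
The plan is to identify $\pisupp_\mcE(\Gamma_\fp(k))$ by combining two pieces of information: first, that $\pi$-support is invariant under the local cohomology functor in the expected way, and second, a direct computation that pins down the support of $k$ itself after localisation at $\fp$. First I would recall the key facts about $\Gamma_\fp$: it is a colocalisation functor built from the $H^{*,*}(\mcE,k)$-action on $\StMod(k\mcE)$, and $\Gamma_\fp(k)$ is supported, in the cohomological sense, precisely at $\{\fp\}$; this is standard from \cite{Benson/Iyengar/Krause:2009a}. So the real content is that $\pisupp_\mcE$ agrees with cohomological support at least on the objects $\Gamma_\fp(k)$, or more robustly, that $\pi$-support detects $\Gamma_\fp$-acyclicity the way cohomological support does.

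The approach I would take is to use the detection theorem (\intref{Theorem}{th:detection}, equivalently \intref{Theorem}{th:pi-proj}) together with the already-established compatibility of $\pi$-support with tensor products (\intref{Theorem}{th:pi-tensor-hom}(1)) and with the Carlson modules $L_\xi$ from \intref{Proposition}{pr:Lxi}. Concretely: for a $\pi$-point $\alpha$ with $\fp(\alpha)=\fq$, I want to show $\alpha^*(\Gamma_\fp(k)_K)$ has infinite flat dimension if and only if $\fq = \fp$. One direction: since $\Gamma_\fp(k)$ has cohomological support $\{\fp\}$, it lies in the localising subcategory generated by $\Gamma_\fp(k)$, and conversely $\Gamma_\fp(k)$ can be built from $k$ by a combination of the functors $\Gamma_{\mathcal V}$ and $\Lambda_{\mathcal W}$ for specialisation-closed sets, which on the level of modules amounts to taking (co)localisations that in turn can be expressed via the modules $L_\xi$ and their colimits. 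Tracking $\pi$-support through these operations, using \intref{Proposition}{pr:Lxi} to compute $\pisupp_\mcE(L_\xi) = \Proj H^{*,*}(\mcE,k) \setminus \{\fq : \xi \notin \fq\}$ — i.e. the zero locus $V(\xi)$ — and using the tensor formula to intersect these loci, one whittles the support of $\Gamma_\fp(k)$ down to exactly $\{\fp\}$.

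More precisely, I would argue as follows. For $\fp$ a point of $\Proj H^{*,*}(\mcE,k)$, write $\mcV(\fp)$ for $\{\fq : \fq \subseteq \fp\}$ wait — rather, recall $\Gamma_\fp = \Gamma_{\mcV(\fp)} \Lambda_{\mcV(\fp)}$-style construction where $\mcV(\fp) = \{\fq : \fq \not\supseteq \fp \text{ closure issues}\}$; I would instead invoke the standard identity $\supp_\mcE(\Gamma_\fp M) \subseteq \{\fp\}$ and $\supp_\mcE(\Gamma_\fp k) = \{\fp\} \cap \supp_\mcE(k) = \{\fp\}$ since $\supp_\mcE(k)$ is everything. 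Then the inclusion $\pisupp_\mcE(\Gamma_\fp k) \subseteq \{\fp\}$ should follow once we know $\pisupp_\mcE(N) \subseteq \supp_\mcE(N)$ for all $N$, which is itself a consequence of: if $\fq \notin \supp_\mcE(N)$ then $\Gamma_\fq N = 0$, so picking $\xi \in \fq$ appropriately and using that $L_\xi \otimes_k N$-type constructions have controlled $\pi$-support via \intref{Proposition}{pr:Lxi} and \intref{Theorem}{th:pi-tensor-hom}, one forces $\alpha^*(N_K)$ to have finite flat dimension whenever $\fp(\alpha) = \fq$. For the reverse inclusion $\fp \in \pisupp_\mcE(\Gamma_\fp k)$: pick a $\pi$-point $\alpha$ with $\fp(\alpha) = \fp$ and suppose for contradiction $\alpha^*(\Gamma_\fp(k)_K)$ has finite flat dimension; using that $k$ is built from $\Gamma_\fp(k)$-type pieces, or more directly that $\Gamma_\fp(k) \ne 0$ in $\StMod$ and applying the fact that $\alpha^*$ detects enough (one would here want a statement that for each $\fp$ there is *some* $\pi$-point witnessing it), derive that $\Gamma_\fp(k)$ is projective, contradicting $\supp_\mcE(\Gamma_\fp k) = \{\fp\} \ne \varnothing$. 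The cleanest route is: $\Gamma_\fp(k) \ne 0$, so by \intref{Theorem}{th:pi-proj} its $\pi$-support is non-empty, and it is contained in $\{\fp\}$ by the previous paragraph, hence equals $\{\fp\}$.

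The main obstacle I anticipate is establishing the inclusion $\pisupp_\mcE(N) \subseteq \supp_\mcE(N)$ cleanly, i.e. showing that cohomological triviality at $\fq$ forces $\pi$-triviality at any $\pi$-point lying over $\fq$. This requires knowing that the functors $\Gamma_\fq$ are compatible with restriction along $\pi$-points — essentially that $\alpha^*(\Gamma_\fq^{k\mcE} M)$ relates to $\Gamma_{\fp(\alpha)}^{\test}(\alpha^* M)$ — which in turn rests on the Carlson-module machinery: $\Gamma_\fq M$ can be approximated using modules of the form $L_\xi \otimes_k M$ (for $\xi$ running over a cofinal system defining the relevant specialisation-closed sets), and \intref{Proposition}{pr:Lxi} together with \intref{Theorem}{th:pi-tensor-hom} lets one compute the $\pi$-support of each such piece. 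Assembling these into a statement about the homotopy colimit/limit defining $\Gamma_\fq$ and $\Lambda_\fq$ — and checking $\pi$-support behaves well under these — is the technical heart; everything else is bookkeeping against results already in the excerpt. If the paper has an ambient lemma asserting $\pisupp_\mcE$ commutes with $\Gamma_\fp$ and $\Lambda_\fp$ (which one would expect immediately after this proposition), the proof collapses to one line: $\pisupp_\mcE(\Gamma_\fp k) = \{\fp\} \cap \pisupp_\mcE(k) = \{\fp\}$, using that $\pisupp_\mcE(k)$ is all of $\Proj H^{*,*}(\mcE,k)$ because $k$ is not projective and every point is hit by some $\pi$-point.
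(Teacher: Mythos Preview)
Your overall architecture is right—show $\pisupp_\mcE(\Gamma_\fp k)\subseteq\{\fp\}$ and then invoke \intref{Theorem}{th:pi-proj} on the nonzero object $\Gamma_\fp(k)$ to get equality—and you correctly identify the tools (Carlson modules via \intref{Proposition}{pr:Lxi}, the tensor formula \intref{Theorem}{th:pi-tensor-hom}, detection). The endgame is exactly the paper's.

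The gap is in the inclusion. You propose to deduce it from $\pisupp_\mcE(N)\subseteq\supp_\mcE(N)$ for \emph{all} $N$, but that is (half of) \intref{Theorem}{th:pi-equals-H}, whose proof invokes this very proposition; so that route is circular. Your fallback—``$\alpha^*(\Gamma_\fq^{k\mcE}M)$ relates to $\Gamma_{\fp(\alpha)}^{\test}(\alpha^*M)$''—is the right intuition, but it only works once $\alpha^*$ is a tensor functor, i.e.\ once $\alpha$ is a Hopf map, and you never arrange that.

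The paper's proof supplies exactly the two ingredients you are missing. First, it observes that the statement is independent of the comultiplication on $k\mcE$ (the functors $\Gamma_\fp$ do not change; see \cite[Corollary~3.3]{Benson/Iyengar/Krause:2011b}), so one may pass to a Hopf structure in which the standard $\pi$-points \eqref{eq:alpha} \emph{are} Hopf maps. Second, with that in hand, for \emph{finite dimensional} $N$ a direct diagram chase with the finite maps
\[
H^{*,*}(K\mcE_K,K)\to \Ext^{*,*}_{K\mcE_K}(N,N)\quad\text{and}\quad H^{*,*}(K\mcE_K,K)\to H^{*,*}(\test_K,K)\to \Ext^{*,*}_{\test_K}(\alpha^*N,\alpha^*N)
\]
gives $\pisupp(N)\subseteq\supp(N)$. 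Since $\Gamma_\fp(k)$ is a filtered colimit of finite dimensional modules supported in $\mcV(\fp)$, this yields $\pisupp(\Gamma_\fp k)\subseteq\mcV(\fp)$. The final narrowing to $\{\fp\}$ is then done as you suggest: for any closed $C\subsetneq\mcV(\fp)$ realised as $\pisupp(N)=\supp(N)$ by a tensor of Carlson modules, one has $\Gamma_\fp(k)\otimes N=\Gamma_\fp(N)=0$, whence $\pisupp(\Gamma_\fp k)\cap C=\varnothing$ by the tensor formula. Both halves are needed: the filtered-colimit step bounds above by $\mcV(\fp)$, and the Carlson-module step excludes every proper closed subset of $\mcV(\fp)$; neither alone gives $\{\fp\}$.
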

\begin{proof}
  This statement is independent from the Hopf structure of $k\mcE$,
  since the local cohomology functors $\Gamma_\fq$ do not change when
  the Hopf structure is changed; see
  \cite[Corollary~3.3]{Benson/Iyengar/Krause:2011b}.

Thus we can use a Hopf structure with the property that the $\pi$-points of the form \eqref{eq:alpha} are Hopf maps $\alpha\colon\test\to K\mcE_K$. 

If $N$ is a finite dimensional $k\mcE$-module  then consider  the commutative diagram of finite maps
\[ 
\xymatrix{H^{*,*}(K\mcE_K,K) \ar[r]^{\alpha^*}\ar[d]_{-\otimes N} & 
H^{*,*}(\test,K)\ar[d]^{-\otimes\alpha^*(N)}  \\ 
\Ext^{*,*}_{K\mcE_K}(N,N) \ar[r]^(0.44){\alpha^*} &
    \Ext^{*,*}_{\test}(\alpha^*(N),\alpha^*(N)).} 
 \]
If $\fp(\alpha)$ is in the $\pi$-support of $N$ then the kernel of the top map followed by the right hand map is not contained in $\fp(\alpha)$. So the kernel of the left hand map is not contained in $\fp(\alpha)$ and hence $\fp(\alpha)\in\supp(N)$. Thus we have $\pisupp(N)\subseteq\supp(N)$. 

For a subset $\fa\subseteq H^{*,*}(\mcE,k)$ let $\mcV(\fa)$ denote the
set primes $\fp\in\Proj H^{*,*}(\mcE,k)$ such that $\fa\subseteq\fp$.

Since the
module $\Gamma_\fp(k)$ is a filtered colimit of finite dimensional
modules whose support is contained in $\mcV(\fp)$, it follows that
$\pisupp(\Gamma_\fp(k))\subseteq\mcV(\fp)$.

Now consider the Carlson module $L_\xi$ for an element
$\xi\in H^{2n,0}(\mcE,k)$, see \eqref{eq:Carlson}.
\intref{Proposition}{pr:Lxi} shows that $\pisupp(L_\xi)=\supp(L_\xi)$
equals the set of primes containing $\xi$. By
\intref{Theorem}{th:pi-tensor-hom}, it follows that if we have a
sequence of such elements, $\xi_1,\dots,\xi_m$ then
\[ 
\pisupp(L_{\xi_1}\otimes \dots\otimes L_{\xi_m}) = \mcV(\xi_1)\cap\dots\cap \mcV(\xi_m)=\mcV(\xi_1,\dots,\xi_m), 
\]
which is also equal to $\supp(L_{\xi_1}\otimes \cdots \otimes L_{\xi_m})$. So there are finite dimensional modules with any given closed subset
for both its $\pi$-support and its support.  If $N$ is such a module whose support is properly contained in $\mcV(\fp)$ then $\Gamma_\fp(k)\otimes N =
\Gamma_\fp(N)$ is projective, and so again using \intref{Theorem}{th:pi-tensor-hom},
we have
\[ 
\pisupp(\Gamma_\fp(k))\cap\pisupp(N)=\varnothing\,. 
\] 
This shows that $\pisupp(\Gamma_\fp(k))\subseteq\{\fp\}$. Since $\Gamma_\fp(k)$ is not projective, it now follows from \intref{Theorem}{th:pi-proj} 
that $\pisupp(\Gamma_\fp(k))=\{\fp\}$.
\end{proof}

\begin{theorem}
\label{th:pi-equals-H}
Let $\mcE$ be an elementary supergroup scheme and $M$ a $k\mcE$-module. Then $\cosupp_\mcE(M)=\picosupp_\mcE(M)$ and  $\supp_\mcE(M)=\pisupp_\mcE(M)$.
\end{theorem}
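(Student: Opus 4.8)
The plan is to reduce the assertion, prime by prime, to \intref{Theorem}{th:pi-proj}, \intref{Theorem}{th:pi-tensor-hom}, and \intref{Proposition}{pr:pi-Gamma-p}. The mechanism is provided by the two natural isomorphisms
\[
\Gamma_\fp(M)\cong \Gamma_\fp(k)\otimes_k M \qquad\text{and}\qquad \Lambda_\fp(M)\cong \Hom_k(\Gamma_\fp(k),M)
\]
in $\StMod(k\mcE)$, valid for every $\fp\in\Proj H^{*,*}(\mcE,k)$ and every $k\mcE$-module $M$. These hold because $\StMod(k\mcE)$ is a compactly generated tensor triangulated category with compact tensor unit $k$, so that the local cohomology functor $\Gamma_\fp$ is computed by tensoring with $\Gamma_\fp(k)$ and the local homology functor $\Lambda_\fp$ by applying $\Hom_k$ out of $\Gamma_\fp(k)$; see \cite{Benson/Iyengar/Krause:2009a,Benson/Iyengar/Krause:2012b}. (The first isomorphism, for finite dimensional $M$, is already used in the proof of \intref{Proposition}{pr:pi-Gamma-p}.)

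Granting these, fix $\fp\in\Proj H^{*,*}(\mcE,k)$. Then $\fp\in\supp_\mcE(M)$ means $\Gamma_\fp(M)\ne 0$ in $\StMod(k\mcE)$, that is, the module $\Gamma_\fp(k)\otimes_k M$ is not projective. By \intref{Theorem}{th:pi-proj} this is equivalent to $\pisupp_\mcE(\Gamma_\fp(k)\otimes_k M)\ne\varnothing$, and by \intref{Theorem}{th:pi-tensor-hom}(1) together with \intref{Proposition}{pr:pi-Gamma-p} one has
\[
\pisupp_\mcE\bigl(\Gamma_\fp(k)\otimes_k M\bigr)=\pisupp_\mcE(\Gamma_\fp(k))\cap\pisupp_\mcE(M)=\{\fp\}\cap\pisupp_\mcE(M)\,,
\]
which is non-empty if and only if $\fp\in\pisupp_\mcE(M)$. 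Hence $\supp_\mcE(M)=\pisupp_\mcE(M)$. The argument for cosupport is formally dual: $\fp\in\cosupp_\mcE(M)$ means $\Lambda_\fp(M)\ne 0$, i.e.\ $\Hom_k(\Gamma_\fp(k),M)$ is not projective, equivalently (by \intref{Theorem}{th:pi-proj}) $\picosupp_\mcE(\Hom_k(\Gamma_\fp(k),M))\ne\varnothing$; by \intref{Theorem}{th:pi-tensor-hom}(2) and \intref{Proposition}{pr:pi-Gamma-p} the latter set equals $\pisupp_\mcE(\Gamma_\fp(k))\cap\picosupp_\mcE(M)=\{\fp\}\cap\picosupp_\mcE(M)$, which is non-empty precisely when $\fp\in\picosupp_\mcE(M)$. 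Thus $\cosupp_\mcE(M)=\picosupp_\mcE(M)$.

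The one delicate point — and hence the main obstacle — is the validity of the two displayed natural isomorphisms for arbitrary, possibly infinitely generated, $M$; the finite dimensional case of the first one is already invoked in \intref{Proposition}{pr:pi-Gamma-p}, and the general statements are part of the Benson/Iyengar/Krause machinery, so here one only needs to record the precise references. Everything else is a formal consequence of results established earlier in the paper.
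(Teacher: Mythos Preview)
Your proposal is correct and follows exactly the approach the paper intends: the paper's own proof simply cites \intref{Proposition}{pr:pi-Gamma-p}, \intref{Theorem}{th:pi-proj}, and \intref{Theorem}{th:pi-tensor-hom} and defers to \cite[Theorem~6.1]{Benson/Iyengar/Krause/Pevtsova:2017a}, whose argument is precisely the one you have written out. Your explicit account of the two isomorphisms $\Gamma_\fp(M)\cong\Gamma_\fp(k)\otimes_k M$ and $\Lambda_\fp(M)\cong\Hom_k(\Gamma_\fp(k),M)$ and the prime-by-prime reduction is the unpacking of that reference.
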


\begin{proof}
Using \intref{Proposition}{pr:pi-Gamma-p}, \intref{Theorem}{th:pi-proj} and \intref{Theorem}{th:pi-tensor-hom}, the proof is exactly the same as 
that of  \cite[Theorem~6.1]{Benson/Iyengar/Krause/Pevtsova:2017a}.
\end{proof}

\begin{corollary} 
For all $k\mcE$-modules $M$ and $N$ we have
\begin{enumerate}[\quad\rm(1)]
\item $\supp_\mcE(M \otimes_k N) = \supp_\mcE(M)\cap\supp_\mcE(N)$,
\item $\cosupp_\mcE(\Hom_k(M,N)) = \supp_\mcE(M) \cap\cosupp_\mcE(N)$.
\end{enumerate}
\end{corollary}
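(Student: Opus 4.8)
The plan is to reduce both identities to the $\pi$-theoretic statements already established, since \intref{Theorem}{th:pi-equals-H} identifies cohomological support and cosupport with their $\pi$-point counterparts for every $k\mcE$-module. Concretely, I would apply \intref{Theorem}{th:pi-equals-H} to each of the modules appearing in the claim: for (1) to $M\otimes_k N$, to $M$, and to $N$, which rewrites it as
\[
\pisupp_\mcE(M\otimes_k N)=\pisupp_\mcE(M)\cap\pisupp_\mcE(N);
\]
and for (2) to $\Hom_k(M,N)$, to $M$, and to $N$, which rewrites it as
\[
\picosupp_\mcE(\Hom_k(M,N))=\pisupp_\mcE(M)\cap\picosupp_\mcE(N).
\]
These are precisely parts (1) and (2) of \intref{Theorem}{th:pi-tensor-hom}, so the proof closes at once.

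There is no genuine obstacle here: all the content has been absorbed into the preceding results, and the corollary is purely formal. It is worth remarking why no additional work is needed. The tensor–hom formulas for $\pi$-support were proved directly at the level of $\pi$-invariants in \intref{Theorem}{th:pi-tensor-hom}, via the change-of-Hopf-structure trick of \intref{Proposition}{pr:superFP} together with the detection criterion of \intref{Lemma}{le:tensor-hom-fpd}; and the coincidence of the cohomological and $\pi$-theoretic notions in \intref{Theorem}{th:pi-equals-H}, which itself rests on \intref{Proposition}{pr:pi-Gamma-p}, is exactly what allows those formulas to be transported to $\supp_\mcE$ and $\cosupp_\mcE$.

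One could instead attempt a direct cohomological argument, manipulating the local cohomology and homology functors $\Gamma_\fp$ and $\Lambda_\fp$ against the $H^{*,*}(\mcE,k)$-linear tensor structure on $\StMod(k\mcE)$; but this would amount to re-deriving \intref{Theorem}{th:pi-tensor-hom} in a different language, so the route through the $\pi$-invariants is the economical one and is the one I would present.
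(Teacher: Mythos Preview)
Your proposal is correct and matches the paper's own proof exactly: the paper simply says the corollary follows from \intref{Theorem}{th:pi-equals-H} and \intref{Theorem}{th:pi-tensor-hom}. Your added commentary on why no further work is needed is accurate but not required.
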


\begin{proof}
This follows from \intref{Theorem}{th:pi-equals-H} and \intref{Theorem}{th:pi-tensor-hom}.
\end{proof}

Given these expressions for the (cohomological) support and cosupport  one can mimic the proof of  \cite[Theorem~7.1]{Benson/Iyengar/Krause/Pevtsova:2017a} to deduce:

\begin{theorem}
\label{th:localising}
Let $\mcE$ be an elementary supergroup scheme over $k$. The stable module category $\StMod(k\mcE)$ is stratified as a $\bbZ/2$-graded  triangulated category by the natural action of the  cohomology ring $H^{*,*}(\mcE,k)$. Therefore the assignment
\[ 
\bfC \mapsto \bigcup_{M\in\bfC}\supp_\mcE(M) 
\]
gives a one to one correspondence between the localising subcategories of $\StMod(k\mcE)$ invariant under the parity change operator $\Pi$ and the subsets of $\Proj H^{*,*}(\mcE,k)$. \qed
\end{theorem}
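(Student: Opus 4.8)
The plan is to establish that $\StMod(k\mcE)$ is stratified by the canonical action of the graded ring $R\colonequals H^{*,*}(\mcE,k)$, working throughout with $R$ regarded as graded by $\bbZ\times\bbZ/2$ so that ``localising subcategory'' automatically means one closed under the parity change $\Pi$; the bijection between $\Pi$-invariant localising subcategories and subsets of $\Proj R$ is then the formal output of the stratification machinery of \cite{Benson/Iyengar/Krause:2011a}. Since $R$ is finitely generated over $k$, hence noetherian, and $\StMod(k\mcE)$ is a compactly generated tensor triangulated category on which $R$ acts, the local--global principle holds for formal reasons, so stratification reduces to the \emph{minimality} statement: for each $\fp\in\Proj R$ the localising subcategory $\Gamma_\fp\StMod(k\mcE)$ has no proper non-zero localising subcategory. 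A preliminary point to record is that this stratum is always non-zero: by \intref{Proposition}{pr:pi-Gamma-p} one has $\pisupp_\mcE(\Gamma_\fp(k))=\{\fp\}$, so $\Gamma_\fp(k)$ is not projective by \intref{Theorem}{th:pi-proj}; in particular $\supp_\mcE(k)=\Proj R$, which is what makes the resulting correspondence surjective onto all subsets.

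For minimality, fix $\fp$ and a non-zero object $M$ of $\Gamma_\fp\StMod(k\mcE)$; since this stratum is generated by $\Gamma_\fp(k)$, it suffices to show $\Gamma_\fp(k)\in\Loc(M)$. Choose a $\pi$-point $\alpha\colon\test_K\to K\mcE_K$ with $\fp(\alpha)=\fp$, which exists by the Proposition following the Example in \intref{Section}{se:pi-and-cohomology}. The idea is to transport the problem along $\alpha^*$ into $\uGProj(\test_K)\simeq\StMod(K[t]/(t^p))$ (\intref{Proposition}{pr:detect}), a category that is trivially stratified because $\Proj H^{*}(K[t]/(t^p),K)$ is a single point, so that every non-zero object there generates the whole category. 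Concretely, using $\pisupp_\mcE=\supp_\mcE$ (\intref{Theorem}{th:pi-equals-H}) together with $\pisupp_\mcE(\Gamma_\fp(k))=\{\fp\}$, a non-zero $\fp$-local $\fp$-torsion object acquires non-zero image under $\alpha^*$ after base change, hence generates $\StMod(K[t]/(t^p))$; transferring the conclusion back to $\StMod(k\mcE)$ and invoking the tensor and $\Hom$ formulas of \intref{Theorem}{th:pi-tensor-hom} then places $\Gamma_\fp(k)$ in $\Loc(M)$. This is precisely the argument of \cite[Theorem~7.1]{Benson/Iyengar/Krause/Pevtsova:2017a}, carried out over $\test$ rather than directly over $K[t]/(t^p)$.

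The step I expect to be the main obstacle is exactly the one where this reasoning is \emph{not} purely formal: a $\pi$-point is in general not a homomorphism of Hopf algebras, so $\alpha^*$ need not be monoidal, and neither the projection formula nor compatibility of $\Gamma_\fp$ with restriction is available. This is handled exactly as in \intref{Lemma}{le:tensor-hom-fpd}: by \intref{Proposition}{pr:reps} one may assume $\alpha$ has the standard form \eqref{eq:alpha}, and then one replaces the comultiplication on $K\mcE_K$ by the one in which every $s_i$ and $\sigma$ is primitive, making $\alpha$ a Hopf map. By \cite[Corollary~3.3]{Benson/Iyengar/Krause:2011b} the functors $\Gamma_\fp$, and hence the very notion of stratification, are unchanged by this, while the correction introduced in the diagonal lands in $I\otimes I$ and is absorbed using \intref{Proposition}{pr:superFP} together with \intref{Theorem}{th:rank}. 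With $\alpha^*$ made monoidal the reduction to $\StMod(K[t]/(t^p))$ goes through, minimality follows, and the stated one-to-one correspondence between $\Pi$-invariant localising subcategories of $\StMod(k\mcE)$ and subsets of $\Proj R$ is then a formal consequence of stratification.
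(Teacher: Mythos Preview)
Your detour through a chosen $\pi$-point is not the argument the paper has in mind, and the step you flag as ``transferring the conclusion back to $\StMod(k\mcE)$'' is exactly where your outline has a gap. The proof of \cite[Theorem~7.1]{Benson/Iyengar/Krause/Pevtsova:2017a}, which the paper simply quotes, does \emph{not} restrict along a $\pi$-point and then push generation back; it is a purely formal argument run entirely inside $\StMod(k\mcE)$ from the two formulas in the Corollary preceding the theorem. Concretely: for nonzero $M$ with $\supp_\mcE(M)=\{\fp\}$ one has $\Hom_k(M,M)\ne 0$, so by the Hom formula $\{\fp\}\cap\cosupp_\mcE(M)\ne\varnothing$, i.e.\ $\fp\in\cosupp_\mcE(M)$. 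Applying this to any nonzero $N\in\Gamma_\fp\StMod(k\mcE)$ and then the Hom formula again gives $\cosupp_\mcE\Hom_k(M,N)=\{\fp\}$, so $\Hom_k(M,N)\ne 0$ and hence $\sHom^{*}(M,N)\ne 0$. This is the standard criterion for minimality of $\Gamma_\fp\StMod(k\mcE)$, and no $\alpha^*$, no change of Hopf structure, and no transport across an adjunction are needed at this stage; all of that has already been absorbed into \intref{Theorem}{th:pi-tensor-hom} and \intref{Theorem}{th:pi-equals-H}.

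Your route, by contrast, requires producing $\Gamma_\fp(k)\in\Loc(M)$ out of the statement that $\alpha^*(M_K)$ generates $\uGProj(\test_K)$. Even after making $\alpha$ a Hopf map, you would need a left (or right) adjoint to $\alpha^*$ landing in $\StMod(K\mcE_K)$ together with a projection formula, and then a further descent from $K$ to $k$. Since $\alpha$ is only of finite flat dimension rather than flat, it is not clear that induction $K\mcE_K\otimes_{\test_K}-$ is well defined on $\uGProj(\test_K)$, and you do not address this. Your invocation of \intref{Proposition}{pr:superFP} and \intref{Theorem}{th:rank} in this paragraph is also misplaced: those results are used to prove the tensor/Hom formulas, not to move a localising-subcategory statement across $\alpha^*$. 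In short, the ingredients you list in your first paragraph are the right ones, but the way to combine them is the direct formal argument above, not a reduction to $\StMod(K[t]/(t^p))$.
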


From the preceding result, and following the by now well-trodden path
discovered by Neeman~\cite{Neeman:1992a} one obtains a classification
of thick subcategories of $\stmod k\mcE$; see also the proof of  \cite[Theorem~7.3]{Benson/Iyengar/Krause/Pevtsova:2017a}.

\begin{corollary}
\label{co:thick}
Let $\mcE$ be an elementary supergroup scheme over $k$. There is a one to one correspondence between  thick subcategories of
$\stmod(k\mcE)$ invariant under $\Pi$  and the specialisation closed subsets of $\Proj H^{*,*}(\mcE,k)$. \qed
\end{corollary}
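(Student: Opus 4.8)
The plan is to derive the classification of thick subcategories from the stratification of $\StMod(k\mcE)$ by the cohomology ring $H^{*,*}(\mcE,k)$ recorded in \intref{Theorem}{th:localising}, by running the by now standard argument of Neeman~\cite{Neeman:1992a}; concretely I would follow the proof of \cite[Theorem~7.3]{Benson/Iyengar/Krause/Pevtsova:2017a} essentially line by line.

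First I would observe that, since $k\mcE$ is a finite dimensional self-injective algebra, $\stmod(k\mcE)$ is precisely the subcategory of compact objects of $\StMod(k\mcE)$, and that it is automatically stable under $\Pi$ and idempotent complete, being Krull--Schmidt. Neeman's localisation theorem then provides mutually inverse bijections $\mcC\mapsto\Loc(\mcC)$ and $\mcL\mapsto\mcL\cap\stmod(k\mcE)$ between the $\Pi$-invariant thick subcategories of $\stmod(k\mcE)$ and the $\Pi$-invariant localising subcategories of $\StMod(k\mcE)$ that are generated by objects of $\stmod(k\mcE)$. Combined with the bijection of \intref{Theorem}{th:localising}, this reduces the statement to identifying the subsets of $\Proj H^{*,*}(\mcE,k)$ that correspond, via $\mcL\mapsto\bigcup_{N\in\mcL}\supp_\mcE(N)$, to the localising subcategories generated by compacts, and to checking that these are exactly the specialisation closed subsets.

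For one direction, every finitely generated $k\mcE$-module $M$ has closed support in $\Proj H^{*,*}(\mcE,k)$: by \intref{Theorem}{th:pi-equals-H} one has $\supp_\mcE(M)=\pisupp_\mcE(M)$, and this is a closed set, being the variety of the annihilator of the finitely generated $H^{*,*}(\mcE,k)$-module $\Ext^{*,*}_{k\mcE}(M,M)$ \cite{Benson/Iyengar/Krause:2009a} (concretely, the zero set described by \intref{Theorem}{th:carlson}, after base change to an algebraic closure of $k$, which does not affect closedness). Since $\supp_\mcE$ carries coproducts, triangles and retracts to unions of supports, for a thick subcategory $\mcC$ one has $\bigcup_{N\in\Loc(\mcC)}\supp_\mcE(N)=\bigcup_{M\in\mcC}\supp_\mcE(M)$, a union of closed sets, hence specialisation closed. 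For the converse, every closed subset of $\Proj H^{*,*}(\mcE,k)$ is the support of a single finitely generated module; this is exactly what is shown inside the proof of \intref{Proposition}{pr:pi-Gamma-p}, where, using \intref{Proposition}{pr:Lxi}, \intref{Theorem}{th:pi-tensor-hom}, and the identification of $\Proj H^{*,*}(\mcE,k)$ with the standard projective space $\Proj k[\bsx,\zeta^2]$, a suitable tensor product of Carlson modules $L_{\xi_1}\otimes_k\cdots\otimes_k L_{\xi_m}$ (with $\xi_j\in H^{2n,0}(\mcE,k)$) is exhibited with support $\mcV(\xi_1,\dots,\xi_m)$. A specialisation closed set $V$ is a union of closed sets, so $V=\bigcup_{M\in\mcC}\supp_\mcE(M)$ for the thick subcategory $\mcC$ generated by the corresponding Carlson-module tensor products, and $\Loc(\mcC)$ realises $V$. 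Injectivity of $\mcC\mapsto\bigcup_{M\in\mcC}\supp_\mcE(M)$ on thick subcategories of compacts follows from \intref{Theorem}{th:localising} together with Neeman's bijection above, since equal supports force $\Loc(\mcC_1)=\Loc(\mcC_2)$, hence $\mcC_1=\mcC_2$. The inverse correspondence sends a specialisation closed $V$ to $\{M\in\stmod(k\mcE)\mid\supp_\mcE(M)\subseteq V\}$.

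The main obstacle here is not a new homological or representation-theoretic input: the two facts that really matter---closedness of the support of a finite module and realisability of an arbitrary closed subset by a finite module---have already been extracted from the rank-variety description in \intref{Section}{se:rank} and from the Carlson modules used in \intref{Section}{se:support-and-cosupport}. What requires care is the bookkeeping, namely running Neeman's compact-generation machinery in the presence of the two gradings on $\StMod(k\mcE)$ and keeping the $\Pi$-invariance hypotheses aligned on the two sides of the correspondence; for this one simply transcribes \cite[\S7]{Benson/Iyengar/Krause/Pevtsova:2017a}.
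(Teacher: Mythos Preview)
Your proposal is correct and takes exactly the same approach as the paper: the paper does not spell out a proof at all, but simply says that the corollary follows from \intref{Theorem}{th:localising} via Neeman's standard argument~\cite{Neeman:1992a}, referring to the proof of \cite[Theorem~7.3]{Benson/Iyengar/Krause/Pevtsova:2017a} for the details. Your write-up is a faithful (and more detailed) unpacking of precisely that route.
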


\section{The remaining cases}
\label{se:others}
The gist of this section is that the results in
\intref{Section}{se:supportsets} up to \intref{Section}{se:support-and-cosupport} carry over to the other elementary supergroup schemes recalled in \intref{Section}{se:prelim}. We begin with the supergroup scheme $\mcE$ with group algebra
\begin{equation}
\label{se:kE-exterior}
k\mcE \colonequals k[s_1,\dots,s_n,\sigma]/(s_1^p,\dots,s_n^p,\sigma^2)
\end{equation}
with $|s_i|=0$ and $|\sigma|=1$. As an algebra $k\mcE$ is the tensor product of the group algebra of an elementary abelian $p$-group of rank $n$, and an exterior algebra on one generator, $\sigma$. This is again a complete intersection so the results of \intref{Appendix}{se:appendix} apply. In particular, there is a notion of support sets for $k\mcE$-modules, which are subsets of $\bbP^{n}(k)$, and these detect projectivity. The theory of $\pi$-points also carries over, and \intref{Theorem}{th:detection} carries over to these algebras. The only difference is that the representatives of $\pi$-points are given by maps $\alpha_{\bsa}\colon \test_K\to K\mcE_K$, where $K$ is a field extension of $k$, and 
\begin{align}
\label{eq:alpha-exterior}
  \begin{split}
     \alpha_{\bsa}(t) &=  a_1 s_1 + \cdots +  a_n s_n\,, \\
      \alpha_{\bsa}(\tau)&=a_{n+1}\sigma \\
  \end{split}
\end{align}
for $\bsa\in \bbA^{n+1}(K)$;  cf.~ \eqref{eq:alpha}. This has the
consequence that the rank variety of a finitely generated
$k\mcE$-module $M$, defined only when $k$ is algebraically closed,
consists of those points $\bsa\in \bbA^{n+1}(k)$ for which the square zero map
\begin{equation}
\label{eq:Mmatrix-exterior}
  \begin{pmatrix} 
  \alpha_{\bsa}(\tau) & \alpha_{\bsa}(t) \\ 
  -\alpha_{\bsa}(t) &    -\alpha_{\bsa}(\tau) 
  \end{pmatrix} \colon M\oplus M \lra M\oplus M
 \end{equation} 
 has rank at most $\rank_kM$. It is an affine cone in $\bbA^n(k)$;
 what is more, it is invariant under the $\bbZ/2$-action that negates
 the last coordinate: $a_{n+1}\mapsto -a_{n+1}$, because $M$ is
 $\bbZ/2$-graded. The map $V^r_{\mcE}(M)\to \hysupp_{\mcE}(M)$ that
 comes up in the proof of \intref{Theorem}{th:carlson} is equivariant
 with respect to the usual Frobenius map.

Finally we come to elementary abelian $p$-groups. The observation that tackles this case applies more generally to any finite  group scheme $\mcE$, viewed as a supergroup scheme. For such an $\mcE$ and any map of $k$-algebras $\alpha\colon \test\to k\mcE$, one has  $\alpha(\tau)=0$ for degree reasons, so there is an induced map of $k$-algebras
\[
\bar{\alpha} \colon \test/(\tau) \lra k\mcE\,.
\]
Observe that $\test/(\tau)\cong k[t]/(t^p)$, so $\bar{\alpha}$ is
a candidate for a  $\pi$-point of $\mcE$, in the sense of \cite{Friedlander/Pevtsova:2007a}.

\begin{lemma}
Let $\mcE$ be any  finite group scheme.  A map of $k$-algebras $\alpha\colon \test \to k\mcE$ has finite flat dimension if and only if the induced map $\bar{\alpha}\colon \test/(\tau) \to k\mcE$ is flat.
\end{lemma}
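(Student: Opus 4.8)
The plan is to deduce this from \intref{Theorem}{th:rank}, staying close to the analysis of $\test$ in \intref{Section}{se:test}. First I would observe that, $\mcE$ being an ordinary finite group scheme, $k\mcE$ is concentrated in even degree, so $\alpha(\tau)=0$ for degree reasons. Put $\theta\colonequals\alpha(t)\in k\mcE$. Then $\theta^{p}=\alpha(t^{p})=\alpha(\tau^{2})=0$, so $\alpha$ factors through $\test\twoheadrightarrow\test/(\tau)\cong k[t]/(t^p)$ via $\bar\alpha$, which sends the class of $t$ to $\theta$. In particular $\tau$ --- and hence $t$ --- acts nilpotently on the $\test$-module $\alpha^*(k\mcE)$, so \intref{Theorem}{th:rank} applies to it.

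Now I would translate both sides of the asserted equivalence into linear algebra over $k$. Since $k[t]/(t^p)$ is artinian local and $k\mcE$ has finite length over it via $\bar\alpha$, flatness of $\bar\alpha$ is the same as $k\mcE$ being a free $k[t]/(t^p)$-module. Writing $k\mcE\cong\bigoplus_i k[t]/(t^{n_i})$ with $1\le n_i\le p$ by the structure theorem, this freeness is equivalent to the equality $\theta^{p-1}(k\mcE)=\ker(\theta\colon k\mcE\to k\mcE)$: one always has $\theta^{p-1}(k\mcE)\subseteq\ker(\theta)$, the left-hand side has $k$-dimension $\#\{i:n_i=p\}$ and the right-hand side has $k$-dimension equal to the number of summands, so equality forces every $n_i$ to be $p$. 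On the other side, \intref{Theorem}{th:rank} says $\alpha^*(k\mcE)$ has finite flat dimension if and only if the square zero endomorphism
\[
\begin{pmatrix} 0 & \theta\\ -\theta^{p-1} & 0\end{pmatrix}\colon k\mcE\oplus k\mcE\lra k\mcE\oplus k\mcE
\]
has maximal image in the sense of \intref{Definition}{de:maximal-rank}. A glance shows this map has image $\theta(k\mcE)\oplus\theta^{p-1}(k\mcE)$ and kernel $\ker(\theta^{p-1})\oplus\ker(\theta)$, so maximal image is exactly the conjunction of $\theta(k\mcE)=\ker(\theta^{p-1})$ and $\theta^{p-1}(k\mcE)=\ker(\theta)$.

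Combining these finishes the proof: if $\bar\alpha$ is flat then $k\mcE$ is $k[t]/(t^p)$-free, so both displayed equalities hold and $\alpha$ has finite flat dimension; conversely, maximal image yields in particular $\theta^{p-1}(k\mcE)=\ker(\theta)$, which already forces $k\mcE$ to be $k[t]/(t^p)$-free, i.e.\ $\bar\alpha$ to be flat. I do not anticipate a genuine obstacle; the only step that is not wholly automatic is the elementary fact that, for a nilpotent operator $\theta$ with $\theta^{p}=0$ on a finite dimensional space $M$, the equality $\theta^{p-1}(M)=\ker(\theta)$ characterises freeness of $M$ over $k[t]/(t^p)$, and this is immediate from the structure theorem as indicated above. (Alternatively one could argue by change of rings along $\test\twoheadrightarrow k[t]/(t^p)$, whose kernel is generated by the nonzerodivisor $\tau$; but going through \intref{Theorem}{th:rank} is shorter and matches the style of this section.)
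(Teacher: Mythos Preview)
Your proof is correct, but it takes a different route from the paper's. You reduce both directions to the rank criterion of \intref{Theorem}{th:rank}, translating finite flat dimension over $\test$ into the maximal-image condition for the $2\times 2$ matrix, and then comparing this with the linear-algebraic characterisation of freeness over $k[t]/(t^p)$ via the equality $\theta^{p-1}(k\mcE)=\ker(\theta)$. The paper instead argues each direction separately: for the forward implication it simply notes that $\tau$ is a non-zero-divisor in $\test$, so $\test\to\test/(\tau)$ has flat dimension one, whence flatness of $\bar\alpha$ immediately gives finite flat dimension for $\alpha$; for the converse it uses the same decomposition $k\mcE\cong\bigoplus k[t]/(t^{n_i})$ you use, but then checks directly from the periodic resolution \eqref{eq:resol} that $k[t]/(t^{i})$ has finite flat dimension over $\test$ only when $i=p$. (This is essentially the alternative you sketch in your closing parenthetical remark.) The paper's argument is a little more conceptual and avoids unpacking the matrix; yours has the virtue of showing that \intref{Theorem}{th:rank} already contains the lemma, and makes the link between the two freeness criteria explicit.
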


\begin{proof}
Since $\tau$ is not a zero divisor in $\test$, the canonical surjection  $\ve\colon \test\to \test/(t)$ has flat dimension one. Thus if $\bar{\alpha}$ is flat, $\alpha$ has finite flat dimension.

Assume the flat dimension of $\alpha$ is finite, that is to say, the
flat dimension of $k\mcE$ viewed as a module over $\test$ via $\alpha$
is finite. The action of $\test$ on $k\mcE$ factors through
$\test/(\tau)\cong k[t]/(t^p)$, and as a $k[t]/(t^p)$-module $k\mcE$
is a direct sum of copies of the cyclic modules $k[t]/(t^i)$, for
$1\le i\le p$. Evidently as an $\test$-module $k[t]/(t^i)$ has finite
flat dimension only when $i=p$; see \eqref{eq:resol}. Thus the
hypothesis on $\alpha$ implies that $k\mcE$ is a direct sum of copies
of $k[t]/(t^p)$, and hence  $\bar{\alpha}$ is flat, as claimed.
\end{proof}

Given the preceding result it is clear that $\pi$-points for elementary abelian $p$-groups defined via $\test$ have the same properties as the  ones introduced by Carlson~\cite{Carlson:1983a}, so nothing more needs to be said about this situation.

\appendix
\section{Complete intersections}
\label{se:appendix}
In this section we describe support sets of  modules over complete intersections, following Avramov~\cite{Avramov:1989a}, with a view towards detecting finite flat dimension of modules over complete intersections. With this in mind  we have chosen to work with complete intersection rings that are quotients of power series rings, and modules of finite Loewy length. This simplifies the exposition at various points. This utilitarian approach is also why we do not develop the material beyond \intref{Theorem}{th:appendix}, which can be taken as a starting point for a $\pi$-point approach to arbitrary modules over complete intersections.

Throughout this appendix $k$ will be field; there are no restrictions on the characteristic. Set $P\colonequals \pos {\bsx}$, the power ring over $k$ in indeterminates $\bsx\colonequals x_1,\dots,x_c$. In particular $P$ is a noetherian local ring, with maximal ideal $(\bsx)$.  It is to ensure these properties that we work with the ring of formal power series. An alternative would be to take the localisation of the polynomial ring over $(\bsx)$, localised at  $(\bsx)$.
 
Let $\bsf\colonequals f_1,\dots,f_c$ be elements in $P$ such that 
\begin{enumerate}
\item
$(\bsf)$ is in $(\bsx)^2$, and 
\item 
$f_i$ is not a zero-divisor in $P/(f_1,\dots, f_{i-1})$ for $1\le i\le i$.
\end{enumerate}
Condition (2) states that the $\bsf$ is a regular sequence in $P$, and hence the ring
\[
R\colonequals P/(f_1,\dots,f_c)
\]
is a complete intersection local ring, of codimension $c$ in $P$. Let $\fm$ denote the maximal ideal $(\bsx)R$ of $R$. 

\subsection*{Generic hypersurfaces}
For each $\bsa \colonequals [a_1,\dots,a_{c}]$ in $\bbP^{c-1}(k)$ set
\[
h_{\bsa} (\bsx)\colonequals a_1f_1 + \cdots + a_cf_c \quad \text{and}\quad P_{\bsa}\colonequals P/(h_{\bsa}(\bsx))\,.
\]
While $h_{\bsa}(\bsx)$ depends on a representative for $\bsa$, the ideal it generates does not, so there is no ambiguity in the notation $P_{\bsa}$. Since $h_{\bsa}(\bsx)$ is in $(\bsf)$ there is a surjection 
\[
\beta_{\bsa}\colon P_{\bsa}  \lra R\,.
\]
Let $M$ be an $R$-module and $\beta^*_{\bsa}(M)$ its restriction to $P_{\bsa}$.  We shall be interested in the \emph{support set} of $M$, defined to be:
\begin{equation}
\hysupp_R(M)\colonequals \{\bsa\in \bbP^{c-1}(k) \mid \fdim \beta_{\bsa}^*(M) =\infty \}
\end{equation}
The language is borrowed from \cite[\S3]{Avramov/Iyengar:2018a},
whilst the result below is from~\cite[Corollaries 3.11,
3.12]{Avramov:1989a}. In these sources, projective dimension, rather
than flat dimension, is used, but this makes no difference, for one is finite
if and only if the other is finite, and the dimensions coincide when $M$ is finitely
generated, because finitely generated flat modules are projective.

\begin{theorem}
\label{th:Av}
Assume $k$ is algebraically closed and that $M$ is a finitely generated $R$-module. The subset $\hysupp_{R}(M)\subseteq \bbP^{c-1}(k)$ is closed in the Zariski topology. One has $\hysupp_{R}(M)=\varnothing$ if and only if $M$ has finite flat dimension. \qed
\end{theorem}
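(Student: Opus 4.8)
The plan is to deduce both assertions from Avramov's theory of cohomological support varieties over complete intersections \cite{Avramov:1989a}; the only genuinely new point is a bookkeeping translation between flat and projective dimension. Since $M$ is finitely generated over the noetherian ring $R$, a flat $R$-module occurring in a resolution is projective, so $\fdim_R M = \operatorname{pd}_R M$; likewise $\beta_{\bsa}^*(M)$ stays finitely generated over $P_{\bsa}$, so $\fdim_{P_{\bsa}}\beta_{\bsa}^*(M) = \operatorname{pd}_{P_{\bsa}}\beta_{\bsa}^*(M)$. Hence $\hysupp_R(M)$ is unchanged if flat dimension is replaced by projective dimension throughout, and we may quote \cite{Avramov:1989a} verbatim.

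One inclusion of the emptiness criterion is elementary and needs no support varieties. Suppose $\operatorname{pd}_R M < \infty$ and fix $\bsa\in\bbP^{c-1}(k)$; choosing a representative with, say, $a_1\ne 0$, the sequence $h_{\bsa}, f_2,\dots,f_c$ again generates $(\bsf)$ and is $P$-regular, so $R\cong P_{\bsa}/(\bar f_2,\dots,\bar f_c)$ with $\bar f_2,\dots,\bar f_c$ a $P_{\bsa}$-regular sequence. Iterating the change-of-rings bound ``factoring out a nonzerodivisor raises projective dimension by at most one'' $c-1$ times gives $\operatorname{pd}_{P_{\bsa}} M \le \operatorname{pd}_R M + (c-1) < \infty$, so $\bsa\notin\hysupp_R(M)$; thus $\hysupp_R(M)=\varnothing$.

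For the converse and for closedness, I would bring in the support variety $V^*_R(M)\subseteq\bbP^{c-1}(k)$: the graded module $\Ext^*_R(M,k)$ is finitely generated over the polynomial ring $k[\chi_1,\dots,\chi_c]$ of cohomology operators (degree $2$) attached to the regular sequence $\bsf$, and $V^*_R(M)$ is the projectivised zero locus of its annihilator. By construction this is a Zariski-closed subset of $\bbP^{c-1}(k)$, and it is empty exactly when $\Ext^i_R(M,k)=0$ for $i\gg 0$, i.e. when $\operatorname{pd}_R M < \infty$. The content of \cite[Corollaries~3.11,~3.12]{Avramov:1989a} is the identification $\hysupp_R(M) = V^*_R(M)$, whose crux is the equivalence, for $\bsa\ne 0$, ``$\operatorname{pd}_{P_{\bsa}}\beta_{\bsa}^*(M) = \infty \iff \bsa \in V^*_R(M)$'', proved by matching the eventual two-periodicity of the minimal $P_{\bsa}$-free resolution of $M$ with the action of the operator $\chi_{\bsa}=\sum a_i\chi_i$ on $\Ext^*_R(M,k)$. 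Granting this, closedness is immediate, and $\hysupp_R(M)=\varnothing$ forces $V^*_R(M)=\varnothing$, hence $\operatorname{pd}_R M < \infty$, completing the cycle.

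The main obstacle is precisely this identification $\hysupp_R(M)=V^*_R(M)$ — equivalently the equivalence ``$\bsa\in V^*_R(M)\iff\operatorname{pd}_{P_{\bsa}}\beta_{\bsa}^*(M)=\infty$'' — which is the technical heart of \cite{Avramov:1989a} and which we do not reprove. Everything else (the flat/projective translation, the elementary inclusion, and the formal behaviour of $V^*_R(M)$) is routine. The hypothesis that $k$ is algebraically closed is used only to ensure that a closed subscheme of $\bbP^{c-1}$ is recovered from its set of $k$-points, so that the homogeneous-ideal description of $V^*_R(M)$ genuinely cuts out a Zariski-closed subset of $\bbP^{c-1}(k)$.
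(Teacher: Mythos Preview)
Your proposal is correct and matches the paper's own treatment: the paper gives no proof beyond citing \cite[Corollaries~3.11,~3.12]{Avramov:1989a} together with exactly the flat-versus-projective dimension remark you make. Your write-up is in fact more detailed than the paper's, since you spell out the elementary direction and sketch what the cited identification $\hysupp_R(M)=V^*_R(M)$ actually says.
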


We need an extension of the preceding result that applies also to infinitely generated modules. For simplicity, we consider only $R$-modules $M$ of \emph{finite Loewy length}, meaning $\fm^s M =0$ for some $s\gg 0$. Such modules form a Serre subcategory of $\Mod R$, though not a localising subcategory. The main case of interest is when $R$ has codimension $n$, equivalently, of  Krull dimension zero,  in which case each $R$-module  has this property. However the proof of  \intref{Theorem}{th:appendix} below involves  an induction on the codimension of $R$, and then it becomes convenient to work with modules of finite Loewy length.

The result however requires only that the module is $\fm$-torsion, that is to say that each element is annihilated by a power of $\fm$.

\begin{lemma}
\label{le:pd-test}
When $M$ is  $\fm$-torsion the following conditions are equivalent:
\begin{enumerate}[\quad\rm(a)]
\item
$\fdim_RM < \infty$
\item
$\Tor^R_i(k,M)=0$ for $i\gg 0$;
\item
$\Ext_R^i(k,M)=0$ for $i\gg 0$.
\end{enumerate}
\end{lemma}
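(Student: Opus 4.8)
The plan is to reduce the statement to two structural properties of $R$. First, $R$ is a complete local $k$-algebra with residue field $k$, so the injective hull $E\colonequals E_R(k)$ satisfies $E\cong\Hom_k(R,k)$; and since $R$ is a complete intersection it is Gorenstein, so $\injdim_R R=d<\infty$ with $d\colonequals\dim R$, whence also $\fdim_R E=d$. Second, when $M$ is $\fm$-torsion one has $\Supp_R M\subseteq\{\fm\}$, so every Bass number $\mu^j(\fp,M)$ with $\fp\ne\fm$ vanishes and the $j$-th term of the minimal injective resolution of $M$ is a direct sum of $\mu^j(\fm,M)=\dim_k\Ext^j_R(k,M)$ copies of $E$; consequently $\injdim_R M=\sup\{\,j\mid\Ext^j_R(k,M)\ne 0\,\}$. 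I would record these first, noting also that flat modules over a Gorenstein ring of dimension $d$ have injective dimension at most $d$.

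Next I would settle $(a)\Leftrightarrow(c)$ together with $(a)\Rightarrow(b)$. If $\fdim_R M<\infty$, a bounded flat resolution of $M$ has terms of injective dimension at most $d$, so $\injdim_R M<\infty$ and in particular $\Ext^i_R(k,M)=0$ for $i\gg 0$: this is $(a)\Rightarrow(c)$. Conversely, by the formula above $(c)$ says $\injdim_R M<\infty$, and a bounded injective resolution of $M$ then has terms that are direct sums of copies of $E$, hence of flat dimension at most $d$, giving $\fdim_R M<\infty$: this is $(c)\Rightarrow(a)$. And $(a)\Rightarrow(b)$ is immediate, since $\fdim_R M\le e$ forces $\Tor^R_i(-,M)=0$ for $i>e$.

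The remaining implication $(b)\Rightarrow(a)$ is the crux, and I would handle it by passing to the $k$-linear dual $M^\vee\colonequals\Hom_k(M,k)$, which makes sense as $M$ is a $k$-vector space. Adjunction over $k$ supplies natural isomorphisms $\Ext^i_R(k,M^\vee)\cong\Hom_k(\Tor^R_i(k,M),k)$ and, after testing flat and injective dimension on finitely generated modules, the identities $\injdim_R M^\vee=\fdim_R M$ and $\fdim_R M^\vee=\injdim_R M$ --- all legitimate since $R$ is noetherian, so finitely generated modules have resolutions by finitely generated free modules. Hence $(b)$ is equivalent to the vanishing $\Ext^i_R(k,M^\vee)=0$ for $i\gg 0$. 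When $M$ has finite Loewy length, $M^\vee$ is again $\fm$-torsion, so applying the implication $(c)\Rightarrow(a)$ to $M^\vee$ gives $\fdim_R M^\vee<\infty$, hence $\injdim_R M<\infty$, hence $\fdim_R M<\infty$ by the Gorenstein argument above --- which is $(a)$. The general $\fm$-torsion case then follows either by the standard fact that $\fdim_R M=\sup\{\,i\mid\Tor^R_i(k,M)\ne 0\,\}$ over a noetherian local ring, or, as flagged in the appendix, by inducting on the codimension of the presentation $R=P/(\bsf)$ via change of rings. I expect this last step --- passing from eventual vanishing of $\Tor^R_\ast(k,M)$ to finite flat dimension of a module that need not be finitely generated --- to be the only genuine obstacle; everything in $(a)\Leftrightarrow(c)$ is Gorenstein bookkeeping made easy by the simple shape of the minimal injective resolution of an $\fm$-torsion module, and it is exactly there that the $\fm$-torsion hypothesis earns its keep.
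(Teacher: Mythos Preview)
Your approach is essentially the paper's: both rest on the Gorenstein equivalence of finite flat and finite injective dimension, together with the identities $\fdim_R M=\sup\{i\mid\Tor^R_i(k,M)\ne 0\}$ and, for $\fm$-torsion $M$, $\injdim_R M=\sup\{i\mid\Ext^i_R(k,M)\ne 0\}$. The paper simply cites Christensen for the first and Avramov--Foxby, Propositions~5.3.F and~5.3.I, for the second; you unpack these, which is fine.

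Two comments. First, the identification $E_R(k)\cong\Hom_k(R,k)$ is false whenever $\dim R>0$: for $R=\pos{x}$ the injective hull is countable-dimensional over $k$, whereas $\Hom_k(R,k)$ is not. Fortunately your argument does not need this; the fact you actually use, $\fdim_R E_R(k)\le d$, follows directly from $R$ being Gorenstein of dimension $d$ (injectives have injective dimension zero, hence finite flat dimension), so just drop the erroneous identification. Second, your Matlis-style dual argument for $(b)\Rightarrow(a)$ is pleasant but only handles finite Loewy length, because $\Hom_k(M,k)$ need not be $\fm$-torsion when $M$ is merely $\fm$-torsion. You then invoke the ``standard fact'' $\fdim_R M=\sup\{i\mid\Tor^R_i(k,M)\ne 0\}$ to cover the general case---but that fact already gives $(a)\Leftrightarrow(b)$ outright for \emph{any} module, so the dual detour is redundant. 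This formula is exactly Avramov--Foxby~5.3.F, which is what the paper cites; the vague alternative of ``inducting on the codimension'' does not apply here and should be removed.
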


\begin{proof}
Any complete intersection ring is Gorenstein, and when such a ring has finite Krull dimension---for example, if it is a quotient of $P$---a module has finite flat dimension if and only if it has finite injective dimension; see \cite[(3.3.4)]{Christensen:2000a}.  Given this observation and the  hypothesis that $M$ is $\fm$-torsion, the equivalence of the stated conditions follows from \cite[Propositions 5.3.F, 5.3.I]{Avramov/Foxby:1991a}.
\end{proof}

For what follows we need to consider support sets defined over
extension fields.

\begin{definition}\label{de:extending-fields}
Let $K$ be an extension field of $k$. We set $P_K\colonequals \pos[K]{\bsx}$ and consider $P$ as a subring of $P_K$ in the obvious way. For any quotient ring $A$ of $P$, set $A_K\colonequals P_K\otimes_PA$; this is then a quotient ring of $P_K$. It is easy to verify that the extension $P\to P_K$ is flat, and hence so is the extension $A\to A_K$. 

In particular, $R_K$ is the quotient of $P_K$ by the ideal generated
by $f_1,\dots,f_c$, viewed as elements of $P_K$.  Hence $R_K$ is a
complete intersection of codimension $c$ in $P_K$, with maximal ideal
$\fm R_K$.  For an $R$-module $M$ set
\[
M_K \colonequals R_K\otimes_R M \qquad\text{and}\qquad M^K\colonequals \Hom_{R}(R_K,M)
\]
viewed as $R_K$-modules.
\end{definition}

\begin{remark}\label{re:extending-fields}
  The modules $M_K$ and $M^K$ have finite Loewy length provided that
  $M$ has finite Loewy length. This is clear since $\fm^s M=0$ implies
  $\fm^s M_K=0 = \fm^s M^K$.
\end{remark}

\begin{lemma}
\label{le:ext-tor}
When $M$ is an $R$-module of finite Loewy length, for each $i$ one has 
\begin{align*}
\Tor^{R_K}_i(K,M_K) &\cong K\otimes_k \Tor^R_i(k,M) \\
\Ext_{R_K}^i(K,M^K) &\cong \Hom_k(K,\Ext_R^i(k,M))\,.
\end{align*}
\end{lemma}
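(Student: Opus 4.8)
The plan is to prove the two isomorphisms separately. The common ingredients are that $R\to R_K$ is flat (recorded in \intref{Definition}{de:extending-fields}), that $R_K\otimes_Rk\cong K$ — indeed $R_K\otimes_Rk=R_K/\fm R_K=\pos[K]{\bsx}/(\bsx)=K$ since $(\bsf)\subseteq(\bsx)^2$ — and that, $M$ having finite Loewy length, $\fm^s M=0$ for some $s$, so $\fm^s$ also annihilates $F\otimes_RM$ and $\Hom_R(F,M)$ for every free $R$-module $F$.

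For the $\Tor$ statement I would invoke flat base change. Fix a free resolution $G_\bullet\to k$ over $R$; flatness makes $R_K\otimes_RG_\bullet\to R_K\otimes_Rk=K$ a free resolution over $R_K$, so
\[
\Tor^{R_K}_i(K,M_K)=H_i\big((R_K\otimes_RG_\bullet)\otimes_{R_K}M_K\big)\cong H_i\big(R_K\otimes_R(G_\bullet\otimes_RM)\big)\cong R_K\otimes_R\Tor^R_i(k,M),
\]
the last isomorphism again by flatness of $R_K$ over $R$. Now $\fm$ annihilates $k$, hence also $\Tor^R_i(k,M)$, which is therefore a $k$-vector space $V$; and for any such $V$ one has $R_K\otimes_RV\cong(R_K\otimes_Rk)\otimes_kV\cong K\otimes_kV$. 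This gives the first formula, and in fact uses nothing about $M$ beyond $M_K=R_K\otimes_RM$.

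For the $\Ext$ statement I would take a free resolution $F_\bullet\to k$ over $R$ and, as above, view $R_K\otimes_RF_\bullet\to K$ as a free resolution over $R_K$, so $\Ext^i_{R_K}(K,M^K)=H^i\Hom_{R_K}(R_K\otimes_RF_\bullet,M^K)$. Since $M^K=\Hom_R(R_K,M)$ is the coinduction of $M$ along $R\to R_K$ — that is, $\Hom_R(R_K,-)$ is right adjoint to restriction of scalars — and by the tensor--hom adjunction over $R$, there are natural isomorphisms
\[
\Hom_{R_K}(R_K\otimes_RF_i,\Hom_R(R_K,M))\ \cong\ \Hom_R(R_K\otimes_RF_i,M)\ \cong\ \Hom_R\big(R_K,\Hom_R(F_i,M)\big).
\]
With $C^\bullet\colonequals\Hom_R(F_\bullet,M)$, so that $H^i(C^\bullet)=\Ext^i_R(k,M)$, these assemble into an isomorphism of complexes $\Hom_{R_K}(R_K\otimes_RF_\bullet,M^K)\cong\Hom_R(R_K,C^\bullet)$. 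Each $C^i$ is a product of copies of $M$, hence killed by $\fm^s$. The crux is a natural isomorphism $\Hom_R(R_K,N)\cong\Hom_k(K,N)$ for every $R$-module $N$ with $\fm^sN=0$; granting it, $\Hom_R(R_K,C^\bullet)\cong\Hom_k(K,C^\bullet)$, and since $K$ is a free $k$-module the functor $\Hom_k(K,-)$ is exact, so $H^i\Hom_k(K,C^\bullet)\cong\Hom_k(K,H^i(C^\bullet))=\Hom_k(K,\Ext^i_R(k,M))$, as desired.

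The step I expect to require real care is precisely this last isomorphism $\Hom_R(R_K,N)\cong\Hom_k(K,N)$ for $\fm^s$-torsion $N$. Any $R$-linear map $R_K\to N$ factors through $R_K/\fm^sR_K$, so the point is to identify $R_K/\fm^sR_K$ with $K\otimes_k(R/\fm^s)$ as $(R/\fm^s)$-modules; then the extension-of-scalars adjunction along $k\to R/\fm^s$ gives $\Hom_{R/\fm^s}(K\otimes_k(R/\fm^s),N)\cong\Hom_k(K,N)$, naturally in $N$. I would prove $R_K/\fm^sR_K\cong K\otimes_k(R/\fm^s)$ by truncation: because $(\bsf)\subseteq(\bsx)^2$ one has $(\bsf)+(\bsx)^s=(\bar\bsf)+(\bsx)^s$ with $\bar\bsf$ the degree $<s$ polynomial truncations of $\bsf$, so $R/\fm^s=\pos{\bsx}/((\bsf)+(\bsx)^s)$ is a quotient of the polynomial ring $k[\bsx]$ by an ideal containing $(\bsx)^s$, and $K\otimes_k-$ commutes with forming that quotient since $K\otimes_kk[\bsx]=K[\bsx]$. (Alternatively, $R/\fm^s$ is a commutative local Artinian — hence perfect — ring, so the flat $(R/\fm^s)$-module $R_K/\fm^sR_K$ is free.) Beyond this, the only thing to be careful about is that all the adjunction isomorphisms are natural in the resolution variable, so that they genuinely assemble into isomorphisms of complexes before one passes to (co)homology.
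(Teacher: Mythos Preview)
Your argument is correct and rests on the same key identification as the paper's: that $R_K/\fm^sR_K\cong K\otimes_k(R/\fm^s)$, equivalently that $A_K$ is free over $A\colonequals R/\fm^s$. The paper packages the Ext computation in the derived category: it first shows $M^K\cong\RHom_R(R_K,M)$ (using projectivity of $A_K$ over $A$), then applies the derived adjunction $\RHom_{R_K}(K,\RHom_R(R_K,M))\cong\RHom_R(K,M)$, and finally identifies $\Ext^i_R(K,M)\cong\Hom_k(K,\Ext^i_R(k,M))$. You instead stay at the chain level, establishing the natural isomorphism $\Hom_R(R_K,N)\cong\Hom_k(K,N)$ for $\fm^s$-torsion $N$ directly and then using exactness of $\Hom_k(K,-)$. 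Your route is more elementary (no derived functors) and makes the naturality explicit; the paper's is terser but assumes comfort with $\RHom$ manipulations. The Tor argument is identical in both.
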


\begin{proof}
We verify the isomorphism involving Ext; the first one is  a tad easier to verify for $R_K$ is flat over $R$. By hypothesis, there exists a positive integer $s$ such that $\fm^s M=0$. Set $A\colonequals R/\fm^s$; then $A_K \cong R_K/\fm^s R_K$. Observe that $A$ is finite dimensional over $k$; given this, it is easy to verify that  $A_K$ is projective as an $A$-module. This justifies the second isomorphism in $\dcat{R_K}$ below
\[
\RHom_R(R_K,M) \cong  \RHom_A(A_K,M)\cong \Hom_A(A_K,M)\,.
\]
The first one is adjunction. We conclude that $\Hom_R(R_K,M) \cong \RHom_R(R_K,M)$. This justifies the first isomorphism below
\begin{align*}
\Ext^i_{R_K}(K,M^K) 
	&\cong  \Ext^i_{R_K}(K,\RHom_R(R_K,M)) \\
	& \cong \Ext^i_R(K,M) \\
	& \cong \Hom_k(K,\Ext^i_R(k,M))\,.
\end{align*}
The second one is  standard adjunction and the last one is just standard.
\end{proof}

We also need the following remarks concerning cohomology operators; we focus on codimension two for this is all that is needed in the sequel.
See \cite{Avramov/Buchweitz:2000a} for details.

\begin{remark}
\label{re:codim2}
Let $P\to Q\to R$ be a factorisation of the  surjection  $P\to R$ such that the kernel of the map $Q\to R$ can be defined by a $Q$-regular sequence, say $g_1,g_2$.  Set $\fn \colonequals (\bsx) Q$; this is a maximal ideal of $Q$ lying over the maximal ideal $\fm$ of $R$. Set $J\colonequals (g_1,g_2)$. There is a natural embedding of $k$-vector spaces
\[
\Hom_k(J /{\fn J}, k) \hookrightarrow \Ext^2_R(k,k)\,.
\]
The residue classes of $g_1,g_2$ are a basis for the $k$-vector space $J/\fq J$; let $\chi_1,\chi_2$ denote the image in $\Ext^2_R(k,k)$ of the dual basis. These are the cohomology operators constructed by Gulliksen~\cite{Gulliksen:1974a} and Eisenbud~\cite{Eisenbud:1980a} associated to  $Q\to R$. They lie in the center of the $k$-algebra $\Ext_R(k,k)$. 

Fix a point $\bsb\colonequals (b_1,b_2)$ in $k^2\setminus \{0\}$ and
set $Q_{\bsb}\colonequals Q/(b_1g_1+b_2g_2)$. Then one has a
surjection $Q_{\bsb}\to R$, with kernel generated by
$J/(b_1g_1+b_2g_2)$; it is easy to check that this ideal can be generated  any element $c_1g_1+c_2g_2$ such that $(c_1,c_2)$ is not a scalar multiple of $(b_1,b_2)$; no such element is a zero divisor.  The surjection
\[
\frac J{\fn J} \twoheadrightarrow \frac J{(\fn J + b_1g_1+b_2g_2)}
\]
yields an embedding
\[
\Hom_k(\frac J{(\fn J + b_1g_1+b_2g_2)},k) \hookrightarrow \Hom_k(\frac J {\fn J}, k) \hookrightarrow \Ext^2_R(k,k)\,.
\]
 From this it follows that the cohomology operator corresponding to the surjection $Q_{\bsb}\to R$ is $b_2\chi_1 - b_1\chi_2$. 
 
For any $R$-module $M$, the standard change of rings spectral sequence associated to $Q_{\bsb}\to R$ yields an exact sequence 
\begin{equation}
\label{eq:chi-les}
\cdots \to \Ext^{i+1}_{Q_{\bsb}}(k,M) \to \Ext^{i}_{R}(k,M) \xra{b_2\chi_1 - b_1\chi_2} \Ext^{i+2}_{R}(k,M) \to \Ext^{i+2}_{Q_{\bsb}}(k,M)\to \cdots
\end{equation}
\end{remark}

 The result below extends part of \intref{Theorem}{th:Av}.
 
\begin{theorem}
\label{th:appendix}
Let $R$ be as above and $M$ an $R$-module of finite Loewy length. The following conditions are equivalent:
\begin{enumerate}[\quad\rm(1)]
\item
$\fdim_RM$ is finite;
\item
$\hysupp_{R_K}(M_K)=\varnothing$ for any field extension $K$ of $k$;
\item
$\hysupp_{R_K}(M^K)=\varnothing$ for any field extension $K$ of $k$.
\end{enumerate}
Moreover, in \emph{(2)} and \emph{(3)} it suffices to take for $K$ an algebraically closed extension of $k$
of transcendence degree at least $c-1$.
\end{theorem}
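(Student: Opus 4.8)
I would first prove $(1)\Rightarrow(2)$ and $(1)\Rightarrow(3)$. If $\fdim_RM<\infty$, then $\Ext^i_R(k,M)=0$ for $i\gg0$ by \intref{Lemma}{le:pd-test}, so \intref{Lemma}{le:ext-tor} together with the faithfulness of $K\otimes_k-$ and of $\Hom_k(K,-)$ on $k$-vector spaces gives $\Tor^{R_K}_i(K,M_K)=0=\Ext^i_{R_K}(K,M^K)$ for $i\gg0$; hence $M_K$ and $M^K$ have finite flat dimension over $R_K$, again by \intref{Lemma}{le:pd-test}. One then notes that every $\beta_{\bsa}\colon P_{\bsa}\to R$, hence every $\beta_{\bsa,K}$, is a complete intersection surjection of codimension $c-1$: since $\bsf$ is a regular sequence generating an ideal of height $c$ it is a minimal generating set of $(\bsf)$, so for $\bsa\ne0$ the element $h_{\bsa}$ extends to a minimal generating set of $(\bsf)$ — necessarily a regular sequence — which identifies $R$ with the quotient of the hypersurface $P_{\bsa}$ by a $P_{\bsa}$-regular sequence of length $c-1$. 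As restriction along a map of finite flat dimension preserves finite flat dimension, $\beta_{\bsa,K}^*(M_K)$ and $\beta_{\bsa,K}^*(M^K)$ have finite flat dimension for all $\bsa$, i.e. $\hysupp_{R_K}(M_K)=\hysupp_{R_K}(M^K)=\varnothing$.

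\textbf{Reductions for the converse.} For $(2)\Rightarrow(1)$ and $(3)\Rightarrow(1)$ I argue contrapositively, producing a point of $\hysupp_{R_K}(M_K)$ (respectively of $\hysupp_{R_K}(M^K)$) whenever $\fdim_RM=\infty$ and $K\supseteq k$ is algebraically closed of transcendence degree at least $c-1$. Replacing $k$ by such a $K$ is harmless: $R_K$ is again a complete intersection of codimension $c$ in $P_K$, the modules $M_K,M^K$ have finite Loewy length by \intref{Remark}{re:extending-fields}, and by \intref{Lemma}{le:ext-tor} both still have infinite flat dimension over $R_K$. So it suffices to prove: for $k$ algebraically closed and large enough (transcendence degree at least $c-1$, over a fixed base field, as will be needed below), $R$ a complete intersection of codimension $c$ in $P=\pos{\bsx}$, and $M$ of finite Loewy length with $\fdim_RM=\infty$, one has $\hysupp_R(M)\ne\varnothing$.

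\textbf{The induction.} I would induct on $c$. For $c=0$ the ring $R=P$ is regular and there is nothing to prove; for $c=1$ one has $P_{\bsa}=R$, $\beta_{\bsa}=\id$, so $\hysupp_R(M)\ne\varnothing\iff\fdim_RM=\infty$, and the transcendence-degree condition is vacuous. For the step, fix a factorisation $P\to Q\to R$ with $Q$ a complete intersection of codimension $c-2$ in $P$ and $R=Q/(g_1,g_2)$, and let $\chi',\chi''\in\Ext^2_R(k,k)$ be the two cohomology operators of \intref{Remark}{re:codim2} attached to $Q\to R$, sitting inside the polynomial ring on the full family of operators attached to $P\to R$. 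Put $E\colonequals\Ext^*_R(k,M)$, regarded as a graded module over that polynomial ring; by \intref{Lemma}{le:pd-test}, $\fdim_RM=\infty$ if and only if $E$ is unbounded. For $\bsb=[b_1,b_2]\in\bbP^1$ the ring $Q_{\bsb}=Q/(b_1g_1+b_2g_2)$ is a complete intersection of codimension $c-1$ in $P$, and \eqref{eq:chi-les} together with \intref{Lemma}{le:pd-test} says that $\fdim_{Q_{\bsb}}(\beta^*M)<\infty$ exactly when $b_2\chi'-b_1\chi''$ acts invertibly on $E$ in all sufficiently large degrees. Since every hypersurface between $P$ and some $Q_{\bsb}$ lies between $P$ and $R$, and conversely every hypersurface between $P$ and $R$ lies between $P$ and $Q_{\bsb}$ for a suitable $\bsb$, the set $\hysupp_R(M)$ is covered, under the evident reparametrisation, by the images of the $\hysupp_{Q_{\bsb}}(\beta^*M)$. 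By the inductive hypothesis (the theorem in codimension $c-1$, applied to the rings $Q_{\bsb}$ over the already-enlarged field, which is ample for codimension $c-1$), this covering is nonempty as soon as $\fdim_{Q_{\bsb}}(\beta^*M)=\infty$ for some $\bsb$ — equivalently, as soon as some linear form $b_2\chi'-b_1\chi''$ fails to be eventually invertible on $E$ (after a suitable base change). The proof is then completed by the observation that an unbounded $E$ always forces such a failure once enough transcendentals are present: the locus in $\Proj$ of the operator ring where $E$ is non-zero is a non-empty set all of whose points have residue fields of transcendence degree at most $c-1$, so after base change to an algebraically closed $K$ with transcendence degree at least $c-1$ this locus acquires a closed point, which — via the identification of $\hysupp$ with closed points of the cohomological support — is an honest element of $\hysupp_{R_K}(M_K)$. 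This is where the transcendence-degree bound in the statement originates.

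\textbf{The main obstacle.} The delicate point throughout is that $M$ need not be finitely generated, so $E=\Ext^*_R(k,M)$ need not be finitely generated over the operator ring; then ``$E$ unbounded'' is a priori stronger than ``$E$ has non-empty projective support'', since an unbounded graded module over a polynomial ring can be supported only at the irrelevant ideal, or only at non-closed points, and it is precisely this phenomenon that makes the conclusion fail over $k$ itself and forces the passage to a larger field. Taming $E$ is the role of the finite Loewy length hypothesis: writing $\fm^sM=0$ and using either a finite filtration of $M$ with trivial quotients, or the presentation of $M$ as a filtered colimit of finitely generated submodules — invoking that $\Ext^*_R(k,-)$ commutes with filtered colimits because $k$ is finitely presented over the noetherian ring $R$ — one reduces the structural statements about $E$, and in particular the identification of $\hysupp_{Q_{\bsb}}(\beta^*M)$ with the non-vanishing loci of the pieces of $\Ext^*_{Q_{\bsb}}(k,M)$ that drives the induction, to the finitely generated situation governed by \intref{Theorem}{th:Av}. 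Carrying out this reduction so that it is compatible with the field extensions of the second step and with the passage to the codimension-$(c-1)$ rings $Q_{\bsb}$ is the technical core of the argument.
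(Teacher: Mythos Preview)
Your skeleton matches the paper's: the easy direction is handled the same way, and the induction on $c$ via an intermediate codimension-$(c-1)$ ring $Q_{\bsb}$ together with the exact sequence~\eqref{eq:chi-les} is exactly the mechanism the paper uses. The divergence, and the gap, is at the step you yourself flag as the ``main obstacle''. You argue contrapositively and try to locate a point of $\hysupp$ by looking at the support of $E=\Ext^*_R(k,M)$ in $\Proj$ of the operator ring, then propose to tame the non-finite-generation of $E$ by reducing to the finitely generated case via a Loewy filtration or a filtered colimit, so as to invoke \intref{Theorem}{th:Av}. Neither reduction goes through. The Loewy filtration has subquotients $k^{(I_i)}$, but an extension of modules of infinite flat dimension can have finite flat dimension (already $0\to k\to R\to k\to 0$ over $k[x]/(x^2)$), so knowing $\hysupp_R(k^{(I_i)})=\bbP^{c-1}(k)$ for the subquotients tells you nothing about $\hysupp_R(M)$. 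The colimit route fails for the same reason: writing $M=\colim M_\lambda$ with each $M_\lambda$ finitely generated, the closed sets $\hysupp_R(M_\lambda)$ can drift with $\lambda$ and there is no mechanism forcing a common point to survive to $M_K$; support sets simply do not commute with filtered colimits.

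The paper avoids this entirely by running the argument in the direct direction rather than the contrapositive. Assuming (3), the induction hypothesis gives $\fdim_{Q_{\bsb}}(M^K)\le n$ for \emph{every} $\bsb\in K^2\setminus\{0\}$, so \eqref{eq:chi-les} shows that for each fixed $i\ge n$ the map
\[
b_2\chi_1-b_1\chi_2\colon \Hom_k(K,\Ext^i_R(k,M))\longrightarrow \Hom_k(K,\Ext^{i+2}_R(k,M))
\]
is an isomorphism for all $(b_1,b_2)\ne 0$. At this point the paper invokes a purely linear-algebraic lemma, \cite[Lemma~5.1]{Benson/Iyengar/Krause/Pevtsova:2017a} for the $\Hom$ case and \cite[Theorem~5.2]{Benson/Carlson/Rickard:1996a} for the $\otimes$ case, which says precisely that if two $k$-linear maps have the property that every nonzero $K$-combination is bijective (with $\operatorname{trdeg}_kK\ge 1$), then the source is zero. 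These lemmas hold for arbitrary $k$-vector spaces, with no finiteness hypothesis, and are what replaces the support-theoretic argument you were reaching for; they are also the source of the transcendence-degree bound, one transcendental being consumed at each of the $c-1$ induction steps. Reworking your induction in the direct direction and citing these lemmas closes the gap.
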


\begin{proof}
The proof combines ideas from \cite{Avramov:1989a} and \cite{Benson/Carlson/Rickard:1996a}.  \intref{Remark}{re:extending-fields} and \intref{Remark}{re:codim2} will be used implicitly in what follows.

\medskip

(1)$\Rightarrow$(2) For any $\bsa$ in $\bbP^{c-1}(k)$ the map $P_{\bsa}\to R$ is a complete intersection, of codimension $c-1$; in particular $\fdim_{P_{\bsa}}R$ is finite. Thus $\fdim_RM$ finite implies $\fdim_{P_{\bsa}}M$ is finite as well, and hence $\hysupp_R(M)=\varnothing$. It remains to observe that if $\fdim_RM$ is finite, then so is $\fdim_{R_K}M_K$; for example, by combining \intref{Lemma}{le:pd-test} and \intref{Lemma}{le:ext-tor}.

\medskip

(1)$\Rightarrow$(3)  This can be proved akin to the previous implication.

\medskip

(3)$\Rightarrow$(1) 
We argue by induction on $c$. The result is a tautology when $c=1$, for then $\hysupp_R(M)=\varnothing$ is equivalent to $\fdim_RM<\infty$, by definition of the support set. We can thus assume $c\ge 2$, and that the desired conclusion holds for complete intersections of codimension $c-1$.   Set $Q\colonequals P/(f_3,\dots,f_{c})$, so that one has a surjection $Q\to R$.

Since $R=Q/(f_1,f_2)$ and $f_1,f_2$ is a $Q$-regular sequence,  corresponding to the surjection $Q\to R$ one has a subspace $k\chi_1 + k\chi_2$ of $\Ext^2_R(k,k)$; see \intref{Remark}{re:codim2}.  Fix a field extension $K$ of $k$, and elements $b_1,b_2$ in $K^2$, not both zero. Set $Q_{\bsb}\colonequals Q_K/(b_1f_{1}+b_2f_{2})$. The map $Q_K\to R_K$ factors as $Q_K\to Q_{\bsb} \to R_K$.

We wish to apply the induction hypothesis to $Q_{\bsb}$. This ring is the quotient of $P_K$ by the ideal generated by $b_1f_1+b_2f_2,f_3,\dots, f_c$, and the latter is a regular sequence in $P_K$.  Moreover for any field extension $L$ of $K$, any $L$-linear combination of this sequence is an $L$-linear combination of $f_1,\dots,f_c$. Keeping in mind that $\Hom_{R_K}(R_L,M^K)\cong M^L$ as $R_L$-modules, it is now a routine verification that 
\[
\hysupp_{(Q_{\bsb})_L}(M^L)= \varnothing
\]
as a subset of $L^{c-1}$. The upshot is that $\fdim_{Q_{\bsb}}(M^K)$ is finite, by the induction hypothesis. Since the Krull dimension of $Q_{\bsb}$ is at most $n$, it follows that 
\[
\fdim_{Q_{\bsb}}M^K\le n\,.
\]
Fix $i\ge n$ and consider the following snippet 
\[
\Ext^{i+1}_{Q_{\bsb}}(K,M^K) \to \Ext^{i}_{R_K}(K,M^K) \xra{\ b_2\chi_1 -  b_1\chi_2\ } \Ext^{i+2}_{R_K}(K,M^K) \to \Ext^{i+2}_{Q_{\bsb}}(K,M^K)
\]
of the exact sequence from \eqref{eq:chi-les} associated to $Q_{\bsb}\to R_K$. Thus the choice of $i$, the exact sequence above, and  \intref{Lemma}{le:ext-tor} imply that one has an isomorphism
\[
\Hom_k(K, \Ext^{i}_R(k,M)) \xra[\cong]{\ b_2\chi_1 - b_1\chi_2\ } \Hom_k(K, \Ext^{i+2}_R(k,M))\,.
\]
Since $(b_1,b_2)$ in $K^2\setminus \{0\}$ was arbitrary,  \cite[Lemma~5.1]{\bikp:2017a} implies $\Ext^{i}_R(k,M)=0$. Since this holds for each $i\ge n$ we deduce that $\fdim_RM\le i$, by \intref{Lemma}{le:pd-test}.

\medskip

(2)$\Rightarrow$(1) can be proved along the lines of (3)$\Rightarrow$(1). Instead of  \cite[Lemma~5.1]{Benson/Iyengar/Krause/Pevtsova:2017a}, one applies its analogue for tensor products \cite[Theorem~5.2]{Benson/Carlson/Rickard:1996a}. These results also make it clear that it suffices to consider algebraic closed extension fields $K$ of transcendence degree $c-1$.
\end{proof}

\end{document}